\newtheorem{lemma}{Lemma}[section]
\newtheorem{proposition}[lemma]{Proposition}
\newtheorem{theorem}[lemma]{Theorem}
\newtheorem{corollary}[lemma]{Corollary}
\newtheorem*{theoremI}{Theorem A}
\newtheorem*{theoremII}{Theorem B}
\theoremstyle{definition}
\newtheorem{definition}[lemma]{Definition}
\newtheorem{remark}[lemma]{Remark}
\newcommand{\mbb}[1]{\mathbb{#1}}
\newcommand{\mbf}[1]{\mathbf{#1}}
\newcommand{\C}{\mathbb{C}}
\newcommand{\ot}{\otimes}
\newcommand{\Hom}{\mathrm{Hom}}
\newcommand{\Ext}{\mathrm{Ext}}
\newcommand{\Aut}{\mathrm{Aut}}
\newcommand{\rep}{\mathrm{rep}}
\newcommand{\bt}{\text{\tiny$\bullet$}}
\newcommand{\h}{\mathfrak{h}}
\newcommand{\g}{\mathfrak{g}}
\newcommand{\n}{\mathfrak{n}}
\newcommand{\U}{\mathscr{U}}
\renewcommand{\b}{\mathfrak{b}}
\renewcommand{\hat}{\widehat}
\renewcommand{\O}{\mathscr{O}}
\newcommand{\qbinom}[2]{\left[\begin{array}{c} #1\\ #2\end{array}\right]}
\title[]{Twists of quantum Borel algebras}
\date{\today}
\author{Cris Negron}
\thanks{This work was supported by NSF Postdoctoral Research Fellowship DMS-1503147}
\email{negronc@mit.edu}
\address{Department of Mathematics, Massachusetts Institute of Technology, Cambridge, MA 02139}
\begin{document}
\maketitle

\begin{abstract}
We classify Drinfeld twists for the quantum Borel subalgebra $u_q(\b)$ in the Frobenius-Lusztig kernel $u_q(\g)$, where $\g$ is a simple Lie algebra over $\C$ and $q$ an odd root of unity.  More specifically, we show that alternating forms on the character group of the group of grouplikes for $u_q(\b)$ generate all twists for $u_q(\b)$, under a certain algebraic group action.  This implies a simple classification of Hopf algebras whose categories of representations are tensor equivalent to that of $u_q(\b)$.  We also show that cocycle twists for the corresponding De Concini-Kac algebra are in bijection with alternating forms on the aforementioned character group.
\end{abstract}

\section{Introduction}

In this paper we classify Drinfeld twists for the quantum Borel subalgebra $u_q(\b)$ in the Frobenius-Lusztig kernel $u_q(\g)$, for a simple Lie algebra $\g$ over $\C$ at an odd root of unity $q$ (see Section~\ref{sect:uq}).  The algebra $u_q(\g)$ is also known as the {\it small quantum group}.
\par

Recall that Drinfeld twists for a given Hopf algebra $H$ correspond to tensor structures on the forgetful functor from $\rep(H)$ to $Vect$.  Here $\rep(H)$ denotes the tensor category of finite dimensional $H$-modules.  Two twists are said to be gauge equivalent if their corresponding functors are naturally isomorphic.  We let $\mathrm{Tw}(H)$ denote the set of gauge equivalence classes of twists for $H$.  There is a group of {\it twisted autoequivalences} of $H$ which acts on $\mathrm{Tw}(H)$, and the resulting quotient parametrizes isomorphism classes of Hopf algebras $K$ which admit a tensor equivalence $\rep(K)\overset{\sim}\to \rep(H)$.  (See Sections~\ref{sect:twists} and~\ref{sect:twAuts}.)
\par

For the small quantum Borel, it is known that the unipotent algebraic group $\mathbb{U}$, corresponding to the nilpotent subalgebra $\n=[\b,\b]\subset\b$, acts on the collection of twists for $u_q(\b)$ by way of twisted automorphisms~\cite{arkhipovgaitsgory03,uqJ}.  Basic considerations also establish an embedding $\mathrm{Alt}(G^\vee)\to \mathrm{Tw}(u_q(\b))$, where $\mathrm{Alt}(G^\vee)$ denotes the set of alternating bilinear forms on the character group $G^\vee$ of the Cartan subgroup $G=G(u_q(\b))$.  We show below that the set of alternating forms on $G^\vee$ generates {\it all} twists for $u_q(\b)$ under the aforementioned action of $\mathbb{U}$.

\begin{theoremI}[\ref{thm:unip}]
There is an equality $\mathrm{Tw}\left(u_q(\b)\right)=\mathbb{U}\cdot \mathrm{Alt}(G^\vee)$.
\end{theoremI}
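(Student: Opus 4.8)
The plan is to reduce an arbitrary twist, one graded piece at a time, to one supported on the group algebra $\C[G]$, exploiting the bosonization $u_q(\b)\cong u_q(\n)\#\C[G]$ and the nonnegative grading in which $\C[G]$ lies in degree zero and each skew-primitive generator $E_i$ in degree one. The embedding $\mathrm{Alt}(G^\vee)\hookrightarrow\mathrm{Tw}(u_q(\b))$ is realized by viewing a twist of the Hopf subalgebra $\C[G]$ — equivalently, a class in $H^2(G^\vee,\C^\times)$ — as a twist of $u_q(\b)$; so Theorem~A asserts that every gauge class of twists meets the $\mathbb{U}$-orbit of a twist of this special shape. The two ingredients we are handed for free — the $\mathbb{U}$-action by twisted automorphisms and the subgroup $\mathrm{Alt}(G^\vee)$ — must between them cover all of $\mathrm{Tw}(u_q(\b))$, and the proof should make visible exactly how these two pieces fit together.

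First I would dispose of the degree-zero part. Given a twist $J\in u_q(\b)^{\ot 2}$, the Hopf projection $\pi\colon u_q(\b)\twoheadrightarrow\C[G]$ annihilating all positive-degree elements yields a twist $(\pi\ot\pi)(J)$ of $\C[G]$, gauge equivalent to a unique $\beta\in\mathrm{Alt}(G^\vee)=\mathrm{Tw}(\C[G])$. Lifting the corresponding gauge transformation along the section $\C[G]\hookrightarrow u_q(\b)$ and applying it to $J$, we may assume $(\pi\ot\pi)(J)=\beta$. Twisting $u_q(\b)$ by $\beta$ — an operation that leaves the algebra untouched and again produces a pointed Hopf algebra of the same form (a multiparameter small quantum Borel), to which the subsequent argument applies verbatim — and passing to the resulting twist of the twisted algebra, we reduce to $(\pi\ot\pi)(J)=1\ot 1$, i.e. $J\in 1\ot 1+\bigoplus_{d>0}(u_q(\b)^{\ot 2})_d$. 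It then suffices to show that any such $J$ is gauge equivalent to the twist obtained from $1\ot 1$ by the twisted-automorphism action of some $u\in\mathbb{U}$.

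The core is a deformation-theoretic induction on the grading. Suppose $J$ has been arranged to agree, up to degree $d$, with the twist $J_u$ attached to some $u\in\mathbb{U}$; the degree-$(d+1)$ discrepancy $j_{d+1}$ is then a $2$-cocycle in the cochain complex governing twist deformations of the coalgebra $u_q(\b)$. Computing the relevant $H^2$ through the bosonization, the $\C[G]$-summand reproduces $\mathrm{Alt}(G^\vee)$ — already accounted for — while the remaining directions should be spanned by the image of $\n=\mathrm{Lie}(\mathbb{U})$ under the infinitesimal twisted-automorphism action, modulo coboundaries. Thus $j_{d+1}$ can be cancelled by simultaneously perturbing $u$ and performing a gauge transformation, completing the inductive step; since $u_q(\b)$ is finite-dimensional there are no convergence issues, and the solution so produced is an honest twist. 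Equivalently one may phrase the whole argument as smoothness and essential surjectivity for the orbit map $\mathbb{U}\times\mathrm{Alt}(G^\vee)\to\mathrm{Tw}(u_q(\b))$.

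The main obstacle is precisely the cohomological input to the inductive step: identifying $H^2$ of the twist-deformation complex of $u_q(\b)$ and showing that its only classes beyond $\mathrm{Alt}(G^\vee)$ are the ones hit by the $\mathbb{U}$-action. This is delicate at a root of unity, where the truncation relations $E_i^\ell=0$ generate additional cohomology — a "Frobenius direction" absent in the generic case — and these classes must be controlled, e.g. by comparison with the De Concini--Kac form $\U_q(\b)$, whose central Frobenius Hopf subalgebra is the geometric origin of the $\mathbb{U}$-action and whose own cocycle twists are in turn pinned down to $\mathrm{Alt}(G^\vee)$. A secondary subtlety is the bookkeeping of how gauge transformations interact with the $\mathbb{U}$-action, so that the two operations together suffice to kill $j_{d+1}$ at every stage.
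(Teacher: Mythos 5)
Your plan reproduces the paper's argument in its essentials: normalize the degree-zero part to an alternating form via Lemma~\ref{lem:TwG}, pass to the $B$-twisted algebra, then inductively cancel higher $Q$-degree terms by interpreting the lowest surviving term as a $2$-cocycle for $(u_q^\ast)_B$, using the computation $H^2((u_q^\ast)_B)\cong\n^{(l)}$ and the vanishing of $H^{1,2}(\U^{DK}_B)$ to show the only non-bounding classes live in degrees $l\Phi^+$ and are exactly the ones produced (hence cancellable) by the $\mathbb{U}$-action via the divided-power exponentials $\mathbf{v}_{l\mu}(\lambda)$, with the finite-dimensionality of $u_q(\b)$ terminating the induction. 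Even the bookkeeping worry you flag at the end is resolved in the paper precisely as you anticipate, by observing that the ``small'' gauge group is normal in the full group generated together with the $\exp(\lambda E_\alpha^{(l)})$, so the accumulated gauge element factors as $v^{-1}v'$ with $v\in\mathbb{V}_{dp}$ and $v'$ an honest gauge transformation in $u_q$.
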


We also show in Proposition~\ref{prop:triv1} that the De Concini-Kac algebra $U^{DK}_q(\b)$ admits no {\it cocycle} twists up to gauge equivalence, save for those coming from the group of grouplikes.  A version of Theorem A can also be shown to hold for quantum Kac-Moody algebras.  In this case one should complete $u_q(\b)$ relative to its grading by the root lattice.
\par

As a consequence of Theorem A we derive the following classification result for Hopf algebras which are tensor equivalent to a small quantum Borel algebra.  The classification can alternately be obtained from works of Andruskiewitsch, Angiono, Iglesias, and Schneider~\cite{ASIII,aai17,angionoiglesias}, whom we refer to collectively as AAIS.

\begin{theoremII}[\ref{thm:equiv}, AAIS]
Suppose $H$ is a Hopf algebra such that $\rep(H)$ is equivalent to $\rep(u_q(\b))$ as a tensor category.  Then there is an alternating form $B$ on $G^\vee$ so that $H\cong u_q(\b)^B$ as Hopf algebras.  When $\mathrm{ord}(q)$ is coprime to the determinant of the Cartan matrix for $\g$, the form $B$ is uniquely determined by $H$.
\end{theoremII}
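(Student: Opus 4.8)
The plan is to derive Theorem~\ref{thm:equiv} from Theorem~\ref{thm:unip} using the general correspondence, recalled in Sections~\ref{sect:twists} and~\ref{sect:twAuts}, between Drinfeld twists of a Hopf algebra $K$ and Hopf algebras whose representation category is tensor equivalent to $\rep(K)$. For \emph{existence}: if $\rep(H)$ is tensor equivalent to $\rep(u_q(\b))$, then composing the equivalence with the forgetful functor yields a fiber functor on $\rep(H)$, and the reconstruction procedure of Section~\ref{sect:twAuts} produces a Drinfeld twist $J$ for $u_q(\b)$ together with a Hopf isomorphism $H\cong u_q(\b)^{J}$; the gauge class $[J]\in\mathrm{Tw}(u_q(\b))$ is determined by $H$ up to the action of the group of twisted autoequivalences. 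By Theorem~\ref{thm:unip} we may write $[J]=[\sigma\cdot B]$ with $\sigma\in\mathbb{U}$ and $B\in\mathrm{Alt}(G^\vee)$. Since $\mathbb{U}$ acts on $\mathrm{Tw}(u_q(\b))$ through twisted automorphisms of $u_q(\b)$, the twisted Hopf algebras $u_q(\b)^{\sigma\cdot B}$ and $u_q(\b)^{B}$ are isomorphic, and hence $H\cong u_q(\b)^{B}$. This step is essentially formal once Theorem~\ref{thm:unip} is in hand.

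For \emph{uniqueness}, suppose $\mathrm{ord}(q)$ is coprime to $\det C$, where $C$ is the Cartan matrix of $\g$, and that $u_q(\b)^{B}\cong u_q(\b)^{B'}$ as Hopf algebras; we want $B=B'$. The idea is to recover $B$ from the Hopf algebra $u_q(\b)^{B}$ as an intrinsic invariant. Forms in $\mathrm{Alt}(G^\vee)$ give twists supported on the commutative, cocommutative Hopf subalgebra $\C[G]\subset u_q(\b)$, so twisting by $B$ leaves $\C[G]$, which is the (grouplike) coradical, untouched; an isomorphism $u_q(\b)^{B}\cong u_q(\b)^{B'}$ therefore restricts to an isomorphism of coradicals and induces an automorphism of $G$, hence of $G^\vee$. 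Passing to the associated graded of the coradical filtration, the isomorphism identifies the Yetter--Drinfeld modules $V$ of skew-primitives over $\C[G]$, and in particular their braiding matrices $(q_{ij})$. Twisting by $B$ alters the braiding on $V$ only in its antisymmetric part; the symmetric part $q_{ij}q_{ji}$ is determined by the Cartan datum of $\g$ and is fixed by the equivalence, so the comparison of braidings reads off the values of $B$ on the simple roots. Finally, coprimality of $\mathrm{ord}(q)$ with $\det C$ is exactly what makes the natural $q$-pairing between $G$ and the weight lattice of $V$ nondegenerate, so that these values pin $B$ down as a bicharacter on $G^\vee$; assembling the identifications gives $B=B'$.

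\emph{Main obstacle.} The delicate part is the uniqueness assertion, and specifically the bookkeeping of which automorphisms may act on the data recovered above. An isomorphism $u_q(\b)^{B}\cong u_q(\b)^{B'}$ need not fix the coradical, and one must check that the induced automorphisms of $G$ — together with rescalings of the generators $E_i$ and the unipotent twisted automorphisms from $\mathbb{U}$ — act trivially on $\mathrm{Alt}(G^\vee)$ under the coprimality hypothesis; equivalently, that the group of twisted autoequivalences acts trivially on the image of $\mathrm{Alt}(G^\vee)$ inside $\mathrm{Tw}(u_q(\b))$. This is where $\det C$, rather than merely $\mathrm{ord}(q)$, enters, and the precise formulation is what is established in the body of Theorem~\ref{thm:equiv}. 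Existence, by contrast, is a clean consequence of Theorem~\ref{thm:unip} and requires no additional input, such as Proposition~\ref{prop:triv1}.
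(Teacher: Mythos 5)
Your existence argument is correct and essentially matches the paper's: reconstruction (via Ng--Schauenburg) gives $H\cong u_q(\b)^J$ for some twist $J$; Theorem~\ref{thm:twists} (or equivalently Theorem~\ref{thm:unip}) lets you write $[J]=[v\cdot B]$ with $v\in\mathbb{V}_{dp}$ (resp.\ $\sigma\in\mathbb{U}$) and $B\in\mathrm{Alt}(G^\vee)$; and since $\mathrm{Ad}_v$ (resp.\ the twisted automorphism attached to $\sigma$) is a Hopf isomorphism $u_q(\b)^{v\cdot J}\to u_q(\b)^J$, and gauge-equivalent twists produce isomorphic Hopf algebras, one gets $H\cong u_q(\b)^B$. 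You are also right that Proposition~\ref{prop:triv1} is not needed here.

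For uniqueness, however, there is a genuine gap, and you honestly flag it yourself in the ``Main obstacle'' paragraph. Your reformulation --- that the twisted autoequivalence group acts trivially on $\mathrm{Alt}(G^\vee)\subset\mathrm{Tw}(u_q(\b))$ --- is a correct restatement of what must be shown, but it is not an argument. In particular, you do not control the induced permutation of the Dynkin diagram or the rescalings of generators, and you do not explain how the symmetric-part-of-braiding idea survives that ambiguity. The paper does not argue at the level of ``an intrinsic invariant'' in the abstract; it simply dualizes a Hopf isomorphism $\psi:u_q(\b)^{B'}\to u_q(\b)^B$ to $\phi:u_q^\ast\to(u_q^\ast)_B$ (after reducing to the case $B'=1$), observes that $\phi$ must send skew-primitives to skew-primitives, $\phi(X_\alpha)=c_\alpha X_{\phi'(\alpha)}$ for some set-bijection $\phi'$ of $\Gamma$ and scalars $c_\alpha$, and then directly computes the twisted commutator relation to obtain
\[
B^2\bigl(\phi(\alpha),\phi(\beta)\bigr)=q^{(\alpha,\beta)-(\phi'(\alpha),\phi'(\beta))}.
\]
The right-hand side is symmetric in $\alpha,\beta$ \emph{for any} bijection $\phi'$, so $B^2$ is symmetric on the image of $G$ under the Killing map; since $l$ is odd one may extract a square root, and a form that is both symmetric and alternating is trivial, giving $B|_{\kappa(G)}=1$. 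Coprimality of $l$ with $\det C$ then enters only at the last step, to ensure $\kappa:G\to G^\vee$ is an isomorphism, hence $B=1$. This calculation is exactly the point your proposal leaves unaddressed: it is not that the diagram automorphism and rescalings act trivially (they need not), but that the symmetry of the Killing form makes the argument insensitive to them.

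A secondary imprecision: you say twisting by $B$ ``alters the braiding on $V$ only in its antisymmetric part.'' That is true because $B$ is alternating, so $B(a,b)B(b,a)=1$ and hence $q^B_{ij}q^B_{ji}=q_{ij}q_{ji}$. This is the right heuristic behind the paper's computation, but the heuristic alone does not close the argument without the explicit identity above and the final symmetric-implies-trivial step.
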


Here $u_q(\b)^B$ denotes the Hopf algebra twisted by $B$ (see Section~\ref{sect:twists}).  We note that Theorem B is well-known to hold in the case $\g=\mathfrak{sl}_2(\C)$.  In this case the quantum Borel is a Taft algebra $T_q$, there are no nontrivial alternating forms on the group of grouplikes, and hence any Drinfeld twist $T_q^J$ will be Hopf isomorphic to $T_q$ (see also \cite{mombelli10,etingofostrik04}).
\par

Let us say a few words regarding the cited works~\cite{ASIII,aai17,angionoiglesias}, which concern {\it liftings} of pointed, coradically graded, Hopf algebras (among other things).  A solution to the lifting problem, in our particular case, implies a direct classification all cocycle twisted algebras $(u_q(\b)^\ast)_J$, up to Hopf isomorphism~\cite{grunenfeldermastnak,angionoiglesias}.  The approaches taken in~\cite{ASIII,aai17,angionoiglesias} involve an analysis of certain Hopf deformations of a given cordially graded algebra, rather than a classification of the twists $J$ themselves.  Hence the work of AAIS offers an alternate means of arriving at Theorem B.
\par

The present work is motivated primarily by a conjecture regarding Drinfeld twists for the small quantum group $u_q(\g)$.  Recall that $u_q(\g)$ is generated by the positive and negative Borel subalgebras $u_q(\b_{\pm})\subset u_q(\g)$.  We ask in~\cite{den,uqJ} if there is an equality
\begin{equation}\label{eq:99}
\mathrm{Tw}(u_q(\g))=\mathbb{G}\cdot \mathrm{BD}(u_q(\g)),
\end{equation}
where $\mathrm{BD}(u_q(\g))$ denotes the collection of {\it Belavin-Drinfeld twists} for $u_q(\g)$, and $\mathbb{G}$ is the simply-connected semisimple algebraic group with Lie algebra $\g$.  As with $\mathbb{U}$ above, $\mathbb{G}$ acts by twisted automorphisms on $\mathrm{Tw}(u_q(\g))$.
\par

Belavin-Drinfeld twists for $u_q(\g)$ can be described vaguely as follows: Each twist in $\mathrm{BD}(u_q(\g))$ is specified by a choice of combinatorial data on the Dynkin diagram for $\g$.  This data is called a {\it Belavin-Drinfeld triple}~\cite{belavindrinfeld}.  Such a choice of data specifies an interaction between the positive and negative roots for $\g$, and hence an interaction between $u_q(\b_+)$ and $u_q(\b_-)$ in $u_q(\g)$.  We use the prescribed interaction between the positive and negative Borels, along with the $R$-matrix for $u_q(\g)$, to produce an especially non-trivial twist for $u_q(\g)$.  For example, one can drastically change the group of grouplikes after applying such a twist to $u_q(\g)$ (see~\cite{uqJ}), and in an extreme {\it dynamical} case one can make $u_q(\g)$ self-dual after twisting (see~\cite{etingofnikshych01}).  
\par

Hence, if we consider only the positive or negative Borel $u_q(\b)$ in $u_q(\g)$, we expect that there should be no such interesting twists.  Specifically, we expect from the conjectural equality~\eqref{eq:99} that all twists for $u_q(\b)$ should be induced by twists on the group of grouplikes, up to gauge equivalence and Hopf isomorphism.  Theorems A and B verify that this is in fact the case.

\subsection*{A description of the proof of Theorem A}

As our general methods may be of interest to readers, we describe here the proof of Theorem A.
\par

Take $u_q=u_q(\b)$.  Below we make heavy use of a natural grading on $u_q$ by the root lattice for $\g$, which we denote by $Q$.  Let $Q^+$ denote the $\mathbb{Z}_{\geq 0}$-span of the positive roots, minus $0$.  Given a twist $J$ for $u_q$ we can write $J=B+\sum_{\mu\in Q^+}J_\mu$, where each $J_\mu$ of $Q$-degree $\mu$ and $B$ a twist for $\C[G]$.  In general, one would like to eliminate the terms $J_\mu$ in some orderly fashion to produce an equivalence between $J$ and the corresponding degree $0$ term $B$.
\par

We show that the nonvanishing terms $J_\mu$ appearing in $J$ specify degree $2$ cohomology classes $[J_\mu]\in H^2((u_q^\ast)_B)$, where $(u^\ast_q)_B$ is the cocycle twist of the dual $u^\ast_q$ by the degree $0$ term $B$.  Furthermore, the term $J_\mu$ can be eliminated via a gauge transformation in $u_q$ if and only if the corresponding cohomology class $[J_\mu]$ vanishes (see Section~\ref{sect:506}).
\par

As a consequence of a natural identification between $H^2((u_q^\ast)_B)$ and the nilpotent subalgebra $\n$ in $\b$ (Proposition~\ref{prop:HuB}), we find that there are only finitely many problem terms $J_\gamma$ in $J$ which {\it cannot} be eliminated via gauge transformation in $u_q$, up to scaling.  Namely, there is one such class for each positive root of $\g$.
\par

One shows that these problem terms {\it can} in fact be eliminated via gauge transformation by a distinguished collection of elements $\{\mbf{v}_\gamma(\lambda)\}_{\gamma\in \Phi^+,\lambda\in \mbb{C}}$ in a completion $\hat{U}_q(\b)$ of Lusztig's big quantum Borel algebra.  One shows further that the collective gauge actions of the $\mathbf{v}_\gamma(\lambda)$ reproduce a known action of $\mathbb{U}$ on $\mathrm{Tw}(u_q)$ (Section~\ref{sect:twAuts},~\cite{arkhipovgaitsgory03}).  It follows that any twist $J$ for $u_q$ can be reduced to the corresponding grouplike term $B$ via gauging in $u_q$ and the action of $\mbb{U}$.
\par

A formalization of the above information, and reference to a known equality $\mathrm{Tw}(\C[G])=\mathrm{Alt}(G^\vee)$~\cite{gelaki02,guillotkassel09}, produces the identification $\mathrm{Tw}(u_q)=\mathbb{U}\cdot \mathrm{Alt}(G^\vee)$.

\subsection*{Outline}
Sections~\ref{sect:uq} and~\ref{sect:twists} provide background information on quantum Borel algebras and twists.  Section~\ref{sect:Uq} provides a duality between Lusztig's divided power algebra, and the corresponding De Concini-Kac algebra.  Section~\ref{sect:drinfH} establishes an essential relationship between twists for quantum Borel algebras and cohomology classes for the dual algebra.  Section~\ref{sect:cohom} is dedicated to a presentation of the cohomologies of the relevant dual algebras.  In Section~\ref{sect:classTw} we prove a primordial version of Theorems A and B.  In Section~\ref{sect:proofs} we recall some information from~\cite{arkhipovgaitsgory03,uqJ}, and prove Theorems A and B.  In Section~\ref{sect:UDK} we provide proofs of some technical results from Section~\ref{sect:cohom}.

\subsection*{Acknowledgements}

Thanks to Pavel Etingof and Dmitri Nikshych for helpful conversations.  Thanks also to Agust\'{i}n Garc\'{i}a for providing some information regarding lifting problems for pointed Hopf algebras.

\section{Quantum Borel algebras}
\label{sect:uq}

Given any simple Lie algebra $\g$ we will consider a root of unity $q$ of a fixed odd order $l$.  In type $G_2$ we also require $l$ to be coprime to $3$, in accordance with~\cite{lusztig90II}.  In specific types we make the following additional assumptions on the order of $q$:
\begin{itemize}
\item In the simply laced case $l\neq 3$.
\item In types $B_n$, $C_n$, and $F_4$, $l\neq 3,5$.
\item In type $G_2$, $l\neq 7$.
\end{itemize}
We make these assumptions so that certain cohomology groups will vanish (see the proofs of Propositions~\ref{prop:HUB} and~\ref{prop:HuB}).

\subsection{Small quantum Borel algebras}

Let $\g$ be a simple Lie algebra with fixed Cartan $\h$, Dynkin diagram $\Gamma$, and $\b=\b_+\subset \g$ the positive Borel subalgebra.  We let $\Phi$ denote the collection of roots for $\g$, and $(?,?):\h\times\h\to \C$ denote the unique scaling of the Killing form so that $(\alpha,\alpha)=2$ whenever $\alpha$ is a short simple root.
\par

If we take $D$ to be the ratio of the lengths of a long root in $\Gamma$ to that of a short root, then for the Cartan integers we have
\[
\langle \alpha,\beta\rangle=D\langle\beta,\alpha\rangle,\ \ \text{whenever $\beta$ is long and $\alpha$ is short}. 
\]
In this case $(\alpha,\beta)$ is equal to the Cartan integer $\langle \alpha,\beta\rangle$.  The number $D$ is $1,2$, or $3$ and the matrix $\left[(\alpha,\beta)\right]_{\alpha,\beta\in\Gamma}$ representing the Killing form is integer valued with nonpositive entries off the diagonal.
\par

For $\alpha$ a simple root we define
\[
q_{\alpha}=q^{(\alpha,\alpha)/2}=\left\{\begin{array}{ll}
q^{D} &\ \mathrm{when\ }\alpha\mathrm{\ is\ a\ long},\\
q &\ \mathrm{when\ }\alpha\mathrm{\ is\ a\ short.}
\end{array}\right.
\]
We also adopt the standard expressions
\[
[n]_{q_\alpha}=(q_\alpha^n-q_\alpha^{-n})/(q_\alpha-q_\alpha^{-1})\ \ \mathrm{and}\ \ \qbinom{n}{m}_{q_\alpha}=\frac{[n]_{q_\alpha}\dots [n-m+1]_{q_\alpha}}{[m]_{q_\alpha}\dots [1]_{q_\alpha}}. 
\]
The small quantum Borel algebra is given by generators and relations as
\[
u_q(\b)=\C\langle E_\alpha, K_\alpha^{\pm 1}:\alpha\in \Gamma\rangle/(\mathrm{relations}),
\]
where the set of relations are
\[
[K_{\alpha},K_{\beta}]=0,\ \  K_{\alpha}E_\beta=q^{(\alpha,\beta)}E_\beta K_\alpha,
\]
\begin{equation}\label{eq:149}
\sum_{i=0}^{1-\langle\alpha,\beta\rangle}(-1)^i\qbinom{1-\langle\alpha,\beta\rangle}{i}_{q_\alpha}E_\alpha^{1-\langle\alpha,\beta\rangle-i}E_\beta E_\alpha^i=0,
\end{equation}
\[
K_\alpha^l=1,\ \  E_\mu^l=0\ \ \forall\ \mu\in \Phi^+.
\]
The $E_\mu$ here are given by an action of the Braid group as in~\cite{lusztig90,lusztig90II,deconcinikac89}.  The relations~\eqref{eq:149} are called the (quantum) {\it Serre relations}.
\par

The Hopf structure on $u_q(\b)$ is given by $\epsilon(E_\alpha)=0$, $\epsilon(K_\alpha)=1$, and
\[
\Delta(K_{\alpha})=K_\alpha\ot K_\alpha,\ \ \Delta(E_\alpha)=E_\alpha\ot 1+ K_\alpha\ot E_\alpha,
\]
\[
S(K_\alpha)=K_{\alpha}^{-1},\ \ S(E_\alpha)=-K^{-1}_\alpha E_\alpha.
\]
We let $G$ denote the group of grouplikes in $u_q(\b)$, which happens to be the subgroup generated by the elements $K_\alpha$.  We have $G\cong (\mathbb{Z}/l\mathbb{Z})^{\mathrm{rank}(\g)}$ with free basis over $\mathbb{Z}/l\mathbb{Z}$ given by the $K_\alpha$, $\alpha\in \Gamma$.

\subsection{Lusztig's divided powers algebra}
\label{sect:Lus}

Let $U_q(\b)$ denote Lusztig's quantum Borel algebra~\cite{lusztig90,lusztig90II}.  We will not give a detailed presentation of $U_q(\b)$ here, but remark on a few specific points.  The Hopf algebra $U_q(\b)$ is generated by elements $E_\alpha$ and $K_\alpha$, for $\alpha$ simple roots, and divided powers $E_\alpha^{(l)}$.  There is a Hopf inclusion $u_q(\b)\to U_q(\b)$ which sends the generators for $u_q(\b)$ to the corresponding generators in $U_q(\b)$.  The inclusion $u_q(\b)\to U_q(\b)$ identifies $u_q(\b)$ with a normal Hopf subalgebra in $U_q(\b)$, and it fits into a short exact sequence of Hopf algebras
\[
1\to u_q(\b)\to U_q(\b)\to U(\n)\to 1,
\]
where $U(\n)$ is the usual universal enveloping algebra.  The final map $U_q(\b)\to U(\n)$ sends the divided powers $E_\alpha^{(l)}$ to $e_\alpha$ in $\n=\oplus_{\mu\in \Phi^+}\C e_\alpha$~\cite[Lem. 8.9]{lusztig90II}.  By an {\it exact sequence} we mean that $U_q(\b)$ is faithfully flat over $u_q(\b)$, that the kernel of $U_q(\b)\to U(\n)$ is generated by augmentation ideal for $u_q(\b)$, and that $u_q(\b)$ is the $U(\n)$-coinvariants in $U_q(\b)$, as in~\cite{arkhipovgaitsgory03} (see also~\cite[Thm. 3.5.3, Prop. 3.4.3]{montgomery}).
\par

The divided power operators $E_\alpha^{(l)}$ are connected with elements in $u_q(\b)$ via the comultiplication
\[
\Delta(E_\alpha^{(l)})=\sum_{0\leq i\leq l} q^{-i(l-i)}K_\alpha^iE_\alpha^{(l-i)}\ot E_\alpha^{(i)}
\]
~\cite[Prop. 4.8]{lusztig90}.

\subsection{The root lattice and root lattice gradings}

We let $Q=Q(\g)$ denote the root lattice for $\g$, and $Q^+$ denote the positive root lattice
\[
Q^+=\left\{\sum_{\alpha\in\Gamma}n_\alpha\alpha:n_\alpha\geq 0\text{ for all }\alpha\text{ and }n_\beta>0\text{ for some }\beta\right\}.
\]
\par

The quotient $Q/lQ$ is identified with $G$ in the obvious manner, $\alpha\mapsto K_\alpha$.  The Killing form on $\h^\ast$ induces a form on $Q/lQ$, and hence on $G$.  Whence we have a canonical map from $Q/lQ\cong G$ to the character group $G^\vee$ which sends $\alpha\in Q/lQ$, or $K_\alpha\in G$, to the function $(K_\beta\mapsto q^{(\alpha,\beta)})$.  We call this map the {\it Killing map}, and denote it $\kappa:G\to G^\vee$.  In this way we view roots for $\g$ as characters on $G$.  Note that the Killing map will be an isomorphism exactly when the  Killing form on $G$ is non-degenerate.  This in turn occurs exactly when the determinant of the Cartan matrix is invertible in $\mathbb{Z}/l\mathbb{Z}$.
\par

The coradical gradings on $u_q(\b)$ and $U_q(\b)$ can be seen as induced by a natural gradings by the root lattice
\[
\deg_Q(K_\alpha)=0,\ \deg_Q(E_\alpha)=\alpha,\ \deg_Q(E^{(l)}_\alpha)=l\alpha.
\]
We have the obvious group map
\[
|?|:Q\to \mathbb{Z},\ \ |\sum_{\alpha\in \Gamma} n_\alpha \alpha|=\sum_{\alpha\in \Gamma}n_\alpha,
\]
and see that the coradical gradings on $u_q(\b)$ and $U_q(\b)$ agree with the $\mathbb{Z}$-grading induced by the above map.
\par

We adopt the following partial ordering on $Q$: For elements $\eta=\sum_\alpha n_\alpha\alpha$ and $\eta'=\sum_\alpha n'_\alpha\alpha$ in the root lattice we write $\eta\leq \eta'$ if $n_\alpha\leq n'_\alpha$ for each $\alpha$.  We write $\eta<\eta'$ if $\eta\leq \eta'$ and there exists a simple $\beta$ with $n_\beta<n'_\beta$.  Note that the above map $|?|:Q\to \mathbb{Z}$ preserves our partial ordering.

\section{Basic information for twists}
\label{sect:twists}

We cover here some basic information on twists, and recall a classification of twists for the group algebra of a finite abelian group, from~\cite{gelaki02,guillotkassel09}.

\subsection{Twists and gauge equivalence}
\label{sect:Jsigma}

We only set our conventions here, and refer the reader to~\cite{montgomery04,guillotkassel09,radford11,kassel12,egno15} etc. for a more detailed account.
\par

A Drinfeld twist, or just twist, for a (complete topological) Hopf algebra $H$ is a unit $J\in H\ot H$ satisfying
\[
(\Delta\ot 1)(J)(J\ot 1)=(1\ot\Delta)(J)(1\ot J)\ \ \mathrm{and}\ \ (\epsilon\ot 1)(J)=(1\ot\epsilon)(J)=1.
\]
We replace $\ot$ with $\hat{\ot}$ in the topological setting.  We let $H^J$ denote the corresponding twisted Hopf algebra, which is just $H$ as an algebra along with the new comultiplication $\Delta^J(h)=J^{-1}\Delta(h)J$, for each $h\in H$.  We denote the gauge action of $H^\times$ on the set of twists for $H$ by a dot,
\[
v\cdot J=\Delta(v)J(v^{-1}\ot v^{-1}).
\]
We say two twists $J$ and $J'$ are {\it gauge equivalent} if there is a unit $v$ so that $J'=v\cdot J$, and let $\mathrm{Tw}(H)$ denote the set of gauge equivalence classes of twists for $H$.

A {\it cocycle} twist for $H$ is defined dually, i.e. as a convolution invertible function $\sigma:H\ot H\to \C$ which satisfies
\[
\sigma(h_1h'_1,h'')\sigma(h_2,h'_2)=\sigma(h,h'_1h''_1)\sigma(h'_2,h''_2)
\]
and $\sigma(1,h)=\sigma(h,1)=1$ for each $h,h',h''\in H$.  From $\sigma$ we form the new Hopf algebra $H_\sigma$, which is the coalgebra $H$ along with the new multiplication
\[
h\cdot_\sigma h'=\sigma^{-1}(h_1,h'_1)\sigma(h_3,h'_3)h_2h'_2.
\]
There is a dual notion of gauge equivalence as well, and we let $\mathrm{CoTw}(H)$ denote the set of gauge equivalence classes of cocycle twists for $H$.
\par

Any Drinfeld twist $J$ for $H$ defines a cocycle twist for the dual $H^\ast$, and there is an equality of Hopf algebras $(H^J)^\ast=(H^\ast)_J$.

\subsection{twists for finite abelian groups}

Given a finite abelian group $\Lambda$ we let $\Lambda^\vee$ denote the character group.  For any $a\in \Lambda^\vee$ we let
\[
P_a=\frac{1}{|\Lambda|}\sum_{\lambda\in \Lambda}a(\lambda^{-1})\lambda
\]
denote the corresponding idempotent in $\C[\Lambda]$.  We will be considering bilinear forms on the character group $\Lambda^\vee$.  By this we mean set maps $B:\Lambda^\vee\times \Lambda^\vee\to \mathbb{C}^\times$ which satisfy $B(ab,c)=B(a,c)B(b,c)$ and $B(a,bc)=B(a,b)B(a,c)$, for each $a,b,c\in \Lambda^\vee$.
\par

We say a form $B$ on $\Lambda^\vee$ is {\it alternating} if $B(a,b)=B^{-1}(b,a)$ for each $a,b\in \Lambda^\vee$, and we let $\mathrm{Alt}(\Lambda^\vee)$ denote the set of alternating forms on $\Lambda^\vee$.
\par

Under the natural identification $\C[\Lambda^\vee]=\C[\Lambda]^\ast$ any form $B$ on $\Lambda^\vee$ is identified with an element $B\in \C[\Lambda]\ot\C[\Lambda]$.  This element can be written explicitly as $B=\sum_{a,b\in \Lambda^\vee}B(a,b)P_a\ot P_b$.  Bilinearity implies that any form $B$ on the character group $\Lambda^\vee$ defines a twist for $\C[\Lambda]$.  So we get a map
\[
\left\{\mathrm{Bilinear\ forms\ on\ }\Lambda^\vee\right\}\to \mathrm{Tw}(\C[\Lambda]),
\]
which restricts to a map $\mathrm{Alt}(\Lambda^\vee)\to\mathrm{Tw}(\C[\Lambda])$.

\begin{proposition}[{\cite[Cor. 5.8]{guillotkassel09}}]\label{lem:TwG}
When $\Lambda$ is a finite abelian group of odd order, the map $\mathrm{Alt}(\Lambda^\vee)\to\mathrm{Tw}(\C[\Lambda])$ is an isomorphism.
\end{proposition}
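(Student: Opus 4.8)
The plan is to translate the statement into group cohomology and then invoke the classical computation of the Schur multiplier of a finite abelian group. Using the orthogonal idempotents $P_a$ one writes $\C[\Lambda]\ot\C[\Lambda]=\bigoplus_{a,b\in\Lambda^\vee}\C\,(P_a\ot P_b)$, so an element $J=\sum_{a,b}J(a,b)P_a\ot P_b$ is recorded by a function $J:\Lambda^\vee\times\Lambda^\vee\to\C$, and it is a unit exactly when $J$ is $\C^\times$-valued. Since $\Delta(P_a)=\sum_{bc=a}P_b\ot P_c$ and $\epsilon(P_a)=\delta_{a,1}$, the twist equations become precisely the normalized $2$-cocycle identities $J(a,b)J(ab,c)=J(b,c)J(a,bc)$ and $J(1,b)=J(a,1)=1$ for the group $\Lambda^\vee$ with coefficients in $\C^\times$, while the gauge action of a unit $v=\sum_a v(a)P_a$ replaces $J$ by $J$ times the coboundary $(a,b)\mapsto v(ab)v(a)^{-1}v(b)^{-1}$. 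This gives a canonical identification $\mathrm{Tw}(\C[\Lambda])=H^2(\Lambda^\vee,\C^\times)$, under which the map of the Proposition is the assignment sending an alternating form $B$ — which, being bilinear, is automatically a normalized cocycle — to its class.

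It therefore suffices to show that the inclusion $\mathrm{Alt}(\Lambda^\vee)\hookrightarrow Z^2(\Lambda^\vee,\C^\times)$ induces an isomorphism onto $H^2(\Lambda^\vee,\C^\times)$. For injectivity: every coboundary is a symmetric function, so an alternating form $B$ which is a coboundary satisfies $B(a,b)=B(b,a)=B(a,b)^{-1}$; since $\Lambda^\vee$ has odd order, $B$ is valued in roots of unity of odd order, whence $B\equiv1$. For surjectivity, given a cocycle $\sigma$ form its commutator pairing $\beta_\sigma(a,b)=\sigma(a,b)\sigma(b,a)^{-1}$; the cocycle identity makes this an alternating bicharacter, and $\sigma\mapsto\beta_\sigma$ is a homomorphism vanishing on coboundaries, hence descends to $c:H^2(\Lambda^\vee,\C^\times)\to\mathrm{Alt}(\Lambda^\vee)$. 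The group $\mathrm{Alt}(\Lambda^\vee)$ is finite abelian of exponent dividing $|\Lambda|$, hence of odd order, so squaring is an automorphism of it; for $B$ alternating one checks directly $\beta_B=B^2$, so $c$ composed with the inclusion is the squaring automorphism. Consequently, given $\sigma$, choose the unique alternating $B$ with $B^2=\beta_\sigma$; then $\sigma B^{-1}$ is a cocycle with $\beta_{\sigma B^{-1}}=\beta_\sigma\,(B^2)^{-1}=1$, i.e.\ $\sigma B^{-1}$ is symmetric. A symmetric $2$-cocycle valued in the divisible — hence injective as a $\mathbb{Z}$-module — group $\C^\times$ is a coboundary, since such cocycles compute $\mathrm{Ext}^1_{\mathbb{Z}}(\Lambda^\vee,\C^\times)=0$. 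Thus $[\sigma]=[B]$ lies in the image and $[\sigma]\mapsto B$ is a two-sided inverse.

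The only genuinely nontrivial inputs are the vanishing $\mathrm{Ext}^1_{\mathbb{Z}}(\Lambda^\vee,\C^\times)=0$ — equivalently, that symmetric $2$-cocycles split, which is where divisibility of $\C^\times$ enters — together with the odd-order bookkeeping that both kills the $2$-torsion in the injectivity argument and makes squaring invertible on $\mathrm{Alt}(\Lambda^\vee)$ so that the correction term $B$ can be extracted. I expect the fiddliest part of a careful write-up to be the bookkeeping around the commutator pairing and the square-root normalization, but I do not anticipate a real obstacle; alternatively one could bypass the square root by a normal-form argument, reducing an arbitrary cocycle to a product of bilinear "cross-term" pieces after killing its restrictions to the cyclic factors of $\Lambda^\vee$.
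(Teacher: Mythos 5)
Your argument is correct, and it differs from the paper in an important structural respect: the paper does not prove this Proposition at all but simply cites it as \cite[Cor. 5.8]{guillotkassel09}, whereas you have given a complete self-contained derivation. The route you take — identify $\mathrm{Tw}(\C[\Lambda])$ with $H^2(\Lambda^\vee,\C^\times)$ via the idempotent decomposition, observe that coboundaries are symmetric so alternating coboundaries are trivial on an odd-order group, and then use the commutator pairing $\beta_\sigma$ together with vanishing of $\mathrm{Ext}^1_{\mathbb{Z}}(\Lambda^\vee,\C^\times)$ to show every class is represented by an alternating form — is essentially the classical computation of the Schur multiplier of a finite abelian group, and I believe it is close in spirit to the argument underlying the cited reference. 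The one small subtlety you handle correctly, which is the heart of why the odd-order hypothesis is needed, is that $\beta_B=B^2$ for $B$ alternating, so extracting the alternating representative of a class requires a square root, which exists uniquely precisely because $\mathrm{Alt}(\Lambda^\vee)$ has odd order. What the student's presentation buys is transparency about exactly where the odd-order assumption enters (twice: killing $2$-torsion in injectivity, and inverting squaring in surjectivity), and where divisibility of $\C^\times$ enters (splitting symmetric cocycles); what the paper's citation buys is brevity and compatibility with the reference's broader machinery, which covers the non-abelian and even-order situations as well.
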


The interested reader should also consult~\cite[Prop. 4.2, Prop. 5.1, Thm. 5.5]{guillotkassel09}.

\section{Completion and duality for big quantum groups}
\label{sect:Uq}

In this section we present some relations between Lusztig's quantum Borel algebra, the small quantum Borel algebra, and the De Concini-Kac quantum Borel algebra.  In particular, we present a duality between (a completion of) Lusztig's divided powers algebra and (a modified form of) the De Concini Kac algebra.  These relations will be of significance in later sections, where we would like to make some cohomological arguments involving the duals of $U_q(\b)$ and $u_q(\b)$.  We expect that the material will be well-known to experts

\subsection{Duals and completion for $U_q(\b)$}
Take $U_q=U_q(\b)$.
\par

We consider the completion $\hat{U}_q$ of $U_q$ with respect to the $\mathbb{Z}$-grading:
\[
\hat{U}_q=\varprojlim_N U_q/(\text{elem's of degree }\geq N).
\]
Since the comultiplication on $U_q$ respects the grading, the new object $\hat{U}_q$ is a topological Hopf algebra, with comultiplication landing in the completed tensor product
\[
\Delta:\hat{U}_q\to \hat{U}_q\ \!\hat{\ot}\ \!\hat{U}_q.
\]

\begin{definition}
We let $U_q(\b)^\#$ denote the ($\mathbb{Z}$-)graded dual of $U_q(\b)$.
\end{definition}

The Hopf algebra $U_q^\#$ is graded by the root lattice $Q$, where we take $((U_q)_{-\mu})^\ast=(U_q^\#)_\mu$ for each $\mu\in Q$.  We can alternately arrive at $U_q(\b)^\#$ as the continuous dual of $\hat{U}_q$.  Whence the completed algebra $\hat{U}_q$ is naturally identified with the (non-graded, non-continuous) dual $\hat{U}_q=(U_q^\#)^\ast$.
\par

The Hopf quotient $U_q\to \C[G]$ dualizes to a Hopf embedding $\C[G^\vee]\to U_q^\#$.  Furthermore, since the coradical filtration on $U_q$ is exhaustive, with $F_0U_q=\C[G]$ and 
\[
F_1 U_q=\C[G]\oplus \left(\C[G]\cdot \{E_\alpha:\alpha\in\Gamma\}\right),
\]
we have that $U_q^\#$ is generated in degree $-1$ by the functions $(\C[G]\cdot\{ E_\alpha:\alpha\in \Gamma\})^\ast$.  We let $X_\alpha\in U_q^\#$ denote the degree $-1$ function
\[
X_\alpha:\left(\C[G]\cdot\{E_\alpha\}_\alpha\right)\to \C,\ \ K_\gamma E_\beta\mapsto \delta_{\alpha,\beta}.
\]

\begin{lemma}\label{lem:PD1}
The algebra $U_q^\#$ is generated by the grouplikes $G^\vee$ and the functions $X_\alpha$.  Each $X_\alpha$ is skew primitive, with $\Delta(X_\alpha)= X_\alpha\ot \alpha^{-1}+1\ot X_\alpha$.
\end{lemma}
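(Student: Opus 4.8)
The plan is to combine the coalgebra structure of $U_q(\b)$ with the explicit form of the coradical filtration already recorded before the statement. Recall we have identified $F_0U_q=\C[G]$ and $F_1U_q=\C[G]\oplus\bigl(\C[G]\cdot\{E_\alpha:\alpha\in\Gamma\}\bigr)$. The first task is to show that the graded dual $U_q^\#$ is generated as an algebra by the grouplikes $G^\vee$ together with the functions $X_\alpha$. The generation claim was essentially noted just before the statement: since the coradical filtration is exhaustive and $U_q$ is generated as an algebra by $F_1U_q$ (the $E_\alpha^{(l)}$ live in higher degree but are limits/products of lower-degree generators, or one argues directly on the coradically graded pieces), the dual filtration makes $U_q^\#$ generated in degrees $0$ and $-1$. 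The degree $0$ part is $\C[G]^\ast=\C[G^\vee]$, and the degree $-1$ part $(\C[G]\cdot\{E_\alpha\}_\alpha)^\ast$ has basis dual to $\{K_\gamma E_\alpha\}$. One then observes that every such basis functional is obtained from the $X_\alpha$ by multiplying with an appropriate character in $G^\vee$: concretely, if $\chi_\gamma\in G^\vee$ is the character with $\chi_\gamma(K_\beta)=\delta_{\gamma,\beta}$ (suitably), then $\chi\cdot X_\alpha$, evaluated via $\Delta$ on $K_\gamma E_\alpha$, picks out the coefficient at $K_\gamma E_\alpha$; so the $X_\alpha$ together with $G^\vee$ span degree $-1$, hence generate $U_q^\#$.

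Next I would compute $\Delta(X_\alpha)$ in $U_q^\#\mathop{\hat\ot}U_q^\#$ by dualizing the multiplication of $U_q$. For a functional $f$ of degree $-1$, $\Delta(f)(a\ot b)=f(ab)$, and by the grading $\Delta(X_\alpha)$ lives in $(U_q^\#)_0\ot(U_q^\#)_{-1}\ \oplus\ (U_q^\#)_{-1}\ot(U_q^\#)_0$, i.e. it has the shape $(\text{character})\ot X_\alpha+X_\alpha\ot(\text{character})$ modulo a priori other degree-$(-1)$ functionals — but since $X_\alpha$ is the unique (up to the group action) degree $-1$ generator attached to the simple root $\alpha$, and the $Q$-grading on $U_q^\#$ forces each tensor factor to carry $Q$-degree $0$ or $-\alpha$, the middle terms are pinned down to be multiples of $X_\alpha$. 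To get the precise characters, one evaluates: $\Delta(X_\alpha)(K_\gamma E_\alpha\ \ot\ K_\delta)=X_\alpha(K_\gamma E_\alpha K_\delta)=X_\alpha(q^{-(\alpha,\delta)}K_\gamma K_\delta E_\alpha)=q^{-(\alpha,\delta)}$, which identifies the left tensor factor of the "$X_\alpha\ot ?$" term as $X_\alpha\ot(\text{the character }K_\delta\mapsto q^{-(\alpha,\delta)})$. Under the convention that roots are regarded as characters on $G$ via the Killing map $\kappa$, this character is exactly $\alpha^{-1}$. Symmetrically, $\Delta(X_\alpha)(K_\gamma\ot K_\delta E_\alpha)=X_\alpha(K_\gamma K_\delta E_\alpha)=1$, giving the term $1\ot X_\alpha$. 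The counit normalization $\epsilon(X_\alpha)=X_\alpha(1)=0$ confirms there is no $1\ot1$ term, so $\Delta(X_\alpha)=X_\alpha\ot\alpha^{-1}+1\ot X_\alpha$, and in particular $X_\alpha$ is skew primitive.

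The main obstacle I anticipate is bookkeeping with the two gradings and the sign/inverse conventions: one must be careful that $(U_q)_{-\mu}^\ast=(U_q^\#)_\mu$, that $\Delta$ on $U_q$ sends $E_\alpha\mapsto E_\alpha\ot1+K_\alpha\ot E_\alpha$ (so the dual multiplication picks up the $K_\alpha$-twist precisely on one side), and that "$\alpha$ as a character on $G$" means $K_\beta\mapsto q^{(\alpha,\beta)}$ — so the character appearing on the right of $X_\alpha$ is $\alpha^{-1}$, not $\alpha$, matching the asserted formula. The generation statement itself is essentially formal once one knows $U_q$ is generated by $F_1U_q$ as an algebra (which follows from the PBW-type description of $U_q(\b)$, the divided powers $E_\alpha^{(l)}$ being expressible through the comultiplication identity recorded in Section~\ref{sect:Lus} together with the lower generators); the only real content is the coalgebra computation above, which is a short direct check.
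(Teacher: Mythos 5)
Your proposal takes essentially the same route as the paper's proof: you constrain $\Delta(X_\alpha)$ to bidegrees $(0,-\alpha)$ and $(-\alpha,0)$ using the $Q$-grading, then read off the two components by evaluating on elements $K_\gamma E_\alpha\otimes K_\delta$ and $K_\gamma\otimes K_\delta E_\alpha$ (the paper uses $E_\beta\otimes K_\mu$ and $K_\mu\otimes E_\beta$; the computations are the same), and you argue generation by showing $\C[G^\vee]\{X_\alpha\}$ spans the degree $-1$ part (the paper makes this explicit via the idempotents $P_\gamma$). The core of the argument is correct.

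One side remark you make is false and should be dropped: $U_q(\b)$ is \emph{not} generated as an algebra by $F_1U_q$. The divided powers $E_\alpha^{(l)}$ sit in coradical degree $l$ and are genuine independent algebra generators --- indeed $E_\alpha^l=0$ in $U_q(\b)$, so $E_\alpha^{(l)}$ cannot be written as a polynomial in $F_1U_q$ --- and the parenthetical hedge (``limits/products of lower-degree generators'') does not fix this. Fortunately the claim you want, that $U_q^\#$ is generated in degrees $0$ and $-1$, is the \emph{dual} statement about the exhaustive coradical (coalgebra) filtration, which the paper asserts just before the lemma and does not deduce from any low-degree algebra generation of $U_q$. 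Also, your ``character $\chi_\gamma\in G^\vee$ with $\chi_\gamma(K_\beta)=\delta_{\gamma,\beta}$'' is not a character (a Kronecker delta is not multiplicative); what you want is either a character $\omega\in G^\vee$ paired against the idempotent $P_\gamma$, as in the paper, or an element of $\C[G^\vee]$ rather than of $G^\vee$. These are notational slips, but in a write-up they should be cleaned to match the paper's explicit calculation $\omega(K_\alpha)^{-1}\omega X_\alpha(P_\gamma E_\beta)=\delta_{\omega,\gamma}\delta_{\alpha,\beta}$.
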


We have abused notation here to identify $\alpha\in \Gamma$ with its corresponding character on $G$, via the Killing map.  Recall that for $\gamma\in G^\vee$ we let $P_\gamma=\frac{1}{|G|}\sum_{g\in G} \gamma(g^{-1})g$ denote the corresponding idempotent in $\C[G]$.

\begin{proof}
We need to show that the elements $\C[G^\vee]\{X_\alpha\}_\alpha$ produce all linear functions from $\C[G]\{E_\alpha\}_\alpha$ to $\C$.  For any character $\omega\in G^\vee$ we have
\[
\omega X_\alpha(P_\gamma E_\beta)=\sum_{ab=\gamma}\omega(K_\beta)\omega(P_a)X_\alpha(P_b E_\beta)= \omega(K_\beta)X_\alpha(P_{\omega^{-1}\gamma}E_\beta).
\]
Note that $X_\alpha(P_\tau E_\beta)$ vanishes when $\beta\neq\alpha$, for every $\tau\in G^\vee$, and that
\[
X_\alpha(P_\tau E_\alpha)=\frac{1}{|G|}\sum_{g\in G} \tau(g)=\left\{
\begin{array}{ll}
1 &\text{when }\tau\text{ is the identity in }G^\vee\\
0 &\text{else}.
\end{array}\right.
\]
Hence
\[
\omega(K_\alpha)^{-1}\omega X_\alpha(P_\gamma E_\beta)=\delta_{\omega,\gamma}\delta_{\alpha,\beta}.
\]
Since $\{P_\gamma E_\beta:\gamma\in G^\vee, \beta\in \Gamma\}$ provides a basis for the space $\C[G]\{E_\beta\}_\beta$, we see that $\C[G^\vee]\{X_\alpha\}_\alpha$ produces all degree $1$ functions.
\par

We need also to compute the comultiplication $\Delta(X_\alpha)$.  We have
\[
\Delta(X_\alpha)(K_\mu\ot E_\beta)=X_\alpha(K_\mu E_\beta)=\delta_{\alpha,\beta}
\]
and
\[
\Delta(X_\alpha)(E_\beta\ot K_\mu)=q^{-(\mu,\beta)}X_\alpha(K_\mu E_\beta)=q^{-(\mu,\alpha)}\delta_{\alpha,\beta}.
\]
Since $\Delta(X_\alpha)$ lies in bidegrees $(0,-1)$ and $(-1,0)$, the above computation show that $\Delta(X_\alpha)= X_\alpha\ot \alpha^{-1}+1\ot X_\alpha$.
\end{proof}

\begin{lemma}\label{lem:PD2}
We have $\omega X_\alpha \omega^{-1}=\omega(K_\alpha)X_\alpha$, for each $\omega\in G^\vee$ and $\alpha\in \Gamma$.
\end{lemma}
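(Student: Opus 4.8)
The plan is to verify the identity $\omega X_\alpha \omega^{-1} = \omega(K_\alpha) X_\alpha$ directly by evaluating both sides on a spanning set of $U_q$ lying in the appropriate $Q$-degree, namely the elements $K_\mu E_\beta$ of coradical degree $1$. Since $\omega$ is grouplike and $X_\alpha$ is homogeneous of $Q$-degree $-\alpha$, the element $\omega X_\alpha \omega^{-1}$ is again homogeneous of $Q$-degree $-\alpha$, so it suffices to test against the basis $\{P_\gamma E_\beta : \gamma\in G^\vee,\ \beta\in\Gamma\}$ (or equivalently $\{K_\mu E_\beta\}$) exactly as in the proof of Lemma~\ref{lem:PD1}.

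First I would recall that the product in the dual $U_q^\#$ is given by $(fg)(x) = (f\ot g)(\Delta(x))$. Applying this with $f = \omega$, then $g = X_\alpha$, then $\omega^{-1}$, and using $\Delta(K_\mu E_\beta) = K_\mu E_\beta \ot K_\mu + K_\mu K_\beta \ot K_\mu E_\beta$ together with the fact that $\omega$ kills $E$-terms and $X_\alpha$ is supported only in degree $-\alpha$, I would compute
\[
(\omega X_\alpha \omega^{-1})(K_\mu E_\beta) = \omega(K_\mu)\, X_\alpha(K_\mu E_\beta)\, \omega(K_\mu)^{-1}\cdot(\text{a grouplike scalar from the $E_\beta$ leg}).
\]
The key point is that when one moves $\omega^{-1}$ past $X_\alpha$, the coproduct $\Delta(X_\alpha) = X_\alpha\ot\alpha^{-1} + 1\ot X_\alpha$ from Lemma~\ref{lem:PD1} produces the extra character value $\alpha^{-1}$ evaluated on the relevant grouplike, i.e. a factor $\alpha^{-1}(\cdot)$; tracking this factor against $\omega(\cdot)$ on the same grouplike yields precisely $\omega(K_\alpha)$ after using $\alpha(K_\mu) = q^{(\mu,\alpha)}$ and the Killing-map identification. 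Since $X_\alpha(K_\mu E_\beta) = \delta_{\alpha,\beta}$, both sides agree on every basis element, giving the claim. A cleaner alternative is to observe that the identity $\Delta(X_\alpha) = X_\alpha\ot\alpha^{-1} + 1\ot X_\alpha$ says $X_\alpha$ is a skew-primitive of weight $\alpha^{-1}$, and that conjugation by a grouplike $\omega$ acts on such a skew-primitive by the scalar $\omega$ evaluated on its weight — here $\omega(\alpha) = q^{(\text{wt}(\omega),\alpha)} = \omega(K_\alpha)$ under the Killing identification of $\alpha$ with a character on $G$ and of $\omega$ with a grouplike.

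I do not expect a serious obstacle here; the only place requiring care is the bookkeeping of which character gets evaluated on which grouplike — in particular making sure the factor that survives is $\omega(K_\alpha)$ and not, say, $\omega(K_\alpha)^{-1}$ or $\alpha(\text{something involving }\omega)$. This is entirely pinned down by the coproduct formula for $X_\alpha$ established in Lemma~\ref{lem:PD1} and by the convention $\Delta(E_\alpha) = E_\alpha\ot 1 + K_\alpha\ot E_\alpha$ fixing the sign of the weight, so a careful one-line computation on $K_\mu E_\alpha$ settles it.
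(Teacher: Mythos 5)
Your approach---evaluating $\omega X_\alpha \omega^{-1}$ on the basis elements $K_\gamma E_\beta$ via the iterated comultiplication of $U_q(\b)$---is exactly the paper's proof, and you correctly identify that the bookkeeping collapses to a one-line computation. One small correction to your sketch: the surviving factor $\omega(K_\beta)$ does not come from the coproduct $\Delta(X_\alpha)$ as you suggest, but from the $K_\beta$ leg of $\Delta(E_\beta)=E_\beta\ot 1+K_\beta\ot E_\beta$ appearing inside $\Delta^{(2)}(K_\gamma E_\beta)$, which gives $(\omega X_\alpha\omega^{-1})(K_\gamma E_\beta)=\omega(K_\gamma K_\beta)\,\omega(K_\gamma)^{-1}\delta_{\alpha,\beta}=\omega(K_\alpha)\delta_{\alpha,\beta}$, in agreement with the paper.
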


\begin{proof}
The second iteration of the comultiplication gives
\[
\omega X_\alpha \omega^{-1}(K_\gamma E_\beta)=\omega(K_\gamma K_\beta)\omega(K_\gamma)^{-1}\delta_{\alpha\beta}=\omega(K_\alpha)\delta_{\alpha,\beta}=\omega(K_\alpha)X_\alpha(K_\gamma E_\beta),
\]
for all $\gamma,\beta$.
\end{proof}

\subsection{Identification with the modified De Concini-Kac algebra}
\label{sect:modDK}

Recall the De Concini-Kac algebra
\[
U^{DK}_{q^{-1}}(\b_-)=\frac{\C\langle K_\alpha,F_\alpha:\alpha\in \Gamma\rangle}{\left(\begin{array}{c}
[K_{\alpha},K_{\beta}]=0,\ K_{\alpha}F_\beta=q^{(\alpha,\beta)}F_\beta K_\alpha,\ K_\alpha^l=1,\\
\sum_{i=0}^{1-\langle\alpha,\beta\rangle}(-1)^i\qbinom{1-\langle\alpha,\beta\rangle}{i}_{q^{-1}_\alpha}F_\alpha^{1-\langle\alpha,\beta\rangle-i}F_\beta F_\alpha^i=0
\end{array}\right)}
\]
~\cite{deconcinikac89}.  The $K_\alpha$ here are grouplike and
\[
\Delta(F_\alpha)=F_\alpha\ot K_\alpha^{-1}+ 1\ot F_\alpha.
\]
The algebra $U^{DK}_{q^{-1}}$ is graded by the root lattice, with $K_\alpha$ of degree $0$ and $F_\alpha$ of degree $-\alpha$.
\par

There is a factoring $U^{DK}_{q^{-1}}=U^-_{q^{-1}}\rtimes \C[G]$, where $U^-_{q^{-1}}$ is the subalgebra generated by the $F_\alpha$.  We define the {\it modified De Concini-Kac algebra} as the smash product
\[
\U^{DK}=\U^{DK}_{q^{-1}}=U^-_{q^{-1}}\rtimes \C[G^\vee],
\]
where $\C[G^\vee]$ acts on the negative subalgebra in the natural manner $\omega\cdot F_\alpha=\omega(K_\alpha)F_\alpha$.  We give $\U^{DK}$ the obvious Hopf structure, where $G^\vee$ consists of grouplikes and the $\Delta(F_\alpha)=F_\alpha\ot \alpha^{-1}+1\ot F_\alpha$.
\par

Most of the properties of $U^{DK}_{q^{-1}}$ will be inherited by the modified algebra.  For example, $\U^{DK}$ is $Q$-graded in the obvious way, and Lusztig's basis for $U^-_{q^{-1}}$ produces a basis for $\U^{DK}$.

\begin{lemma}\label{lem:DKD}
There is an isomorphism
\[
\varphi:\U^{DK}\to U_q(\b)^\#,\ \ \begin{array}{l}
F_\alpha\mapsto X_\alpha\\
\omega\mapsto \omega,
\end{array}
\]
of $Q$-graded Hopf algbras.
\end{lemma}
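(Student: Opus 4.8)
The plan is to construct $\varphi$ as the unique algebra map sending $F_\alpha\mapsto X_\alpha$ and $\omega\mapsto\omega$, then check it is a well-defined Hopf map and a bijection. The first task is to verify that the assignment respects the defining relations of $\U^{DK}$. The grouplike relations $[\omega,\omega']=0$ and $\omega^l=1$ hold in $U_q^\#$ since $\C[G^\vee]$ is a genuine Hopf subalgebra there (the dual of the quotient $U_q\to\C[G]$), and the cross relation $\omega F_\alpha=\omega(K_\alpha)F_\alpha\,\omega$ is exactly Lemma~\ref{lem:PD2}. The remaining relations are the quantum Serre relations among the $F_\alpha$; for these I would invoke the identification of $U_q^\#$ with (a completion-dual of) a De Concini--Kac-type negative part, but more honestly the cleanest route is the one taken in the next subsection of the paper: the subalgebra of $U_q^\#$ generated by the $X_\alpha$ is the graded dual of the positive part $U_q^{\geq 0}/\C[G]$-coinvariants, and by the known self-duality pattern for Lusztig's algebras (see~\cite{lusztig90,deconcinikac89}) this dual is presented by the Serre relations in the $q^{-1}_\alpha$-deformed form, since the pairing swaps $q\leftrightarrow q^{-1}$. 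So the $X_\alpha$ satisfy precisely the relations imposed on the $F_\alpha$, giving a well-defined algebra map $\varphi$.

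Next I would check $\varphi$ is a morphism of Hopf algebras. It is a map of $Q$-graded algebras by construction, since $\deg_Q(F_\alpha)=-\alpha=\deg_Q(X_\alpha)$ and $\deg_Q(\omega)=0=\deg_Q(\omega)$. For comultiplication it suffices to check on generators: on grouplikes $\omega$ both sides give $\omega\ot\omega$, and on $F_\alpha$ we have $\Delta(F_\alpha)=F_\alpha\ot\alpha^{-1}+1\ot F_\alpha$ by definition of $\U^{DK}$, which maps under $\varphi\ot\varphi$ to $X_\alpha\ot\alpha^{-1}+1\ot X_\alpha=\Delta(X_\alpha)$ by Lemma~\ref{lem:PD1}. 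Compatibility with counit and antipode then follows automatically, since a bialgebra map between Hopf algebras is automatically a Hopf map.

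Finally I would argue $\varphi$ is bijective. Surjectivity is immediate from Lemma~\ref{lem:PD1}, which states that $G^\vee$ together with the $X_\alpha$ generate $U_q^\#$. For injectivity I would compare Lusztig-type PBW bases: the remark preceding the lemma notes that Lusztig's basis for $U^-_{q^{-1}}$ gives a basis of $\U^{DK}$ (monomials in the root vectors $F_\mu$ times grouplikes), and under $\varphi$ these map to the analogous monomials in $U_q^\#$; since $U_q$ has a Lusztig PBW basis in the $E_\mu$ and divided powers, its graded dual $U_q^\#$ has dual-basis of matching size in each $Q$-degree. Counting dimensions in each graded component $\left(\U^{DK}\right)_\mu\to\left(U_q^\#\right)_\mu$ — both being finite dimensional — a surjection between equal finite dimensions is an isomorphism, so $\varphi$ is injective degreewise, hence an isomorphism of $Q$-graded Hopf algebras.

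The main obstacle is the first step: pinning down that the $X_\alpha$ satisfy the $q^{-1}_\alpha$-Serre relations rather than merely ``some'' relations. This is where the duality between Lusztig's divided powers algebra and the De Concini--Kac algebra genuinely enters, and it is presumably why the paper devotes Section~\ref{sect:Uq} (and defers technical verifications to Section~\ref{sect:UDK}) to setting up exactly this pairing; the sign/inversion bookkeeping in the $q$-binomial coefficients is the delicate point.
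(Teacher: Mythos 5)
Your overall architecture — establish the Serre relations, verify a Hopf map on generators, then conclude by surjectivity (Lemma~\ref{lem:PD1}) plus a degreewise dimension count against Lusztig/PBW bases — matches the paper's proof quite closely, and the parts you flesh out (cross relations via Lemma~\ref{lem:PD2}, comultiplication via Lemma~\ref{lem:PD1}, $Q$-graded dimension comparison) are correct and are exactly what the paper does.

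The gap, as you yourself flag, is the Serre-relations step, and here the issue is not just vagueness but circularity. You appeal to a ``known self-duality pattern for Lusztig's algebras'' under which the graded dual of $U_q(\b)$ satisfies the $q^{-1}_\alpha$-Serre relations. But Lusztig's divided powers algebra is \emph{not} self-dual; its graded dual is the De Concini--Kac algebra, and that identification is precisely the content of Lemma~\ref{lem:DKD}. So the fact you would like to cite is the lemma being proven. The paper instead gives a concrete two-stage argument: in the simply laced case one checks the Serre relations for the $X_\alpha$ directly; in the non-simply laced case, the Cartan determinant is $1$ or $2$ (hence prime to $l$), so $u_q(\g)$ is factorizable, and the $R$-matrix gives an explicit Hopf isomorphism $u_q(\b)^\ast \to u_{q^{-1}}(\b_-)$ under which the $X_\alpha$ go to scalar multiples of $F_\alpha$. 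That establishes the Serre relations in $u_q(\b)^\ast$. The crucial remaining step — missing from your proposal — is to lift them from $u_q(\b)^\ast$ to $U_q(\b)^\#$: since the inclusion $u_q(\b)\hookrightarrow U_q(\b)$ is an isomorphism in all $\mathbb Z$-degrees $<l$, the dual projection $U_q(\b)^\#\to u_q(\b)^\ast$ is an isomorphism in degrees $>-l$, and the hypotheses on $l$ in Section~\ref{sect:uq} guarantee the Serre relations live in exactly those degrees. Without some such mechanism to promote the relations from the small dual to the big dual, the argument does not close.
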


\begin{proof}[Sketch proof]
We claim first that $U_q(\b)^\#$ satisfies all of the relations of the modified De Concini-Kac algebra.  The only relations left to check are the Serre relations, by Lemma~\ref{lem:PD2}.  In the simply laced case one can readily check the relations by hand.  In the non-simply laced case the determinant of the Cartan matrix will be $1$ or $2$, and hence invertible in $\mathbb{Z}/l\mathbb{Z}$.  The small quantum group $u_q(\g)$ will then be factorizable with $R$-matrix
\[
R=\prod_{\mu\in\Phi^+}\left(\sum_{n=0}^{l-1}q^{-n(n+1)/2}\frac{(1-q^2)^n}{[n]_q!} E^n_{\mu}\ot F^n_{\mu}\right)\Omega,
\]
where $\Omega\in \C[G]\ot \C[G]$ is the inverse of the Killing form (see e.g.~\cite{turaev}).  The $R$-matrix provides an isomorphism $u_q(\b)^\ast\to u_q(\b_-)^{cop}\cong u_{q^{-1}}(\b_-)$ which sends each generator $X_\alpha$ to $q^{2}(q^{-1}-q)F_\alpha$~\cite{radford93}.  Hence the Serre relations hold for $U_q(\b)^\#$ modulo the kernel of the projection $U_q(\b)^\#\to u_q(\b)^\ast$.
\par

Since the inclusion $u_q(\b)\to U_q(\b)$ is an isomorphism in all $\mathbb{Z}$-degrees $<l$, it follows that the graded dual is an isomorphism in all degrees $>-l$.  By our assumptions on $l$, the Serre relations are in degrees $>-l$, and therefore must hold for $U_q(\b)^\#$.  It follows that the algebra map $\varphi$ can be defined in the prescribed manner.
\par

By Lemma~\ref{lem:PD1} $\varphi$ is surjective.  Also, Lusztig's bases~\cite{deconcinikac89,lusztig90,lusztig90II} of both $\U^{DK}$ and $U_q$ tell us that the two algebras have the same dimension in each $Q$-degree.  So $\varphi$ must be an isomorphism.  The fact that $\varphi$ is a Hopf map follows from the fact that comultiplications agree on the generators
\[
(\varphi\ot\varphi)\Delta(F_\alpha)=X_\alpha\ot \alpha^{-1}+1\ot X_\alpha=\Delta(X_\alpha).
\]
\end{proof}

\section{Interactions between Drinfeld twists and cohomology}
\label{sect:drinfH}

Fix $\hat{U}_q=\hat{U}_q(\b)$, $u_q=u_q(\b)$, and $\U^{DK}=\U^{DK}_{q^{-1}}$ the modified De Concini-Kac algebra of Section~\ref{sect:modDK}.  We identify $\U^{DK}$ with the continuous dual of $\hat{U}_q$ via the isomorphism of Lemma~\ref{lem:DKD}.
\par

In this section we cover an association between twists for $\hat{U}_q$ (resp. $u_q$) and cohomology classes for $\U^{DK}$ (resp. $u_q^\ast$).  Namely we show that minimal $Q$-homogenous terms $J_\gamma$ in a twists $J$ produce cohomology classes $[J_\gamma]\in H^2(\U^{DK}_B)$ (resp. $[J_\gamma]\in H^2((u_q^\ast)_B)$), where $B$ is a form on $G^\vee$ associated to $J$.  The class is show to vanish exactly when the term $J_\gamma$ in $J$ can be eliminated via gauge transformation.

\subsection{Basics for twists of $u_q$ and $\hat{U}_q$}

\begin{lemma}\label{lem:469}
Any twist for $u_q$, or $\hat{U}_q$, is gauge equivalent to one of the form $J=B+\sum_{\mu\in Q^+}J_\mu$, where $B\in \C[G]\ot\C[G]$ is an alternating bilinear form on $G^\vee$ and each $J_\mu$ is homogenous of $Q$-degree $\mu$.
\end{lemma}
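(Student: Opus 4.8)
The plan is to start with an arbitrary twist $J$ for $u_q$ (or $\hat U_q$) and project it onto its degree-zero component with respect to the $Q$-grading. Since the comultiplication on $u_q$ and on $\hat U_q$ respects the $Q$-grading, and the twist cocycle conditions
\[
(\Delta\ot 1)(J)(J\ot 1)=(1\ot\Delta)(J)(1\ot J),\qquad (\epsilon\ot 1)(J)=(1\ot\epsilon)(J)=1,
\]
are equalities of $Q$-graded elements, one can compare graded pieces of lowest degree. The lowest-degree piece of every term in these identities lives in $Q$-degree $0$, and collecting the degree-$0$ parts shows that the degree-$0$ component $J_0\in\C[G]\ot\C[G]$ of $J$ is itself a twist for $\C[G]$. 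Moreover, since $Q^+$ consists only of elements $\mu$ with $|\mu|\geq 1$, writing $J=J_0+\sum_{\mu\in Q^+}J_\mu$ puts $J$ exactly in the claimed shape except that $J_0$ need not yet be alternating.

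The second step is to replace $J_0$ by an alternating form. By Proposition~\ref{lem:TwG}, since $|G|=l^{\mathrm{rank}(\g)}$ is odd, the map $\mathrm{Alt}(G^\vee)\to\mathrm{Tw}(\C[G])$ is an isomorphism; hence $J_0$ is gauge equivalent, as a twist for $\C[G]$, to a unique alternating form $B$. Write $J_0=w\cdot B$ for some unit $w\in\C[G]^\times$. Since $\C[G]\subset u_q\subset\hat U_q$, this $w$ is also a unit in $u_q$ and in $\hat U_q$, and I can apply the gauge transformation $w^{-1}\cdot(-)$ to all of $J$. The key point is that gauging by an element $w$ of $Q$-degree $0$ preserves the $Q$-grading: $w^{-1}\cdot J$ again has the form (degree-$0$ part) $+\sum_{\mu\in Q^+}(\text{stuff in degree }\mu)$, and its degree-$0$ part is exactly $w^{-1}\cdot J_0=B$. (Here I should note that $w^{-1}\cdot J$ genuinely lands in $\hat U_q\,\hat\ot\,\hat U_q$, or in $u_q\ot u_q$ in the algebraic case, so there is no convergence issue.) This produces a gauge-equivalent twist of the asserted form.

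The only real subtlety — and the step I would present most carefully — is the comparison of lowest-degree terms establishing that $J_0$ is a twist for $\C[G]$. One has to be slightly careful in the completed setting $\hat U_q$, where $J$ is an infinite sum $\sum_{\mu\geq 0}J_\mu$: the identities above are identities in the completed tensor products $\hat U_q\,\hat\ot\,\hat U_q\,\hat\ot\,\hat U_q$, but degree-by-degree comparison is still legitimate because the relevant maps are continuous and grading-preserving, so extracting the $Q$-degree-$0$ component is a well-defined continuous operation that commutes with $\Delta$, $\epsilon$, and multiplication. Once this is granted, the degree-$0$ parts of the two sides of the hexagon identity read $(\Delta\ot 1)(J_0)(J_0\ot 1)=(1\ot\Delta)(J_0)(1\ot J_0)$ with $\Delta$ the comultiplication of $\C[G]$, and the counit conditions descend verbatim, so $J_0\in\mathrm{Tw}(\C[G])$. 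Everything else — the nonnegativity of degrees on $Q^+$, invertibility of the $Q$-degree-$0$ part of an invertible $Q$-graded element over a connected grading, and the compatibility of degree-$0$ gauging with the grading — is routine and I would state it without belaboring the computations.
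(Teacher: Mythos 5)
Your proof is correct and follows essentially the same route as the paper: decompose $J$ by $Q$-degree, observe that the degree-$0$ part $J_0$ is itself a twist for $\C[G]$ (which the paper dismisses as ``for grading reasons'' and you spell out), then gauge by a unit in $\C[G]$ so that the degree-$0$ part becomes an alternating form via Proposition~\ref{lem:TwG}. The only difference is that you flesh out the grading argument and the behavior of degree-$0$ gauging, which is a fine thing to do but not a different approach.
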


\begin{proof}
We may expand any twist as $J'=J'_0+\sum_{\mu\in Q^+}J'_\mu$.  The degree $0$ term $J'_0$ will be a twist for $\C[G]$, for grading reasons, and by~\cite[Cor. 5.8]{guillotkassel09} there is a unit $v\in \C[G]$ so that $v\cdot J'_0=B$ is an alternating bilinear form on $G^\vee$.  Hence $J=v\cdot J'$ will have the prescribed form.
\end{proof}

\begin{remark}
We will generally denote the degree $0$ term in a twist $J$ by $B$, although $B$ needn't always restrict to a bilinear form on $G^\vee$.  As we are interested in twists up to gauge equivalence, one is free to always assume $B$ is a form on $G^\vee$ in any case.
\end{remark}

Since any twist $B$ for $\C[G]$ is also a twist for the ambient algebra $\hat{U}_q$, we may consider the corresponding twisted Hopf algebra $\hat{U}_q^B$.  Recall that $\hat{U}_q=\hat{U}_q^B$ as algebras, and hence the corresponding groups of units are also identified, $(\hat{U}_q)^\times=(\hat{U}_q^B)^\times$.

\begin{lemma}\label{lem:Uset}
Given a twist $B$ for $\C[G]$, left multiplication by $B^{\pm 1}$ provides an isomorphism of $(\hat{U}_q)^\times$-sets
\[
\xymatrix{
\{\mathrm{Drinfeld\ twists\ of\ }\hat{U}_q\}\ar@/^.2pc/[r]^{B^{-1}} & \{\mathrm{Drinfeld\ twists\ of\ }\hat{U}_q^B\},\ar@/^.2pc/[l]^B
}
\]
under the gauge actions.  Furthermore, if a subgroup $\mathbb{V}\subset (\hat{U}_q)^\times$ preserves the subset of twists for $u_q$, under the gauge action, then $\mathbb{V}$ also preserves twists for $u_q^B$ and we have an isomorphism of $\mathbb{V}$-sets
\[
\xymatrix{
\{\mathrm{Drinfeld\ twists\ of\ }u_q\}\ar@/^.2pc/[r]^{B^{-1}} & \{\mathrm{Drinfeld\ twists\ of\ }u_q^B\}.\ar@/^.2pc/[l]^B
}
\]
\end{lemma}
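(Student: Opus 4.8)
The plan is to reduce the statement to the standard composition formalism for Drinfeld twists, namely: if $B$ is a twist for a Hopf algebra $H$ and $J'$ is a twist for $H^B$, then $BJ'$ is a twist for $H$ and $(H^B)^{J'}=H^{BJ'}$; taking $J'=B^{-1}$ one sees in particular that $B^{-1}$ is a twist for $H^B$ with $(H^B)^{B^{-1}}=H$. (These are one-line verifications using $\Delta^{B}(h)=B^{-1}\Delta(h)B$.) In our topological setting everything is legitimate because $B^{\pm 1}=\sum_{a,b\in G^\vee}B^{\pm 1}(a,b)P_a\ot P_b$ lies in the finite-dimensional summand $\C[G]\ot\C[G]$ of $\hat{U}_q\,\hat{\ot}\,\hat{U}_q$, and $\hat{U}_q=\hat{U}_q^B$ as topological algebras. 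Applying the formalism with $H=\hat{U}_q$: if $J$ is a twist for $\hat{U}_q=(\hat{U}_q^B)^{B^{-1}}$, then $B^{-1}J$ is a twist for $\hat{U}_q^B$, and conversely $BJ'$ is a twist for $\hat{U}_q$ whenever $J'$ is a twist for $\hat{U}_q^B$. These two assignments are visibly mutually inverse, which gives the asserted bijection of sets; it then remains only to match up the gauge actions.

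For the $(\hat{U}_q)^\times$-equivariance, I would note first that $\hat{U}_q$ and $\hat{U}_q^B$ share the same underlying (topological) algebra, so their unit groups coincide, and that the comultiplication of $\hat{U}_q^B$ sends a unit $v$ to $B^{-1}\Delta(v)B$. Hence for a twist $J'$ of $\hat{U}_q^B$, writing $J=BJ'$, the gauge transform of $J'$ by $v$ inside $\hat{U}_q^B$ is
\[
(B^{-1}\Delta(v)B)\,J'\,(v^{-1}\ot v^{-1})=B^{-1}\Delta(v)\,(BJ')\,(v^{-1}\ot v^{-1})=B^{-1}\big(\Delta(v)\,J\,(v^{-1}\ot v^{-1})\big)=B^{-1}(v\cdot J).
\]
So $J\mapsto B^{-1}J$ intertwines the gauge actions of $(\hat{U}_q)^\times$, proving the first isomorphism of $(\hat{U}_q)^\times$-sets.

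For the second statement I would observe that $B$ is also a twist for $u_q$, since $B\in\C[G]\ot\C[G]\subseteq u_q\ot u_q$ and $\C[G]$ is a Hopf subalgebra of $u_q$, so the twist equations restrict. Thus $u_q^B$ is a Hopf subalgebra of $\hat{U}_q^B$, and twists for $u_q^B$ are exactly those twists for $\hat{U}_q^B$ lying in $u_q^B\ot u_q^B=u_q\ot u_q$; since $B^{\pm 1}\in\C[G]\ot\C[G]\subseteq u_q\ot u_q$, left multiplication by $B^{\pm 1}$ preserves $u_q\ot u_q$ and the bijection above restricts to a bijection between twists of $u_q$ and twists of $u_q^B$. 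Finally, for $v\in\mathbb{V}$ and a twist $J'$ for $u_q^B$, write $J'=B^{-1}J$ with $J=BJ'$ a twist for $u_q$; by the equivariance identity the $v$-gauge transform of $J'$ equals $B^{-1}(v\cdot J)$, and $v\cdot J$ is a twist for $u_q$ by the hypothesis on $\mathbb{V}$, so the transform is again a twist for $u_q^B$. Hence $\mathbb{V}$ stabilizes the twists of $u_q^B$, and the restricted map $B^{\pm 1}$ is $\mathbb{V}$-equivariant, being a restriction of the $(\hat{U}_q)^\times$-equivariant map. The argument is mostly bookkeeping; the one point worth isolating — and the reason the hypothesis on $\mathbb{V}$ is needed — is that the group gauging twists of $u_q$ is the unit group of the \emph{completion} $\hat{U}_q$, not of $u_q$, so a general unit need not carry a twist supported on $u_q$ back into $u_q\ot u_q$. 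The content here is precisely that multiplication by the grouplike twist $B$ commutes with the gauge action tightly enough to transport whatever stabilization property $\mathbb{V}$ enjoys for $u_q$ verbatim to $u_q^B$.
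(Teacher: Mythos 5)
Your proof is correct and follows essentially the same route as the paper: the same verification that left multiplication by $B^{\pm 1}$ carries twists to twists (the paper spells out the three-line twist-equation computation that you summarize as the standard composition formalism), the same equivariance identity $(B^{-1}\Delta(v)B)J'(v^{-1}\ot v^{-1})=B^{-1}(v\cdot J)$, and the same observation that $B\in\C[G]\ot\C[G]$ is a unit in $u_q\ot u_q$ so everything restricts. The concluding remark explaining why the hypothesis on $\mathbb{V}$ is needed — that the gauging group lives in $\hat{U}_q$, not $u_q$ — is a useful clarification but not a deviation in method.
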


\begin{proof}
We first prove that there is a bijection of sets for $\hat{U}_q$, without considering the gauge action.  Suppose $J$ is a twist for $\hat{U}_q$.  One simply calculates:
\[
\begin{array}{rl}
(\Delta^B\ot 1)(B^{-1}J)(B^{-1}J\ot 1)&
=(B^{-1}\ot 1)(\Delta\ot 1)(B^{-1})(\Delta\ot 1)(J)(J\ot 1)\\
&=(1\ot B^{-1})(1\ot\Delta)(B^{-1})(1\ot \Delta)(J)(1\ot J)\\
&=(1\ot \Delta^B)(B^{-1}J)(1\ot B^{-1}J).
\end{array}
\]
So $B^{-1}J$ is a twist for $\hat{U}_q^B$.  A completely similarly calculation shows that $BF$ is a twist for $\hat{U}_q$ whenever $F$ is a twist for $\hat{U}_q^B$.  Compatibility with the action of $(\hat{U}_q)^\times$ follows from the equalities
\[
B^{-1}\Delta(v)J(v^{-1}\ot v^{-1})=\Delta^B(v)(B^{-1}J)(v^{-1}\ot v^{-1})
\]
and $B\Delta^B(v)F(v^{-1}\ot v^{-1})=\Delta(v)(BF)(v^{-1}\ot v^{-1})$.  The statement for $u_q$ follows by the above computations and the fact that $B$ is a unit in $u_q\ot u_q$.
\end{proof}

\subsection{Elimination of low degree terms and cohomology}
\label{sect:506}

Given a twist $J'=B+\sum_{\mu}J'_\mu$ for $\hat{U}_q$, we can multiply by $B^{-1}$ to get a twist $J=1+\sum_{\mu\in Q^+} J_\mu$ for $\hat{U}_q^B$.  Hence, considering arbitrary twists for $\hat{U}_q$ is equivalent to considering twists of the form $J=1+\sum_{\mu\in Q^+}J_\mu$ for the algebras $\hat{U}_q^B$, where $B\in \C[G]\otimes \C[G]$.  Fix such a twist $J\in \hat{U}_q^B\ot \hat{U}^B_q$ and suppose $\gamma$ is of minimal $Q^+$-degree such that $J_\gamma\neq 0$.
\par

The equation
\[
(\Delta^B\ot 1)(J)(J\ot 1)=(1\ot \Delta^B)(J)(1\ot J)
\]
produces in degree $\gamma$ the equation
\begin{equation}\label{eq:cocycle}
(1\ot J_\gamma)-(\Delta^B\ot 1)(J_\gamma)+(1\ot \Delta^B)(J_\gamma)-(J_\gamma\ot 1)=0.
\end{equation}
Furthermore, a unit $v=1+\sum_{\mu\in Q^+}v_\mu$ in $\hat{U}_q^B$ is such that the gauged twist $v\cdot J$ has vanishing term in degree $\gamma$ if and only if $v_\gamma$ solves the equation
\begin{equation}\label{eq:cobdry}
(1\ot v_\gamma)-\Delta^B(v_\gamma)+(v_\gamma\ot 1)=J_\gamma.
\end{equation}
We call equation~\eqref{eq:cocycle} the cocycle equation and equation~\eqref{eq:cobdry} the coboundary equation.  We call a solution $v_\gamma$ to~\eqref{eq:cobdry} a {\it bounding element} for $J_\gamma$.

\begin{lemma}\label{lem:bleh}
Suppose $J=1+\sum_{\mu\in Q^+}J_\mu$ is a twist for $\hat{U}_q^B$ and $\gamma\in Q^+$ is a minimal degree such that $J_\gamma\neq 0$.  Consider any unit $v=1+\sum_{\mu\in Q^+} v_\mu$ with $v_\gamma$ solving~\eqref{eq:cobdry} and $v_\nu=0$ whenever $J_{\nu'}=0$ for all $\nu'\leq \nu$.  Then the gauged twist $(v\cdot J)$ has the following properties: 
\begin{enumerate}
\item[(i)] $(v\cdot J)_\gamma=0$.
\item[(ii)] $(v\cdot J)_\nu=0$ whenever $J_{\nu'}=0$ for all $\nu'\leq \nu$.
\end{enumerate}
In particular, the above conclusions hold when $v_\gamma$ solves~\eqref{eq:cobdry} and we take $v=\exp(v_\gamma)$.
\end{lemma}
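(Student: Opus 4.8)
The plan is to argue degree by degree in the $Q$-grading, using throughout that each of $v$, $v^{-1}$, $\Delta^B(v)$, $J$, and $v^{-1}\ot v^{-1}$ has degree-$0$ component the unit. First I would record the bookkeeping facts. Write $v^{-1}=1+\sum_{\mu\in Q^+}w_\mu$. Since $1-v$ is supported in strictly positive degrees and a product of elements of strictly positive degree has strictly larger degree than each of its factors, the expansion $v^{-1}=\sum_{k\ge 0}(1-v)^k$ shows that $w_\mu=0$ whenever $v$ is trivial in all degrees $\le\mu$; likewise, since $B$ has degree $0$ and $\Delta$ is graded, $\Delta^B=B^{-1}\Delta(-)B$ is $Q$-graded, so $\Delta^B(v)_\mu=\Delta^B(v_\mu)$, and this also vanishes whenever $v$ is trivial in all degrees $\le\mu$. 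The hypothesis on $v$ thus says: $v$, $v^{-1}$, and $\Delta^B(v)$ are all trivial in every degree $\mu$ such that $J$ vanishes in all degrees $\le\mu$; in particular, since $\gamma$ is minimal with $J_\gamma\ne 0$, all three are trivial in degrees $<\gamma$, and $w_\gamma=-v_\gamma$.

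To prove (i), I would extract the degree-$\gamma$ component of $v\cdot J=\Delta^B(v)\,J\,(v^{-1}\ot v^{-1})$. Since the three factors are trivial below degree $\gamma$, the only splittings of $\gamma$ into three nonnegative parts that contribute are those with a single part equal to $\gamma$, giving
\[
(v\cdot J)_\gamma=\big(w_\gamma\ot 1+1\ot w_\gamma\big)+J_\gamma+\Delta^B(v_\gamma)=J_\gamma-\Big((1\ot v_\gamma)-\Delta^B(v_\gamma)+(v_\gamma\ot 1)\Big),
\]
which vanishes precisely by the coboundary equation~\eqref{eq:cobdry}.

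For (ii), I would fix $\nu$ with $J_{\nu'}=0$ for all $\nu'\in Q^+$ with $\nu'\le\nu$. Then $J$ vanishes in all degrees $\le\mu$ for every $\mu\le\nu$, so by the first paragraph $v$, $v^{-1}$, and $\Delta^B(v)$ are trivial in all degrees $\le\nu$. Expanding $(v\cdot J)_\nu$ as a sum of terms $\Delta^B(v)_{\mu_1}\,J_{\mu_2}\,(v^{-1}\ot v^{-1})_{\mu_3}$ over $\mu_1+\mu_2+\mu_3=\nu$ with parts in $Q^+\cup\{0\}$, any term with $\mu_2\ne 0$ has $\mu_2\le\nu$ and hence $J_{\mu_2}=0$; so only $\mu_2=0$ survives, and $(v\cdot J)_\nu=\big(\Delta^B(v)(v^{-1}\ot v^{-1})\big)_\nu=\big(v\cdot(1\ot 1)\big)_\nu$, the degree-$\nu$ part of the gauge transform of the trivial twist. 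In that product every remaining part $\mu_1,\mu_3$ is again $\le\nu$, so $\Delta^B(v)_{\mu_1}$ and $(v^{-1}\ot v^{-1})_{\mu_3}$ vanish unless $\mu_1=\mu_3=0$; the surviving contribution is $1\ot 1$, which sits in degree $0$ and so contributes nothing to degree $\nu\ne 0$. Hence $(v\cdot J)_\nu=0$.

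Finally, for the ``in particular'' clause I would verify that $v=\exp(v_\gamma)$ satisfies the running hypotheses: it is a unit with inverse $\exp(-v_\gamma)$, both series converging in $\hat{U}_q$ since $v_\gamma$ has positive $\mathbb{Z}$-degree; its degree-$0$ part is $1$, its degree-$\gamma$ part is $v_\gamma$ (the higher powers $v_\gamma^k$ being homogeneous of degree $k\gamma\ne\gamma$), and it is supported only in degrees $k\gamma$ for $k\ge 0$. Since $J_\gamma\ne 0$ and $\gamma\le k\gamma$ for $k\ge 1$, no positive multiple of $\gamma$ is a degree at or below which $J$ vanishes, so $v_\mu=0$ in every degree $\mu$ forced by the hypothesis; thus (i) and (ii) apply. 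The argument is essentially bookkeeping, and the only point that requires real care is the interplay between the partial order on $Q$ and the grading — that products of positive-degree elements strictly raise degree — which is exactly what makes the triviality of $v$ in low degrees propagate to $v^{-1}$, to $\Delta^B(v)$, and ultimately to $v\cdot J$.
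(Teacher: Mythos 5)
Your proof is correct and follows essentially the same strategy as the paper's: expand $v\cdot J=\Delta^B(v)\,J\,(v^{-1}\ot v^{-1})$ degree by degree, observe that triviality of $v$ below a given degree propagates to $v^{-1}$ and $\Delta^B(v)$, and then read off that the degree-$\gamma$ term reduces to the coboundary equation while the terms in degrees $\nu$ below which $J$ vanishes have no surviving contributions. The paper compresses this into a single displayed formula and says "direct analysis"; you supply the same bookkeeping in organized form, which is a reasonable fleshing out of the same argument.
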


\begin{proof}
This follows from a direct analysis of the equation
\[
\begin{array}{rl}
v\cdot J&=J_\gamma-(1\ot v_\gamma-\Delta^B(v_\gamma)+v_\gamma\ot 1)\\
&\ \ +\sum_{\mu\neq \gamma}{\left(J_\mu-(1\ot v_\mu-\Delta^B(v_\mu)+v_\mu\ot 1)\right)}\\
&\ \ +\sum_I\Delta^B(v_a)J_b(v_c^{-1}\ot v_d^{-1})\\\\
&=\sum_{\mu\neq \gamma}{\left(J_\mu-(1\ot v_\mu-\Delta^B(v_\mu)+v_\mu\ot 1)\right)}\\
&\ \ +\sum_I\Delta^B(v_a)J_b(v_c^{-1}\ot v_d^{-1}),
\end{array}
\]
where the final sum is indexed over the set $I$ of tuples $(a,b,c,d)\in Q^4$ with at least two nonzero entries.
\end{proof}

The lemma implies that if we can solve~\eqref{eq:cobdry}, for a given minimal term $J_\gamma$ in a given twist $J$, then we can get rid of the term $J_\gamma$ via gauge transformation.  Furthermore, we can do so in a manner which does not introduce new terms in low degree.
\par

Consider again a twist $J=1+\sum_{\mu\in Q^+}J_\mu$ for $\hat{U}^B_q$, and take $J_\gamma$ a nonvanishing term of minimal $Q$-degree.  If we consider $J_\gamma$ as a function on the cocycle twisted De Concini-Kac algebra $\U^{DK}_B$, and apply~\eqref{eq:cocycle} to elements $X\ot Y\ot Z$ in $\U^{DK}_B$, we arrive at the equation
\[
\epsilon(X)J_\gamma(Y,Z)-J_\gamma(X\cdot_B Y,Z)+J_\gamma(X,Y\cdot_B Z)-J_\gamma(X,Y)\epsilon(Z)=0.
\]
Similarly, the coboundary equation becomes
\begin{equation}\label{eq:514}
\epsilon(X)v_\gamma(Y)-v_\gamma(X\cdot_B Y)+v_\gamma(X)\epsilon(Y)=J_\gamma(X,Y).
\end{equation}
So, if we let
\[
C^\bt(\U^{DK}_B)=0\to k\to \Hom_k(\U^{DK}_B,k)\to \Hom_k(\U^{DK}_B\ot \U^{DK}_B,k)\to\cdots
\]
denote the standard complex computing Hopf cohomology $H^\bt(\U^{DK}_B)=\Ext_{\U^{DK}_B}(k,k)$, then $J_\gamma$ is a degree $2$-cocycle in $C^\bt(\U^{DK}_B)$.  By equation~\eqref{eq:514}, the function $J_\gamma$ is bounded by some $v_\gamma$ if and only if the class $[J_\gamma]$ vanishes in cohomology.
\par

A completely similar analysis holds for the small quantum Borel $u_q^B$.  We record these facts in a lemma.

\begin{lemma}
Let $B$ be a twist for $\C[G]$ and $J=1+\sum_{\mu\in Q^+}J_\mu$ be a twist for $\hat{U}^B_q$ (resp. $u_q^B$).  If $\gamma$ is a minimal $Q$-degree so that $(J-1)_\gamma$ is nonvanishing, then the corresponding function
\[
J_\gamma\in C^2(\U^{DK}_B)\ \ \left(\text{resp. }J_\gamma\in C^2((u_q^\ast)_B)\right)
\]
is a cocycle.  There is a homogenous element $v_\gamma$ in $U^B_q$ (resp. $u_q^B$) bounding $J_\gamma$, in the sense described above, if and only if the cohomology class $[J_\gamma]\in H^2(\U^{DK}_B)$ (resp. $[J_\gamma]\in H^2((u_q^\ast)_B)$) vanishes.
\end{lemma}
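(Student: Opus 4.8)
The plan is to make precise the informal discussion immediately preceding the lemma, organizing it as an explicit identification of the twist‐theoretic data with the bar complex computing Hopf cohomology of the dual. First I would recall that, by the last sentence of Section~\ref{sect:twists}, a twist $J$ for $\hat U_q^B$ is the same thing as a cocycle twist $J$ for the dual $(\hat U_q^B)^\ast=(\hat U_q^\ast)_B=\U^{DK}_B$, and the defining twist equation for $J$ dualizes, degree by degree in the $Q$-grading, to a system of equations on the functions $J_\mu\in\Hom_k(\U^{DK}_B\ \hat\otimes\ \U^{DK}_B,k)$. The key point is that in the \emph{minimal} degree $\gamma$, all lower‐degree terms vanish and the only surviving contribution of $J$ in that degree is $J_\gamma$ itself (since $J_0=1$ contributes only through the augmentation), so that the degree‐$\gamma$ component of the twist equation is exactly the linear equation displayed before the lemma; rewriting this pairing against triples $X\otimes Y\otimes Z\in\U^{DK}_B$ gives precisely the statement that $J_\gamma$ is a $2$-cocycle in the standard complex $C^\bt(\U^{DK}_B)$.

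Next I would treat the coboundary side. Given a unit $v=1+\sum_{\mu\in Q^+}v_\mu$, Lemma~\ref{lem:bleh} isolates the degree‐$\gamma$ effect of the gauge action: $(v\cdot J)_\gamma=0$ exactly when $v_\gamma$ solves equation~\eqref{eq:cobdry}, and dualizing~\eqref{eq:cobdry} against $X\otimes Y$ yields equation~\eqref{eq:514}, i.e.\ $J_\gamma=\partial v_\gamma$ in $C^\bt(\U^{DK}_B)$ where $\partial$ is the bar differential. Conversely, if $[J_\gamma]=0$ then there is a $1$-cochain $w\colon\U^{DK}_B\to k$ with $\partial w=J_\gamma$; I would normalize $w$ so that $w(1)=0$ (adjusting by a scalar multiple of $\epsilon$, which is a cocycle), and then $v_\gamma\in\hat U_q^B$ is the homogeneous element of $Q$-degree $\gamma$ corresponding to $w$ under the identification $\hat U_q=(\U^{DK})^\ast$ from Section~\ref{sect:drinfH}; it solves~\eqref{eq:cobdry}, so by Lemma~\ref{lem:bleh} (taking $v=\exp(v_\gamma)$, which is legitimate since $v_\gamma$ lies in strictly positive $Q$-degree and $\hat U_q$ is complete) the term $J_\gamma$ is bounded. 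For $u_q^B$ the argument is verbatim the same, replacing $\U^{DK}_B$ by $(u_q^\ast)_B$ and $\hat U_q^B$ by $u_q^B$ throughout, using that $(u_q^B)^\ast=(u_q^\ast)_B$; here finiteness makes the completion issue vacuous.

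I would note two bookkeeping points to check carefully. First, one must verify that the minimality of $\gamma$ really does kill all the cross terms in the dualized twist equation, so that equation~\eqref{eq:cocycle} — and not some equation with correction terms from products $J_\mu\cdot_B J_{\mu'}$ — is what one obtains in degree $\gamma$; this is where the $Q$-grading on $\U^{DK}_B$ (inherited from $\U^{DK}$, which is $Q$-graded by Section~\ref{sect:modDK}, with the cocycle twist by $B\in\C[G]\otimes\C[G]$ preserving the grading) is used, together with $Q^+$ being closed under addition and bounded below in the partial order of Section~\ref{sect:uq}. Second, one must confirm that the bounding element produced is genuinely $Q$-homogeneous of degree $\gamma$; this is immediate from the graded duality $(U_q^\#)^\ast=\hat U_q$ once $w$ is taken to be supported in the single graded piece $((\U^{DK}_B)_{-\gamma})^\ast$, which one may assume since $J_\gamma$ is homogeneous and $\partial$ is $Q$-graded. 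The main obstacle, such as it is, is purely organizational: keeping the pairing conventions consistent so that the bar differential $\partial$ on $C^\bt(\U^{DK}_B)$ matches, sign for sign, the operator $F\mapsto (1\otimes F)-(\Delta^B\otimes 1)(F)+(1\otimes\Delta^B)(F)-(F\otimes 1)$ appearing in~\eqref{eq:cocycle} and its degree‐two analogue; once the conventions are pinned down the lemma is essentially a transcription.
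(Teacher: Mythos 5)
Your proposal is correct and takes essentially the same route as the paper: the lemma is stated there without a separate proof precisely because it records the dualization of equations~\eqref{eq:cocycle} and~\eqref{eq:cobdry} carried out in the preceding paragraphs, and your write-up simply makes that transcription explicit (with appropriate attention to the $Q$-grading and to why the cross terms $J_\mu\cdot_B J_{\nu}$ drop out in the minimal degree). The extra bookkeeping you flag — normalization of the bounding cochain and homogeneity via graded duality, which places the bounding element in $U_q^B$ rather than merely in the completion — are details the paper leaves implicit, but your handling of them matches the intended argument.
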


\section{Cohomologies of $\U^{DK}_B$ and $(u^\ast_q)_B$}
\label{sect:cohom}

We provide here a description of the cohomologies $H^i(\U^{DK}_B)=\Ext_{\U^{DK}_B}^i(k,k)$ and $H^i((u^\ast_q)_B)=\Ext_{(u^\ast_q)_B}(k,k)$ when $B$ is a bilinear form on $G^\vee$, and $i=1,2$.  The proofs of the most important results, Propositions~\ref{prop:HUB} and~\ref{prop:HuB}, are delayed until Section~\ref{sect:UDK}, as they are something of a deviation from our primary objective of classifying twists.

\subsection{Cohomology for De Concini-Kac algebras}

Our main computation for the De Concini-Kac algebra is

\begin{proposition}\label{prop:HUB}
For any bilinear form $B$ on $G^\vee$ we have $H^1(\U^{DK}_B)=0$ and $H^2(\U^{DK}_B)=0$.
\end{proposition}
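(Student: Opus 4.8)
The plan is to first observe that since $\U^{DK}_B$ is a cocycle twist of $\U^{DK}=U^-_{q^{-1}}\rtimes \C[G^\vee]$, there is an equivalence of monoidal categories $\rep(\U^{DK}_B)\simeq \rep(\U^{DK})$, or dually of comodule categories, which in particular preserves $\Ext$-algebras of the unit. Indeed, $H^\bt(\U^{DK}_B)=\Ext^\bt_{\U^{DK}_B}(k,k)$ depends only on the tensor category $\rep((\U^{DK}_B)^\ast)$ up to the obvious regrading, and cocycle twisting the algebra corresponds to Drinfeld-twisting the dual, which leaves the underlying tensor category unchanged. Thus it suffices to compute $H^1$ and $H^2$ of the untwisted algebra $\U^{DK}$ itself, and to track the $Q$-grading so that one sees the twist $B$ plays no role. (If one prefers a hands-on approach, the cocycle $B$ only twists multiplication by grouplike-indexed scalars, so the bar complex of $\U^{DK}_B$ is isomorphic, after a diagonal rescaling, to that of $\U^{DK}$ in each $Q$-degree; I would state this as a lemma and defer the bookkeeping.)

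Next I would exploit the smash-product decomposition $\U^{DK}=U^-_{q^{-1}}\rtimes\C[G^\vee]$. Since $|G^\vee|=l^{\mathrm{rank}}$ is odd, hence invertible in $\C$, taking $G^\vee$-invariants is exact, and there is a Lyndon–Hochschild–Serre type identification
\[
H^\bt(\U^{DK})=H^\bt(U^-_{q^{-1}})^{G^\vee}.
\]
So the problem reduces to computing $H^1$ and $H^2$ of the De Concini–Kac nilpotent part $U^-_{q^{-1}}$ — a quantum analogue of $U(\n^-)$ — and then extracting $G^\vee$-invariants, equivalently the $Q$-degree-zero-weight part once one remembers that $G^\vee$ acts on the $Q$-graded pieces through the Killing map. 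Because $U^-_{q^{-1}}$ is generated by the $F_\alpha$ in degrees $-\alpha$, the bar complex in low cohomological degrees is controlled entirely by generators and the quantum Serre relations: $H^1(U^-_{q^{-1}})$ is spanned by the duals of the $F_\alpha$, sitting in $Q$-degrees $-\alpha$, none of which is $G^\vee$-invariant (as $\alpha\neq 0$ in $G^\vee$, using that $l$ does not divide the relevant structure constants), giving $H^1(\U^{DK}_B)=0$. For $H^2$, the classes are indexed by a minimal set of relations, which are the quantum Serre relations in $Q$-degrees $(1-\langle\alpha,\beta\rangle)\alpha+\beta$; here the numerical hypotheses on $l$ (the excluded small values in each type) are exactly what guarantee these relation-degrees are nonzero in $G^\vee$, so again no invariants survive and $H^2(\U^{DK}_B)=0$.

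The main obstacle is the $H^2$ computation: one must show that the \emph{only} potential degree-two cohomology in $U^-_{q^{-1}}$ in the weights that could become $G^\vee$-invariant comes from the Serre relations, i.e. that there are no "hidden" quadratic or higher relations contributing in a bad $Q$-degree, and that the $G^\vee$-action (through $\kappa$) really does move every such class off the invariant line. This is precisely the point where the type-by-type restrictions on $l=\mathrm{ord}(q)$ enter, and checking it carefully — presumably via a minimal free resolution of $k$ over $U^-_{q^{-1}}$ in degrees $\le 2$, or via the known quadratic-plus-Serre presentation of the positive part of the De Concini–Kac algebra and a small case analysis in types $B,C,F_4,G_2$ and simply-laced — is the technical heart. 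Since the excerpt explicitly says the proof of Proposition~\ref{prop:HUB} is postponed to Section~\ref{sect:UDK}, I would here record the reduction to $H^\bt(U^-_{q^{-1}})^{G^\vee}$ and the identification of the relevant $Q$-degrees, state the arithmetic claim that those degrees are nonzero in $G^\vee$ under the standing hypotheses on $l$, and defer the detailed resolution-level verification.
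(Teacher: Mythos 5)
Your overall strategy -- reduce to $H^\bt(\U^{DK}_B)=H^\bt(\U^-_B)^{G^\vee}$, compute $H^1$ and $H^2$ of the nilpotent part from a minimal graded free resolution whose generators and first syzygies are the (quantum Serre) relations, and then check that the arithmetic constraints on $l$ force the $G^\vee$-action to have no invariants in the relevant $Q$-degrees -- is precisely the paper's route. But your opening reduction to the untwisted case $B=1$ is not correct as argued, and the paper in fact does not make this reduction.

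The tensor-categorical claim is the problem. $H^\bt(\U^{DK}_B)=\Ext_{\U^{DK}_B}(k,k)$ is an invariant of the abelian module category $\rep(\U^{DK}_B)$ together with its unit, and cocycle twisting changes the \emph{algebra} structure of $\U^{DK}$, hence changes $\rep(\U^{DK}_B)$ as an abelian category. What is preserved by cocycle twisting is the comodule side: $(\U^{DK}_B)^\ast=(\hat{U}_q)^B$ is a Drinfeld twist, which leaves the algebra and hence $\rep((\U^{DK}_B)^\ast)$ unchanged. You then conflate this with an equivalence $\rep(\U^{DK}_B)\simeq\rep(\U^{DK})$, which does not follow; these are $\Ext$ over the algebra, not Cotor over the coalgebra, and they live on opposite sides of the duality. (Your parenthetical ``diagonal rescaling'' fallback is the honest version of what would be needed, but you defer it, and it is genuinely nontrivial since the rescaling of Lemma~\ref{lem:496} depends on both the grouplike and the $Q$-degree.)

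What the paper does instead is work with the twisted algebra directly: Propositions~\ref{prop:presUB} and~\ref{prop:presUB+} give an explicit presentation of $\U^{-}_B\rtimes\C[G^\vee]=\U^{DK}_B$ with twisted $B$-Serre relations~\eqref{eq:Bserre}, and then the minimal $\mathbb{Z}$-graded resolution of $k$ over $\U^-_B$ is taken for that twisted presentation. The $B$-dependence disappears only at the very last step, and for a specific reason: the $G^\vee$-action on a homogeneous class $f$ of $G$-degree $g$ carries the factor $\omega(g)\,B(\omega,g)/B(g,\omega)$, and when one tests invariance by acting with $\omega=\kappa(g)$ (the Killing image of the class's own degree), the ratio $B(g,g)/B(g,g)=1$ drops out, leaving $q^{(g,g)}$. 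This is an observation about the final invariance check, not a reduction one can make at the outset. Relatedly, your stated criterion for the vanishing of $H^1$ invariants (``$\alpha\neq 0$ in $G^\vee$'') is not quite the right one; the correct statement is $q^{(\alpha,\alpha)}\neq 1$, i.e.\ $l\nmid 2$ or $l\nmid 2D$, which is where the standing parity and coprimality hypotheses on $l$ enter.

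So: same skeleton as the paper, but the $B$-elimination step you lean on at the start has a genuine gap, and the paper's handling -- compute with the $B$-twisted presentation and let $B$ cancel in the degree-by-itself invariance test -- is the clean way to make the argument go through for arbitrary $B$.
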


As remarked above, the proof of Proposition~\ref{prop:HUB} will be given in Section~\ref{sect:UDK}.  Of course, we are mostly interested in the vanishing of the second cohomology $H^2(\U^{DK}_B)$.  However, vanishing of $H^1(\U^{DK}_B)$ tells us something which will be quite useful in our classification of twists for the small algebra $u_q$.

\begin{corollary}
Consider any form $B$ on $G^\vee$ and twist $J=B+\sum_{\mu\in Q^+}J_\mu$ of the completion $\hat{U}_q$.  Then for any nonvanishing term $J_\gamma$ of minimal $Q$-degree, there exists a unique bounding element $v_\gamma\in \hat{U}_q$.
\end{corollary}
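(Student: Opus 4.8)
The plan is to read this off from the final lemma of Section~\ref{sect:drinfH} together with Proposition~\ref{prop:HUB}; the genuine content sits in those two statements, so what remains is mostly bookkeeping. First I would pass from $J$ to the associated twist $\tilde J = B^{-1}J = 1+\sum_{\mu\in Q^+}\tilde J_\mu$ of $\hat{U}_q^B$, as in the reduction at the start of Section~\ref{sect:506}, using Lemma~\ref{lem:Uset} to identify the gauge $(\hat{U}_q)^\times$-actions on twists of $\hat{U}_q$ and of $\hat{U}_q^B$. Since $B^{-1}\in\C[G]\ot\C[G]$ is $Q$-homogeneous of degree $0$, the minimal nonvanishing term of $\tilde J$ lies in the same degree $\gamma$ and equals $\tilde J_\gamma = B^{-1}J_\gamma$. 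By the final lemma of Section~\ref{sect:drinfH}, $\tilde J_\gamma$ is a $2$-cocycle in $C^\bt(\U^{DK}_B)$ whose class $[\tilde J_\gamma]\in H^2(\U^{DK}_B)$ is precisely the obstruction to finding a homogeneous bounding element $v_\gamma\in U_q^B=U_q$. Proposition~\ref{prop:HUB} gives $H^2(\U^{DK}_B)=0$, so this obstruction vanishes and a bounding element $v_\gamma\in\hat{U}_q$ exists; transporting back by left multiplication by $B$ (which intertwines the gauge actions, again by Lemma~\ref{lem:Uset}) shows that the same $v_\gamma$ bounds $J_\gamma$ for the original twist $J$.

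For uniqueness, suppose $v_\gamma$ and $v_\gamma'$ both bound $\tilde J_\gamma$, i.e.\ both satisfy the coboundary equation~\eqref{eq:514} with right-hand side $\tilde J_\gamma$ when read as functions on $\U^{DK}_B\ot\U^{DK}_B$. Their difference $w=v_\gamma-v_\gamma'$ is then $Q$-homogeneous of degree $\gamma$ and satisfies $\epsilon(X)w(Y)-w(X\cdot_B Y)+w(X)\epsilon(Y)=0$, so $w$ is a $1$-cocycle in $C^\bt(\U^{DK}_B)$. Its class in $H^1(\U^{DK}_B)$ vanishes by Proposition~\ref{prop:HUB}; but the $1$-coboundaries form the image of the scalars $C^0$ under a degree-preserving differential, hence are $Q$-homogeneous of degree $0$, so a coboundary of degree $\gamma\neq 0$ must vanish. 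Therefore $w=0$ and $v_\gamma=v_\gamma'$. Since passing between bounding elements for $J_\gamma$ and for $\tilde J_\gamma$ is a bijection, uniqueness holds in the original formulation as well.

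Finally, because $\gamma\in Q^+$ is fixed, $v_\gamma$ automatically lies in the finite-dimensional space $(U_q)_\gamma\subset\hat{U}_q$, so no question of convergence in the completion arises, and the assertion concerns only the homogeneous term $v_\gamma$ (the full gauging unit $\exp(v_\gamma)$, or $1+\sum_\mu v_\mu$, will not be unique). The proof is essentially formal granting Proposition~\ref{prop:HUB}; the one place meriting a little care is the degree-$0$ shift by $B$ relating twists of $\hat{U}_q$ and of $\hat{U}_q^B$, and the substantive input — the vanishing of $H^1$ and $H^2$ for $\U^{DK}_B$ — is established separately in Section~\ref{sect:UDK}.
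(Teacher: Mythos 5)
Your proof is correct and follows the same route as the paper: existence of $v_\gamma$ from $H^2(\U^{DK}_B)=0$ and uniqueness from $H^1(\U^{DK}_B)=0$, via the lemma at the end of Section~\ref{sect:drinfH} and Proposition~\ref{prop:HUB}. The only cosmetic difference is in the uniqueness step, where the paper deduces injectivity of the map $v_\gamma\mapsto(1\ot v_\gamma)-\Delta^B(v_\gamma)+(v_\gamma\ot 1)$ directly, while you argue via the $Q$-grading that a $1$-coboundary of degree $\gamma\neq 0$ must vanish; both observations amount to the same thing since the image of $d^0$ is concentrated in degree $0$.
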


\begin{proof}
Vanishing of $H^2(\U^{DK}_B)$ implies the existence of such a $v_\gamma$.  Vanishing of $H^1(\U^{DK}_B)$ tells us that the map
\[
v_\gamma\mapsto
(1\ot v_\gamma)-\Delta^B(v_\gamma)+(v_\gamma\ot 1)
\]
is injective.  Hence the bounding element $v_\gamma$ is unique.
\end{proof}

As a consequence of Proposition~\ref{prop:HUB} we can deduce a classification of cocycle twists for $\U^{DK}$.  Note that we have a canonical map $\mathrm{CoTw}(\C[G^\vee])\to \mathrm{CoTw}(\U^{DK})$ given by restricting along the Hopf projection
\[
\U^{DK}\ot \U^{DK}\to \C[G^\vee]\ot \C[G^\vee].
\]
Recall also the identification $\mathrm{CoTw}(\C[G^\vee])=\mathrm{Alt}(G^\vee)$ of Section~\ref{sect:twists}.  We expect that the following result is known, although we do not know a reference.

\begin{proposition}\label{prop:triv1}
For the modified De Concini-Kac algebra $\U^{DK}$ the restriction map $\mathrm{Alt}(G^\vee)\to \mathrm{CoTw}(\U^{DK})$ is a bijection.
\end{proposition}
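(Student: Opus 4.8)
The plan is to prove injectivity and surjectivity of the restriction map $\mathrm{Alt}(G^\vee)\to\mathrm{CoTw}(\U^{DK})$ separately, using Proposition~\ref{prop:HUB} (vanishing of $H^1$ and $H^2$ of $\U^{DK}_B$) as the key input. Throughout I will work on the dual side, exploiting the identification of $\U^{DK}$ with the continuous dual of $\hat U_q$ from Lemma~\ref{lem:DKD}, so that cocycle twists $\sigma$ for $\U^{DK}$ correspond to Drinfeld twists $J$ for $\hat U_q$, and the restriction map $\mathrm{Alt}(G^\vee)\to\mathrm{CoTw}(\U^{DK})$ becomes, under $\mathrm{Alt}(G^\vee)=\mathrm{Tw}(\C[G])$ (Proposition~\ref{lem:TwG}) and the embedding $\C[G]\otimes\C[G]\hookrightarrow \hat U_q\,\hat\otimes\,\hat U_q$, the map $B\mapsto B$ sending a grouplike twist of $\hat U_q$ to its gauge class. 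So the assertion to prove is: every twist of $\hat U_q$ is gauge equivalent to a (unique, up to the equivalence already present in $\mathrm{Alt}(G^\vee)$) grouplike twist $B\in\C[G]\otimes\C[G]$.

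For \emph{surjectivity onto a full set of representatives}, equivalently that every twist $J$ of $\hat U_q$ is gauge equivalent to its degree-$0$ term: by Lemma~\ref{lem:469} we may assume $J=B+\sum_{\mu\in Q^+}J_\mu$ with $B$ an alternating form on $G^\vee$, and by Lemma~\ref{lem:Uset} (multiplying by $B^{-1}$) it suffices to show any twist $J=1+\sum_{\mu\in Q^+}J_\mu$ of $\hat U_q^B$ is gauge equivalent to $1$. I proceed by an inductive elimination: if $J\neq 1$, pick $\gamma$ of minimal $Q^+$-degree with $J_\gamma\neq 0$; the last lemma of Section~\ref{sect:drinfH} says $J_\gamma$ is a $2$-cocycle in $C^\bt(\U^{DK}_B)$, and Proposition~\ref{prop:HUB} gives $H^2(\U^{DK}_B)=0$, so there is a bounding element $v_\gamma\in\hat U_q$ solving~\eqref{eq:cobdry}. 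Setting $v=\exp(v_\gamma)$, Lemma~\ref{lem:bleh} shows $(v\cdot J)_\gamma=0$ and that no new terms are introduced in degrees $\nu$ with $J_{\nu'}=0$ for all $\nu'\le\nu$; in particular the set of degrees supporting $v\cdot J - 1$ is strictly smaller in the sense that $\gamma$ is removed and nothing below the old support is added. A standard limit argument — carried out degree-by-degree along the grading, using completeness of $\hat U_q$, exactly as the terms are being pushed to arbitrarily high $\mathbb{Z}$-degree — produces a single unit $v\in\hat U_q$ with $v\cdot J=1$. This gives surjectivity.

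For \emph{injectivity}, suppose $B$ and $B'$ are alternating forms on $G^\vee$ (viewed as grouplike twists of $\hat U_q$) that are gauge equivalent as twists of $\hat U_q$, say $B'=v\cdot B$ for a unit $v\in\hat U_q$. By Lemma~\ref{lem:Uset}, multiplying through by $B^{-1}$, this says $v\cdot 1 = B^{-1}B' =: B''$ is a grouplike twist of $\hat U_q^B$ that is gauge trivial via $v$, i.e. $B''=\Delta^B(v)(v^{-1}\otimes v^{-1})$ and $B''\in\C[G]\otimes\C[G]$. Writing $v=v_0+\sum_{\mu\in Q^+}v_\mu$ and comparing $Q$-degrees, the degree-$0$ part gives $B''=\Delta^B(v_0)(v_0^{-1}\otimes v_0^{-1})$ with $v_0\in\C[G]^\times$, so $B''$ and $1$ are already gauge equivalent as twists of $\C[G]$; hence $B$ and $B'$ lie in the same class of $\mathrm{Alt}(G^\vee)$ under the equivalence defining that set (indeed, by Proposition~\ref{lem:TwG} distinct alternating forms are never gauge equivalent, so $B=B'$). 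One should also note $v_0^{-1}$ exists since $\C[G]$ is semisimple and $v_0$ is the unit part of a unit; the positive-degree parts $v_\mu$ are then forced but irrelevant. This completes injectivity, and with it the bijection.

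\textbf{Main obstacle.} The genuinely non-routine point is the passage from \emph{eliminating one term at a time} to \emph{a single gauge unit $v$ trivializing $J$ entirely} — i.e. the convergence/limit argument in the completion $\hat U_q$. Lemma~\ref{lem:bleh} is designed precisely to make this work: because gauging away $J_\gamma$ never reintroduces terms at or below the bottom of the current support, the "bottom degree" of $J-1$ strictly increases (in the $\mathbb{Z}$-grading $|?|$) after enough steps, so the partial products of the $\exp(v_\gamma)$'s converge in $\hat U_q\,\hat\otimes\,\hat U_q$ and the limit does the job. I would want to state this convergence carefully, tracking that at each stage only finitely many new terms appear in each fixed degree. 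Everything else — that $J_\gamma$ is a cocycle, that $H^2(\U^{DK}_B)$ vanishes, that the degree-$0$ comparison forces equality in $\mathrm{Alt}(G^\vee)$ — is either quoted from earlier in the paper or is immediate bookkeeping.
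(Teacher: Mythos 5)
Your proposal is correct and follows essentially the same strategy as the paper: dualize to Drinfeld twists for $\hat{U}_q$, reduce to twists of the form $1+\sum_{\mu\in Q^+}J_\mu$ for $\hat{U}_q^B$ via Lemmas~\ref{lem:469} and~\ref{lem:Uset}, eliminate homogeneous terms inductively using the vanishing $H^2(\U^{DK}_B)=0$ (and $H^1=0$ for uniqueness of bounding elements) together with Lemma~\ref{lem:bleh}, and take the limit in the completion. Your injectivity argument (comparing degree-$0$ parts and invoking Proposition~\ref{lem:TwG}) is a correct unpacking of what the paper states tersely as ``injectivity can be deduced from Lemma~\ref{lem:TwG}.''
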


\begin{proof}
The map $\mathrm{CoTw}(\U^{DK})\to \mathrm{Tw}(\hat{U}_q)$ given by the identification $\hat{U}_q=(\U^{DK})^\ast$ is a bijection.  We show that the map $\mathrm{Alt}(G^\vee)\to \mathrm{Tw}(\hat{U}_q)$ is an isomorphism.
\par

Consider a twist $J'$ for $\hat{U}_q$.  After gauging by an element in $\C[G]$ we may assume the degree zero term for $J'$ is a form $B$ on $G^\vee$.  Consider the associated twist $J=B^{-1}J'=1+\sum_{\mu\in Q^+}J_\mu$ for $U^B_q$.
\par

Take an arbitrary enumeration $\{\mu_1,\mu_2,\dots\}$ of $Q^+$ which is compatible with the total ordering on $\mathbb{Z}$.  Specifically, we demand $|\mu_i|\leq |\mu_j|$ whenever $i\leq j$.  Define inductively $J(0)=J$ and $J(i)=\exp(v_{\mu_i})\cdot J(i-1)$, where $v_{\mu_i}$ is the unique bounding element for $J(i-1)_{\mu_i}$.  Then, by Lemma~\ref{lem:bleh}, the (convergent) ordered product $v=\prod_{i=1}^\infty \exp(v_{\mu_i})$ will be such that $v\cdot_B J=1$.  Hence, by Lemma~\ref{lem:Uset}, we have $v\cdot J'=B$.  So we see that the map $\mathrm{Alt}(G^\vee)\to \mathrm{Tw}(\hat{U}_q)$ is surjective.  Injectivity can be deduced from Lemma~\ref{lem:TwG}.
\end{proof}

\begin{remark}
The proof of Proposition~\ref{prop:HUB} works perfectly well for the non-modified De Concini-Kac Borel algebra.  Hence we will have vanishing of cohomologies $H^1(U^{DK}_T)=H^2(U^{DK}_T)=0$, when $T$ is a form on $G$.  We therefore get a bijection
\[
\mathrm{Alt}(G)\to \mathrm{CoTw}(U^{DK}_T),
\]
as above.  The role of $\hat{U}_q$ in this case is be played by a modified algebra $\hat{\U}_q$.
\end{remark}

\subsection{Cohomology for small quantum Borel algebras}
\label{sect:HuB}

Let us now consider the small algebra $u_q=u_q(\b)$.  Note that the cohomologies $H^i((u^\ast_q)_B)$ will naturally be graded by the root lattice.  One can see this by observing that the $Q$-grading on $u_q^B$ induces a $Q$-grading on the complex
\[
C^\bt\left((u^\ast_q)_B\right)=0\to k\to \Hom_k((u^\ast_q)_B,k)\to \Hom_k((u^\ast_q)_B\ot (u^\ast_q)_B,k)\to\cdots
\]
\[
=0\to k\to u_q^B\to u_q^B\ot u_q^B\to \cdots.
\]
The next result follows by~\cite{ginzburgkumar93} in the specific case $B=1$.  As with Proposition~\ref{prop:HUB}, we provide the proof in Section~\ref{sect:UDK}.

\begin{proposition}[{cf.~\cite[Thm. 2.5]{ginzburgkumar93}}]\label{prop:HuB}
Let $B$ be any bilinear form on $G^\vee$.  Let $\n^{(l)}=\oplus_{\mu\in \Phi^+}\C e_\mu$ denote the positive nilpotent subalgerba in $\g$ equipped with the $Q$-grading $\deg_Q(e_\mu)=l\mu$.  Then we have $H^1\left((u_q^\ast)_B\right)=0$ and $H^2((u_q^\ast)_B)=\n^{(l)}$ as $Q$-graded vector spaces.
\end{proposition}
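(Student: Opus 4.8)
The plan is to identify the cohomology of the twisted dual $(u_q^\ast)_B$ with that of the untwisted dual, and then to invoke the known computation of $H^\bullet(u_q(\b_-)) = H^\bullet(u_q^\ast)$ in the appropriate low degrees.  First I would observe that $(u_q^\ast)_B$ and $u_q^\ast$ have the same coalgebra (only the multiplication has changed), and that a cocycle twist by a form $B$ pulled back from the grouplikes $G^\vee$ does not change the isomorphism class of the category of comodules in any essential way — more precisely, I would argue that twisting the algebra $u_q^\ast$ by the lazy cocycle $B$ induced from $\C[G^\vee]$ gives a Hopf algebra $(u_q^\ast)_B$ with $\operatorname{corep}((u_q^\ast)_B) \simeq \operatorname{corep}(u_q^\ast)$ as abelian categories (a cocycle twist is an equivalence of categories of comodules that merely changes the tensor structure), whence $\Ext$ groups — in particular $H^i((u_q^\ast)_B) = \Ext^i_{(u_q^\ast)_B}(k,k)$ — are unchanged as ungraded vector spaces.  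Since $B$ is $Q$-homogeneous of degree $0$, the $Q$-grading on $\Ext$ is preserved by this identification, so it suffices to treat the case $B = 1$.

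Next, with $B=1$, I would use the identification $u_q^\ast \cong u_{q^{-1}}(\b_-)^{\mathrm{cop}}$ (or, via the $R$-matrix argument already used in the sketch proof of Lemma~\ref{lem:DKD} in the factorizable case, $u_q^\ast \cong u_q(\b_-)$ up to a $\mathrm{cop}$), so that $H^\bullet((u_q^\ast)) \cong H^\bullet(u_q(\b))$ up to the relabeling of roots.  Then I would quote Ginzburg--Kumar~\cite{ginzburgkumar93}, whose Theorem 2.5 computes $H^\bullet(u_q(\b))$: the even part is a polynomial ring on generators in cohomological degree $2$ indexed by the positive roots $\Phi^+$, and the odd degrees (in particular degree $1$) vanish.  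In cohomological degree $1$ this gives $H^1 = 0$; in degree $2$ it gives a vector space with basis $\{e_\mu : \mu \in \Phi^+\}$, one generator per positive root.  The remaining point is to pin down the $Q$-grading on these degree-$2$ classes.  Here I would track the grading through the bar/Koszul resolution: the degree-$2$ generator associated to a positive root $\mu$ is dual to the class of $E_\mu^l = 0$ (the relation that "used to be" nonzero in the big algebra), and since $\deg_Q(E_\mu) = \mu$ this class sits in $Q$-degree $l\mu$.  This matches the claimed grading $\deg_Q(e_\mu) = l\mu$ on $\n^{(l)}$, giving $H^2((u_q^\ast)_B) \cong \n^{(l)}$ as $Q$-graded spaces.

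The main obstacle I anticipate is the determination of the $Q$-grading, rather than the dimension count, which is essentially a citation.  Getting the grading right requires a careful choice of resolution: the untwisted Ginzburg--Kumar argument builds a small resolution of $k$ over $u_q(\b)$ whose terms in low homological degree are governed by the generators $K_\alpha - 1$, $E_\alpha$ (degree $1$) and the relations (quantum Serre relations, $E_\mu^l$, the commutation relations with grouplikes).  One must check that under the duality $u_q^\ast \cong u_q(\b_-)$ (which reverses the $Q$-grading, sending $(u_q)_{-\mu}^\ast$ to $(u_q^\ast)_\mu$), the degree-$2$ classes land in $Q$-degree $+l\mu$ and not, say, $-l\mu$ or $l\alpha$ for $\alpha$ simple.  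The cleanest route is probably to compute $H^{\leq 2}$ directly from the cobar complex $C^\bullet((u_q^\ast)_B) = (0 \to k \to u_q^B \to u_q^B\ot u_q^B \to \cdots)$ displayed in Section~\ref{sect:HuB}: the degree-$1$ cocycles are the $Q$-homogeneous primitives, the coboundaries are the grouplike-type elements $g - 1$, and a Hilbert-series / PBW-basis comparison (using Lusztig's PBW basis for $u_q$, which is available in each $Q$-degree) forces $H^1 = 0$ and pins $H^2$ to exactly the span of the "missing" top powers $E_\mu^{l}$ in degree $l\mu$.  Since this computation is somewhat involved and is a digression from the classification of twists, I would, as the authors do, defer the details to Section~\ref{sect:UDK} and here only state the result with the sketch above.
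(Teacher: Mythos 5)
Your reduction to $B=1$ is not rigorous as stated, for two reasons. First, $\Ext^i_{(u_q^\ast)_B}(k,k)$ lives in the \emph{module} category of the algebra $(u_q^\ast)_B$, and cocycle twisting changes the multiplication; what a cocycle twist literally preserves is the \emph{co}algebra, hence the comodule category and $\mathrm{Cotor}$, not module $\Ext$. So the step ``$\mathrm{corep}$ is unchanged, whence $\Ext$ is unchanged'' is a non-sequitur. Second, $B$ is not a lazy cocycle: a lazy cocycle $\sigma$ gives $H_\sigma=H$ as Hopf algebras, which would make the paper's entire classification vacuous, and Lemma~\ref{lem:relations} shows explicitly that $(u_q^\ast)_B$ has genuinely different commutation and Serre relations from $u_q^\ast$. (The independence of $B$ is in fact true, and can be justified by an explicit $Q$-degree-$0$ chain isomorphism between the bar complexes of $u_q^B$ and $u_q$ built from powers of $B$, but that is comparable in effort to a direct computation and is not what the paper does.) Your subsequent identification $u_q^\ast\cong u_{q^{-1}}(\b_-)^{\mathrm{cop}}$ also assumes more than the proposition allows: the $R$-matrix argument in the sketch of Lemma~\ref{lem:DKD} applies when the Killing map $\kappa\colon G\to G^\vee$ is an isomorphism, but the proposition (and Theorem B, which tracks this) is stated without that assumption, and when $\kappa$ is not an isomorphism $u_q^\ast=u^-\rtimes\C[G^\vee]$ and $u_q(\b_-)=u^-\rtimes\C[G]$ need not be isomorphic Hopf algebras, so the Ginzburg--Kumar answer for the latter does not transfer automatically.

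The ``cleanest route'' you flag at the end — compute directly from a presentation — is essentially the paper's argument and is where all the content lies, so you should not defer it. The paper realizes $(u_q^\ast)_B\cong u^-_B\rtimes\C[G^\vee]$ (Lemma~\ref{lem:756}), exhibits a minimal presentation of $u^-_B$ with generators $\bar X_\alpha$ in $Q$-degree $-\alpha$ and relations the $B$-deformed Serre relations together with the powers $\bar X_\mu^l$ in $Q$-degree $-l\mu$ (the corollary following Lemma~\ref{lem:756}), reads off $H^1(u^-_B)$ and $H^2(u^-_B)=V^\ast\oplus W^\ast$ from the minimal free resolution, and then passes to $G^\vee$-invariants. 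The $G^\vee$-eigenvalues on $H^1$ and on the Serre part $V^\ast$ are powers $q^{(g,g)}$ for the relevant degrees $g$, and the restrictions on $l$ imposed in Section~\ref{sect:uq} are exactly what force these to be $\neq 1$; meanwhile $W^\ast$ sits in $Q$-degrees $l\Phi^+\subset lQ$, on which $G^\vee$ acts trivially. This gives $H^1=0$ and $H^2=W^\ast\cong\n^{(l)}$ with the $Q$-grading coming for free, no duality unwinding and no appeal to a $B=1$ reduction.
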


The identification $H^2((u^\ast_q)_B)=\n^{(l)}$ suggests that there should be non-trivial twists $J$ of the form $J=B+J_{l\mu}+(\mathrm{higher\ deg\ terms})$ for each $\mu\in \Phi^+$.  Indeed, we can construct such a twist, for $\mu=\alpha$ for example, via gauge transformation in $\hat{U}_q$, $J=\exp(E_\alpha^{(l)})\cdot B$.  (See Lemma~\ref{lem:vJ} below.)  In Section~\ref{sect:classTw} we will show that in fact {\it all} homogeneous classes in $H^2((u^\ast_q)_B)$ arise as, and can be eliminated by, gauge transformations by such exponents $\exp(\lambda E^{(l)}_\alpha)$ in $\hat{U}_q$.

\section{Classifying twists for $u_q(\b)$}
\label{sect:classTw}

The primary result of this section is Theorem~\ref{thm:twists} below.  Theorem~\ref{thm:twists}, along with some information from~\cite{uqJ}, will imply Theorem A from the introduction.  Recall our notation $u_q=u_q(\b)$, $U_q=U_q(\b)$.

\subsection{The gauge group $\mathbb{V}$}
\label{sect:V}

Let $\mathbb{V}_{small}$ and $\mathbb{V}_{dp}$ be the following two subgroups of $(\hat{U}_q)^\times$:
\[
\mathbb{V}_{small}:=\langle 1+v_+:v_+\in (u_q)_{\geq 1}\rangle,
\]
\[
\mathbb{V}_{dp}:=\langle \exp(\lambda E_\alpha^{(l)}):\alpha\in \Gamma,\ \lambda\in\C\rangle.
\]
Note that $V_{small}\subset (u_q)^\times$.

\begin{definition}
Let $\mathbb{V}$ denote the subgroup of $(\hat{U}_q)^{\times}$ generated by $\mathbb{V}_{small}$ and $\mathbb{V}_{dp}$.
\end{definition}

We can write any $v\in \mathbb{V}$ as a sum $v=1+\sum_{\mu\in Q^+}v_\mu$ with each $v_\mu$ homogenous of $Q$-degree $\mu$, since we can do so for each of the generators of $\mathbb{V}$.  It will be helpful to have the following lemma from~\cite{lusztig89}.

\begin{lemma}[{\cite[Lem. 4.5]{lusztig89}}]\label{lem:exp}
For each $\alpha\in \Gamma$ and $\lambda\in \mathbb{C}$ the derivation $\mathrm{ad}_{\lambda E^{(l)}_\alpha}:\hat{U}_q\to \hat{U}_q$ restricts to a nilpotent derivation on $u_q$, and the exponential $\exp(\mathrm{ad}_{\lambda E^{(l)}_{\alpha}})$ restricts to a well-defined algebra automorphism of $u_q$.
\end{lemma}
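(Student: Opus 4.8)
The statement to prove is Lusztig's Lemma~\cite[Lem. 4.5]{lusztig89}: for each simple root $\alpha$ and scalar $\lambda$, the derivation $\mathrm{ad}_{\lambda E_\alpha^{(l)}}$ on $\hat{U}_q$ restricts to a nilpotent derivation on the subalgebra $u_q$, so that $\exp(\mathrm{ad}_{\lambda E_\alpha^{(l)}})$ is a well-defined algebra automorphism of $u_q$.

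\medskip

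\textbf{Proposed proof.} The plan is to reduce everything to the action of $\mathrm{ad}_{E_\alpha^{(l)}}$ on the algebra generators of $u_q$, namely the $K_\beta$ and the $E_\beta$ for $\beta\in\Gamma$. First I would observe that since $E_\alpha^{(l)}$ has $Q$-degree $l\alpha$, the operator $D:=\mathrm{ad}_{\lambda E_\alpha^{(l)}}$ raises $Q$-degree by $l\alpha$; because $u_q$ is a finite-dimensional $Q$-graded algebra (its degrees lie in a finite set, bounded by the support of the PBW basis), any degree-raising operator on $u_q$ is automatically nilpotent, provided we first know that $D$ maps $u_q$ into itself. So the real content is: (a) $D(u_q)\subseteq u_q$, and (b) $D$ is a derivation of $u_q$ — then nilpotence is free and $\exp(D)=\sum_{n\ge 0}D^n/n!$ is a finite sum, hence a well-defined linear endomorphism, which is an algebra map because the exponential of a (locally nilpotent) derivation always is.

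\medskip

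For (b), $D$ is a derivation of $\hat{U}_q$ by construction (inner derivations are derivations), and its restriction to any subalgebra preserved by $D$ is again a derivation; so (b) follows from (a). For (a) it suffices to check that $D$ sends each algebra generator of $u_q$ into $u_q$, since then $D(u_q)\subseteq u_q$ follows from the Leibniz rule applied to arbitrary monomials in the generators (the relevant computation only ever uses that $u_q$ is a subalgebra of $\hat U_q$). Now $D(K_\beta)=\lambda[E_\alpha^{(l)},K_\beta] = \lambda(1-q^{-l(\alpha,\beta)})K_\beta E_\alpha^{(l)}$ by the commutation relation between $K_\beta$ and $E_\alpha^{(l)}$; but $q$ has order $l$ and $(\alpha,\beta)$ is an integer, so $q^{-l(\alpha,\beta)}=1$ and $D(K_\beta)=0$. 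For $D(E_\beta)=\lambda[E_\alpha^{(l)},E_\beta]$ one invokes the structure of the commutation relations in Lusztig's algebra: when $\beta=\alpha$ one has $[E_\alpha^{(l)},E_\alpha]=0$ (the $E_\alpha^{(l)}$ and $E_\alpha$ generate a copy of the quantum $\mathfrak{sl}_2$ divided-power algebra, in which $E^{(l)}$ and $E^{(1)}$ commute), and when $\beta\ne\alpha$ the bracket $[E_\alpha^{(l)},E_\beta]$ is, by the higher quantum Serre relations / the explicit formulas of~\cite{lusztig90,lusztig90II}, a linear combination of products of the $E_\gamma$'s of total $Q$-degree $l\alpha+\beta$ with coefficients that are polynomials in the $q_\alpha$-integers $[j]_{q_\alpha}$ for $0\le j\le l$ — equivalently it lies in the subalgebra $u_q^{>0}$ generated by the $E_\gamma$; in particular it lies in $u_q$. (One must keep track of one subtlety: the $E_\mu$ for non-simple $\mu\in\Phi^+$ are built from the $E_\beta$, $\beta\in\Gamma$, via the braid group, so ``lies in $u_q$'' here means ``lies in the subalgebra generated by the simple $E_\beta$ and $K_\beta$,'' which is indeed all of $u_q$.)

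\medskip

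I expect the main obstacle to be step (a) in the case $\beta\ne\alpha$: one must verify cleanly that the bracket $[E_\alpha^{(l)},E_\beta]$, a priori an element of $\hat{U}_q$ of degree $l\alpha+\beta$, actually lies in the finite-dimensional subalgebra $u_q$ rather than involving further divided powers. The cleanest route is probably to cite Lusztig's explicit form of the adjoint action of $E_\alpha^{(l)}$ on the generators (this is essentially the content of the higher-order Serre relations, e.g.\ \cite[§4, §8]{lusztig90II}), noting that the $l$-th divided power acts on $E_\beta$ through the same formulas that define $u_q$'s Serre relations once one reduces $q_\alpha$-binomials modulo $l$; alternatively, one can deduce it abstractly from the short exact sequence $1\to u_q(\b)\to U_q(\b)\to U(\n)\to 1$ recalled in Section~\ref{sect:Lus}, using that $u_q(\b)$ is normal so $\mathrm{ad}$ of any element of $U_q(\b)$ preserves $u_q(\b)$, hence in particular $\mathrm{ad}_{E_\alpha^{(l)}}$ does. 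This last observation is in fact the slickest argument for (a) and (b) together: normality of $u_q(\b)$ in $U_q(\b)$ gives $\mathrm{ad}_{U_q(\b)}(u_q(\b))\subseteq u_q(\b)$ directly, and then the degree argument of the first paragraph delivers nilpotence and hence the automorphism $\exp(\mathrm{ad}_{\lambda E_\alpha^{(l)}})$ of $u_q$.
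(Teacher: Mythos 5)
The paper does not actually supply a proof of this lemma: it cites \cite[Lem.\ 4.5]{lusztig89} for the statement that $\mathrm{ad}_{\lambda E_\alpha^{(l)}}$ restricts to a nilpotent derivation of $u_q$, and observes that exponentiating a nilpotent derivation always gives an algebra automorphism. Your degree-raising argument for nilpotence (given invariance of $u_q$), your computation that $D(K_\beta)=0$, and your reduction to $D(E_\beta)$ are all correct and fill in more detail than the paper itself does.

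The one genuine issue is in what you call the ``slickest argument.'' Normality of the Hopf subalgebra $u_q(\b)\subset U_q(\b)$ gives stability under the \emph{Hopf} adjoint action $h\mapsto h_{(1)}\,x\,S(h_{(2)})$, not \emph{a priori} under the Lie commutator $[h,x]=hx-xh$. These two agree only when $h$ is primitive, and $E_\alpha^{(l)}$ is not primitive — recall $\Delta(E_\alpha^{(l)})=\sum_{i=0}^{l}q^{-i(l-i)}K_\alpha^i E_\alpha^{(l-i)}\ot E_\alpha^{(i)}$. So the sentence ``normality of $u_q(\b)$ in $U_q(\b)$ gives $\mathrm{ad}_{U_q(\b)}(u_q(\b))\subseteq u_q(\b)$ directly'' is, as stated, a non sequitur. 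The claim can be repaired: the standard identity $hx=\bigl(h_{(1)} x S(h_{(2)})\bigr)h_{(3)}$ together with the explicit coproduct above shows that, for $x\in u_q$,
\[
[E_\alpha^{(l)},x]=\mathrm{ad}^{\mathrm{Hopf}}_{E_\alpha^{(l)}}(x)
+\sum_{j=1}^{l-1}q^{-j(l-j)}\,\mathrm{ad}^{\mathrm{Hopf}}_{K_\alpha^{j}E_\alpha^{(l-j)}}(x)\,E_\alpha^{(j)},
\]
where the $j=l$ summand cancels $xE_\alpha^{(l)}$ because $K_\alpha^{l}=1$. Every term on the right lies in $u_q$: the $j\geq 1$ terms because $K_\alpha^{j}E_\alpha^{(l-j)},E_\alpha^{(j)}\in u_q$ and $u_q$ is a Hopf subalgebra, and the $j=0$ term by normality. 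So the normality route does work, but only after this bridging computation — it is not ``direct'' and you should include it (or simply fall back on the Lusztig citation, which is what the paper does).
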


The first claim follows directly from~\cite{lusztig89}.  The second claim follows from the general fact that exponentiating a nilpotent derivation, of any given algebra, produces an algebra automorphism.

\begin{lemma}\label{lem:vJ}
Fix a twist $B$ for $\C[G]$.  Given any twist $J$ for $u_q^B$, and $v\in \mathbb{V}$, the gauging $(v\cdot J)$ is another twist for $u_q^B$.
\end{lemma}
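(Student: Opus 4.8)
The statement to establish is that the gauge action of $\mathbb{V}\subset(\hat{U}_q)^\times$ preserves the set of twists for $u_q^B$. The plan is to apply the second half of Lemma~\ref{lem:Uset}: that lemma says precisely that if a subgroup $\mathbb{V}\subset(\hat{U}_q)^\times$ preserves the set of twists for $u_q$ (the untwisted algebra) under the gauge action, then it automatically preserves the set of twists for $u_q^B$ as well. So the entire problem reduces to showing that $\mathbb{V}$ preserves twists for $u_q$ itself, i.e. the case $B=1$. Since $\mathbb{V}$ is generated by $\mathbb{V}_{small}$ and $\mathbb{V}_{dp}$, and the set of gauge transformations preserving a given set is closed under composition, it suffices to treat each family of generators separately.

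For $\mathbb{V}_{small}$ the argument is immediate: any element $1+v_+$ with $v_+\in(u_q)_{\geq 1}$ is already a unit of $u_q$, and the gauge action of $u_q^\times$ on twists of $u_q$ visibly lands back in $u_q\hat{\ot}u_q$ — if $J$ is a twist for $u_q$ then $\Delta(v)J(v^{-1}\ot v^{-1})\in u_q\ot u_q$ and satisfies the twist cocycle and counit conditions by the standard general fact that the gauge action by units of a Hopf algebra sends twists to twists (this is recorded, essentially, in Section~\ref{sect:Jsigma}). So the only real content is the family $\mathbb{V}_{dp}$, i.e. the elements $v=\exp(\lambda E_\alpha^{(l)})\in\hat{U}_q$, which are genuinely not units of $u_q$.

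The key step for $\mathbb{V}_{dp}$ is to show that for a twist $J$ of $u_q$ and $v=\exp(\lambda E_\alpha^{(l)})$, the gauged element $v\cdot J=\Delta(v)J(v^{-1}\ot v^{-1})$ still lies in the completed tensor square of $u_q$ (it is automatically again a twist of $\hat U_q$, by the general fact above). Here I would use the formula $\Delta(E_\alpha^{(l)})=\sum_{0\leq i\leq l}q^{-i(l-i)}K_\alpha^i E_\alpha^{(l-i)}\ot E_\alpha^{(i)}$ from Section~\ref{sect:Lus}: modulo the augmentation ideal of $u_q$ in each tensor factor, the divided power $E_\alpha^{(l)}$ is primitive, i.e. $\Delta(E_\alpha^{(l)})\equiv E_\alpha^{(l)}\ot 1+1\ot E_\alpha^{(l)}$ modulo $u_q^+\ot u_q^+$. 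Consequently, conjugation $x\mapsto \Delta(v)\,x\,\Delta(v)^{-1}$ on $\hat U_q\hat\ot\hat U_q$ acts on the two tensor factors by the automorphisms $\exp(\mathrm{ad}_{\lambda E_\alpha^{(l)}})$ of Lemma~\ref{lem:exp}, up to an inner correction by $v^{\pm1}\ot v^{\pm1}$ coming from the $u_q^+\ot u_q^+$ part of $\Delta(v)$; but that correction is exactly cancelled by the $(v^{-1}\ot v^{-1})$ factor in the definition of the gauge action. More precisely, writing $\Delta(v)=(v\ot v)\cdot w$ with $w\in 1+(u_q^+\hat\ot u_q^+)$, one has $v\cdot J=(v\ot v)\,w\,J\,(v^{-1}\ot v^{-1})$, and by Lemma~\ref{lem:exp} the conjugation $(v\ot v)(-)(v^{-1}\ot v^{-1})$ restricts to the algebra automorphism $\exp(\mathrm{ad}_{\lambda E_\alpha^{(l)}})^{\ot 2}$ of $u_q\hat\ot u_q$, so $v\cdot J=\big(\exp(\mathrm{ad}_{\lambda E_\alpha^{(l)}})^{\ot 2}\big)(wJ)\in u_q\hat\ot u_q$ since $w,J\in u_q\hat\ot u_q$. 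This is the step requiring the most care: one must verify that $w$ indeed lies in the completed square of the augmentation ideal of $u_q$ (so that Lemma~\ref{lem:exp} applies to it), and that all the products and the infinite sums converge in the $\mathbb Z$-graded completion. Assembling the two cases and invoking Lemma~\ref{lem:Uset} then gives the claim for $u_q^B$ for arbitrary $B$.
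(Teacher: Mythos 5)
Your proof is correct and follows essentially the same route as the paper: reduce to $B=1$ via Lemma~\ref{lem:Uset}, dispose of $\mathbb{V}_{small}$ trivially, and for $v=\exp(\lambda E_\alpha^{(l)})$ split $\Delta(v)$ as $(v\otimes v)$ times a correction $w=1+T\in u_q\otimes u_q$, then invoke Lemma~\ref{lem:exp} to see that conjugation by $v\otimes v$ preserves $u_q\otimes u_q$. The paper places the correction factor $(1+T)$ on the left of $(v\otimes v)$ rather than the right, but since $T$ commutes with $E_\alpha^{(l)}\otimes 1$ and $1\otimes E_\alpha^{(l)}$ this is a purely cosmetic difference.
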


\begin{proof}
We prove the result in the case $B=1$. The case when $B$ is arbitrary will follow by Lemma~\ref{lem:Uset}.
\par

Suppose $B=1$.  Obviously such twists are preserved by the action of $\mathbb{V}_{small}$, so it suffices to prove that $(v\cdot J)$ is a twist for $u_q$ whenever $v$ is a generator $v=\exp(\lambda E_\alpha^{(l)})$ of $\mathbb{V}_{dp}$.  We have $\Delta(\lambda E_\alpha^{(l)})=\lambda E_\alpha^{(l)}\ot 1+1\ot \lambda E^{(l)}_\alpha+T$, where $T$ is a square-zero element in $u_q$ which centralizes $E_\alpha^{(l)}\ot 1$ and $1\ot E_\alpha^{(l)}$.  One can write explicitly $T=\lambda\sum_{i=1}^{l-1} q^{-i(l-i)}K_\alpha^iE_\alpha^{(l-i)}\ot E_\alpha^{(i)}$.  Then we have 
\[
\Delta(v)=\exp(\lambda\Delta(E_\alpha^{(l)}))=(1+ T)(v\ot v).
\]
Hence 
\[
\Delta(v)J(v^{-1}\ot v^{-1})=(1+ T)(v\ot v)J(v^{-1}\ot v^{-1}).
\]
Now we use the equality
\[
(v\ot v)J(v^{-1}\ot v^{-1})=\left((\exp \mathrm{ad}_{\lambda E^{(l)}_\alpha})\ot (\exp\mathrm{ad}_{\lambda E^{(l)}_\alpha})\right)(J),
\]
and Lemma~\ref{lem:exp}, to see that $v\cdot J\in u_q\ot u_q$.
\end{proof}

\begin{lemma}\label{lem:normal}
The subgroup $\mathbb{V}_{small}\subset \mathbb{V}$ is normal.
\end{lemma}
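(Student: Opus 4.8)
The plan is to show that $\mathbb{V}_{small}$ is stable under conjugation by the generators of $\mathbb{V}$, since every element of $\mathbb{V}$ is a product of elements of $\mathbb{V}_{small}$ and elements of the form $\exp(\lambda E_\alpha^{(l)})$. Conjugation by an element of $\mathbb{V}_{small}$ itself obviously preserves $\mathbb{V}_{small}$, so the only thing to check is that $\exp(\lambda E_\alpha^{(l)})\, w\, \exp(-\lambda E_\alpha^{(l)}) \in \mathbb{V}_{small}$ whenever $w = 1 + v_+$ with $v_+ \in (u_q)_{\geq 1}$, and also that the inverse of a generator conjugates $\mathbb{V}_{small}$ into itself (handled symmetrically, replacing $\lambda$ by $-\lambda$).

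The key observation is that conjugation by $\exp(\lambda E_\alpha^{(l)})$ on $\hat U_q$ is precisely the exponentiated adjoint action $\exp(\mathrm{ad}_{\lambda E_\alpha^{(l)}})$, and by Lemma~\ref{lem:exp} this restricts to a well-defined algebra automorphism of $u_q$. Therefore, writing $\Psi := \exp(\mathrm{ad}_{\lambda E_\alpha^{(l)}})\colon u_q \to u_q$, we have $\exp(\lambda E_\alpha^{(l)})\,(1+v_+)\,\exp(-\lambda E_\alpha^{(l)}) = 1 + \Psi(v_+)$. It remains to see that $\Psi(v_+) \in (u_q)_{\geq 1}$, i.e. that $\Psi$ preserves the augmentation ideal, or more precisely that it does not introduce a degree-$0$ term. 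This is immediate from the $Q$-grading: $E_\alpha^{(l)}$ has $Q$-degree $l\alpha$, so $\mathrm{ad}_{\lambda E_\alpha^{(l)}}$ raises $Q$-degree by $l\alpha$, and hence $\Psi$ sends an element of $Q$-degree $\mu$ to a sum of elements of $Q$-degrees $\mu, \mu + l\alpha, \mu + 2l\alpha, \dots$. Since $v_+$ lies in $(u_q)_{\geq 1} = \bigoplus_{\nu \in Q^+}(u_q)_\nu$ and each $\nu + k l\alpha$ with $\nu \in Q^+$ is again in $Q^+$, we get $\Psi(v_+) \in (u_q)_{\geq 1}$. Thus $1 + \Psi(v_+)$ is again a generator of $\mathbb{V}_{small}$ (or at worst a product of such, if one prefers to expand $\Psi(v_+)$ into homogeneous pieces), and conjugation by the generator preserves $\mathbb{V}_{small}$.

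Putting these together: an arbitrary $v \in \mathbb{V}$ is a finite product $v = g_1 \cdots g_n$ where each $g_i$ is either in $\mathbb{V}_{small}$ or of the form $\exp(\pm\lambda_i E_{\alpha_i}^{(l)})$. By induction on $n$, using the two cases above at each step, $v\,\mathbb{V}_{small}\,v^{-1} \subseteq \mathbb{V}_{small}$, so $\mathbb{V}_{small}$ is normal in $\mathbb{V}$. I do not anticipate a genuine obstacle here; the only point requiring a little care is confirming that conjugation by $\exp(\lambda E_\alpha^{(l)})$ genuinely restricts to $u_q$ and preserves the grading in the stated sense, but both are supplied by Lemma~\ref{lem:exp} together with the elementary fact that $\mathrm{ad}_{E_\alpha^{(l)}}$ is homogeneous of positive $Q$-degree. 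One small bookkeeping subtlety is that a generator $1 + v_+$ of $\mathbb{V}_{small}$ need not have $v_+$ homogeneous, but since $\Psi$ is an algebra map and $\mathbb{V}_{small}$ is by definition the subgroup generated by all $1 + v_+$, this causes no difficulty: $1 + \Psi(v_+)$ is of exactly the same form.
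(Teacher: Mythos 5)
Your proof is correct and takes essentially the same route as the paper: both reduce to checking conjugation by the generators $\exp(\lambda E_\alpha^{(l)})$ of $\mathbb{V}_{dp}$, use the identity $\exp(\lambda E_\alpha^{(l)})(1+v_+)\exp(-\lambda E_\alpha^{(l)}) = 1+\exp(\mathrm{ad}_{\lambda E_\alpha^{(l)}})(v_+)$ together with Lemma~\ref{lem:exp}, and observe that $\mathrm{ad}_{\lambda E_\alpha^{(l)}}$ preserves positive degree so the result stays in $\mathbb{V}_{small}$. Your extra remarks about the $Q$-degree being raised by multiples of $l\alpha$ and the non-homogeneity of $v_+$ are fine elaborations of what the paper leaves implicit.
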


\begin{proof}
Consider any $v=1+v_+$ in $\mathbb{V}_{small}$ and any generator $\exp(\lambda E_\alpha^{(l)})$ of $\mathbb{V}_{dp}$.  Note that $\mathrm{ad}_{\lambda E^{(l)}_\alpha}$ preserves the subspace of positive degree elements in $u_q$.  Hence $\exp(\mathrm{ad}_{\lambda E^{(l)}_\alpha})(v_+)$ is of positive degrees.  It follows that the commutator
\[
\exp(\lambda E_\alpha^{(l)})v\exp(-\lambda E_\alpha^{(l)})=1+\exp(\mathrm{ad}_{\lambda E^{(l)}_\alpha})(v_+)
\]
is in $\mathbb{V}_{small}$.  Since $\mathbb{V}$ is generated by $\mathbb{V}_{small}$ and the $\exp(\lambda E_\alpha^{(l)})$, it follows that $\mathbb{V}_{small}$ is normal.
\end{proof}

\subsection{A classification of twists for $u_q(\b)$}

We wish to prove the following result.

\begin{theorem}\label{thm:twists}
Any twist $J$ for $u_q(\b)$ is gauge equivalent to a twist of the form $v\cdot B$, where $v$ is in $\mathbb{V}_{dp}$ and $B$ is an alternating form on $G^\vee$.
\end{theorem}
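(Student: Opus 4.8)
The plan is to start with an arbitrary twist $J$ for $u_q$ and, using Lemma~\ref{lem:469}, reduce to the case $J = 1 + \sum_{\mu \in Q^+} J_\mu$ after passing to $u_q^B$ by left-multiplication by $B^{-1}$ (Lemma~\ref{lem:Uset}); it suffices to show such a $J$ is gauge equivalent, via an element of $\mathbb{V}$, to the trivial twist $1$ for $u_q^B$. The idea is to kill the homogeneous terms $J_\mu$ one degree at a time, proceeding along an enumeration $\{\mu_1, \mu_2, \dots\}$ of $Q^+$ compatible with the $\mathbb{Z}$-grading $|?|$. At each stage we have a twist whose lowest nonvanishing term sits in some degree $\gamma$; this term is a $2$-cocycle for $(u_q^\ast)_B$ by the discussion of Section~\ref{sect:506}. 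By Proposition~\ref{prop:HuB}, $H^2((u_q^\ast)_B) = \n^{(l)}$, concentrated in degrees $l\mu$ for $\mu \in \Phi^+$. So there are two cases: either $\gamma$ is not of the form $l\mu$, in which case $[J_\gamma] = 0$ and by Lemma~\ref{lem:bleh} there is a bounding element $v_\gamma \in u_q$ with $\exp(v_\gamma) \in \mathbb{V}_{small}$ removing the term without disturbing lower degrees; or $\gamma = l\alpha$ for a (positive, but by a Weyl-group/automorphism reduction it is enough to understand) root, and the class $[J_\gamma]$ may be nonzero.

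The heart of the argument is the second case. The key step I would carry out is to show that the gauge transformations $\exp(\lambda E_\alpha^{(l)}) \in \mathbb{V}_{dp}$, acting on the trivial twist $1$ for $u_q^B$, produce twists $\exp(\lambda E_\alpha^{(l)}) \cdot 1$ whose lowest term sits in degree $l\alpha$ and realizes a nonzero multiple of the generator $e_\alpha \in \n^{(l)} = H^2((u_q^\ast)_B)$; this is essentially the content of Lemma~\ref{lem:vJ} together with the explicit formula $\Delta(v) = (1+T)(v\ot v)$ with $T = \lambda \sum_{i=1}^{l-1} q^{-i(l-i)} K_\alpha^i E_\alpha^{(l-i)} \ot E_\alpha^{(i)}$, whose degree-$l\alpha$ piece is a nonzero cocycle. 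More precisely, I would check that $\lambda \mapsto [(\exp(\lambda E_\alpha^{(l)}) \cdot 1)_{l\alpha}]$ surjects onto the line $\C e_\alpha$ (indeed it is linear in $\lambda$ to leading order), and similarly for each positive root $\gamma = l\mu$ using the braid-group-defined $E_\mu^{(l)} \in \hat{U}_q$ and the fact that these still lie in the divided-power algebra so that $\exp(\lambda E_\mu^{(l)})$ acts on $u_q$ by an algebra automorphism. Thus, given a twist whose lowest term is in degree $l\mu$, we may first apply a suitable $\exp(\lambda E_\mu^{(l)}) \in \mathbb{V}_{dp}$ to cancel the cohomology class $[J_{l\mu}]$, reducing $[J_{l\mu}]$ to $0$ and hence to the previous case, where a further element of $\mathbb{V}_{small}$ finishes the job in that degree. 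Crucially, by Lemma~\ref{lem:bleh}(ii) and the observation that $\mathrm{ad}_{\lambda E_\mu^{(l)}}$ preserves positive-degree elements, none of these operations reintroduce terms in degrees $\leq$ the current one.

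To make the one-degree-at-a-time process converge to an honest gauge equivalence, I would argue as in the proof of Proposition~\ref{prop:triv1}: set $J(0) = J$ and inductively $J(i) = w_i \cdot J(i-1)$ where $w_i \in \mathbb{V}$ is the product of the (at most two) elements just described that kill the degree-$\mu_i$ term; by construction $w_i = 1 + (\text{terms of degree} \geq \mu_i)$, so the ordered product $w = \cdots w_2 w_1$ converges in $(\hat{U}_q)^\times$, lies in $\mathbb{V}$ (it is a limit of products of generators), and satisfies $w \cdot J = 1$ in $u_q^B$. Undoing the $B^{-1}$ normalization via Lemma~\ref{lem:Uset} gives $w \cdot J' = B$, i.e. $J'$ is gauge-equivalent to $B$ by an element of $\mathbb{V}$. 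Finally, since $\mathbb{V}_{small} \subset (u_q)^\times$ is normal in $\mathbb{V}$ (Lemma~\ref{lem:normal}) and $\mathbb{V} = \mathbb{V}_{small} \cdot \mathbb{V}_{dp}$, we may absorb the $\mathbb{V}_{small}$-part of $w$ into the gauge equivalence over $u_q$ itself and rewrite $J' \sim_{\text{gauge}} v \cdot B$ with $v \in \mathbb{V}_{dp}$; since $B$ is gauge-equivalent to an alternating form on $G^\vee$ (Lemma~\ref{lem:469} / Proposition~\ref{lem:TwG}), and conjugating $v$ past this gauge element keeps it in $\mathbb{V}_{dp}$ up to a $\mathbb{V}_{small}$-factor, we obtain the stated normal form. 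The main obstacle is the second paragraph: verifying that the $\exp(\lambda E_\mu^{(l)})$ hit every nonzero class in $H^2((u_q^\ast)_B) = \n^{(l)}$ — in particular pinning down the leading term of $\exp(\lambda E_\mu^{(l)}) \cdot 1$ and its image in cohomology for non-simple $\mu$, where the $E_\mu^{(l)}$ are only implicitly defined via the braid group and the comultiplication formula is less transparent than in the simple-root case.
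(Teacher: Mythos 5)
Your overall architecture agrees with the paper's (reduce to $J=1+\sum J_\mu$ for $u_q^B$, kill terms degree by degree using $H^2((u_q^\ast)_B)=\n^{(l)}$, then absorb the $\mathbb{V}_{small}$-factor by normality), but there are two concrete gaps in the central step, and you flag the second one yourself without identifying the fix.

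First, for a non-simple positive root $\mu$ you propose to use $\exp(\lambda E_\mu^{(l)})$ and assert that it lies in $\mathbb{V}_{dp}$. That is not immediate: $\mathbb{V}_{dp}$ is \emph{defined} as the subgroup of $(\hat{U}_q)^\times$ generated by $\exp(\lambda E_\alpha^{(l)})$ for $\alpha\in\Gamma$ only, and it is not obvious (and not used in the paper) that the braid-group-defined $\exp(\lambda E_\mu^{(l)})$ for composite $\mu$ belong to this subgroup. Since the theorem asserts $v\in\mathbb{V}_{dp}$, and since Theorem~\ref{thm:unip} needs to match these gauge elements against the $\mathbb{U}$-action generated by simple-root exponentials, staying inside $\mathbb{V}_{dp}$ is essential. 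The paper avoids the issue via Lemma~\ref{lem:vmu}: it builds one-parameter families $\mathbf{v}_{l\mu}(\lambda)\in\mathbb{V}_{dp}$ by \emph{iterated group commutators} $\exp(\lambda E_\alpha^{(l)})v\exp(-\lambda E_\alpha^{(l)})v^{-1}$ of the simple-root exponentials, and checks by induction on height that the leading $Q$-degree term of $\mathbf{v}_{l\mu}(\lambda)$ is $\lambda\,[E_{\alpha_1}^{(l)},[\dots[E_{\alpha_{n-1}}^{(l)},E_{\alpha_n}^{(l)}]\dots]]$. This sidesteps both the membership question and the opaque comultiplication formula for the braid-group $E_\mu^{(l)}$.

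Second, you argue that the degree-$l\alpha$ piece of $\exp(\lambda E_\alpha^{(l)})\cdot 1$ is ``a nonzero cocycle'' and want to conclude it represents a nonzero class, surjecting onto $\C e_\alpha\subset H^2((u_q^\ast)_B)$; you acknowledge this is hard to pin down for non-simple $\mu$. The paper's clean mechanism here is Lemma~\ref{lem:dBv}: if $v_\gamma\in\hat{U}_q\setminus u_q$ and $d_B(v_\gamma)\in u_q\ot u_q$, then $[d_B(v_\gamma)]\neq 0$ in $H^2((u_q^\ast)_B)$, because a bounding element in $u_q$ would produce a nonzero class in $H^1(\U^{DK}_B)$, contradicting Proposition~\ref{prop:HUB}. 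Combined with the fact (from Lemma~\ref{lem:vmu}) that the leading term $v_{l\mu}$ maps to $e_\mu\neq 0$ in $U(\n)$, hence $v_{l\mu}\notin u_q$, this gives nonvanishing of the class without any explicit cocycle computation. Incorporating Lemmas~\ref{lem:dBv} and~\ref{lem:vmu} would close both gaps; otherwise your ``main obstacle'' paragraph is an accurate self-diagnosis that the argument as stated does not close. (A minor point: since $u_q\ot u_q$ vanishes above $\mathbb{Z}$-degree $N^2$, the paper terminates in finitely many steps rather than via the infinite convergent product you borrow from Proposition~\ref{prop:triv1}; either works here.)
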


We break the proof down into a sequence of smaller results.  For a given form $B$ on $G^\vee$ we let $d_B$ denote the differential on either of the complexes $C^\bt(\U^{DK}_B)$ or $C^\bt((u_q^\ast)_B)$.  (Since the restriction map $C^\bt((u_q^\ast)_B)\to C^\bt(\U^{DK}_B)$ is an inclusion of cochains the notation is unambiguous.)  Recall that in each degree $i$ we have $C^i((u_q^\ast)_B)=(u_q^B)^{\ot i}$ and $C^i(\U^{DK}_B)=(\hat{U}_q^B)^{\hat{\ot}i}$.

\begin{lemma}\label{lem:dBv}
Let $B$ be a bilinear form on $G^\vee$.  If an element $v_\gamma\in \hat{U}_q\setminus u_q$ is such that $d_B(v_\gamma)\in u_q\ot u_q$, then the corresponding class $[d_B(v_\gamma)]\in H^2((u_q^\ast)_B)$ is nonzero.
\end{lemma}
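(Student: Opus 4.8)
Let $B$ be a bilinear form on $G^\vee$. If $v_\gamma \in \hat U_q \setminus u_q$ is such that $d_B(v_\gamma) \in u_q \otimes u_q$, then $[d_B(v_\gamma)] \in H^2((u_q^\ast)_B)$ is nonzero.

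The plan is to argue by contradiction: suppose $[d_B(v_\gamma)] = 0$ in $H^2((u_q^\ast)_B)$. Then there is some $w \in u_q^B = C^1((u_q^\ast)_B)$ with $d_B(w) = d_B(v_\gamma)$. Since $w$ and $v_\gamma$ both live in the bigger complex $C^\bullet(\U^{DK}_B)$ (using the inclusion of cochain complexes $C^\bullet((u_q^\ast)_B) \hookrightarrow C^\bullet(\U^{DK}_B)$ noted just before the lemma), the element $v_\gamma - w$ lies in $C^1(\U^{DK}_B) = \hat U_q^B$ and satisfies $d_B(v_\gamma - w) = 0$, i.e. it is a $1$-cocycle for $\U^{DK}_B$. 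By Proposition~\ref{prop:HUB} we have $H^1(\U^{DK}_B) = 0$, and $C^0(\U^{DK}_B) = k$ consists of scalars with $d_B$ identically zero on it, so the only $1$-cocycle is $0$; hence $v_\gamma - w = 0$, giving $v_\gamma = w \in u_q$. This contradicts the hypothesis $v_\gamma \notin u_q$, so $[d_B(v_\gamma)]$ must be nonzero.

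The steps in order are: (1) translate "$[d_B(v_\gamma)] = 0$" into the existence of a bounding $1$-cochain $w \in u_q^B$; (2) pass both $v_\gamma$ and $w$ into the ambient complex $C^\bullet(\U^{DK}_B)$ via the inclusion $u_q \subset \hat U_q$ and its dual-side incarnation, observing that $d_B$ is compatible with this inclusion (this is exactly the remark in the excerpt that the notation $d_B$ is unambiguous); (3) invoke $H^1(\U^{DK}_B) = 0$ from Proposition~\ref{prop:HUB} together with the fact that $d_B$ kills the scalars in degree $0$, to conclude $v_\gamma - w = 0$; (4) read off the contradiction. One should be slightly careful in step (2) that $d_B(v_\gamma) \in u_q \otimes u_q$ is genuinely needed only to make sense of the cohomology class living in $H^2((u_q^\ast)_B)$ in the first place — for the argument itself what matters is that $v_\gamma$ is an element of $\hat U_q^B = C^1(\U^{DK}_B)$ and that its image under $d_B$ agrees with $d_B(w)$ for a small $w$.

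The main (and really the only) subtlety is making sure the two differentials are literally compatible, so that a bounding element for $d_B(v_\gamma)$ in the small complex is also a bounding element in the big complex; this is immediate from the description $C^i((u_q^\ast)_B) = (u_q^B)^{\otimes i} \subset (\hat U_q^B)^{\hat\otimes i} = C^i(\U^{DK}_B)$ and the formula~\eqref{eq:514} for $d_B$, which only involves the multiplication $\cdot_B$ and the counit — both restricted from $\U^{DK}_B$ to $(u_q^\ast)_B$. Given that, the proof is a short three-line deduction from Proposition~\ref{prop:HUB}.
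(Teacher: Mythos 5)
Your proof is correct and follows essentially the same argument as the paper: assume the class vanishes, produce a bounding element $w \in u_q$, observe that $v_\gamma - w$ is a nonzero $1$-cocycle in $C^1(\U^{DK}_B)$, and contradict $H^1(\U^{DK}_B)=0$ from Proposition~\ref{prop:HUB}. Your extra remark that $C^0(\U^{DK}_B)=k$ with trivial differential, so vanishing of the cohomology class forces the cocycle to be literally zero, is a useful clarification of a point the paper leaves implicit.
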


\begin{proof}
We proceed by contradiction.  If $[d_B(v_\gamma)]=0$ then there is an element $v'_\gamma\in u_q$ with $d_B(v'_\gamma)=d_B(v_\gamma)$.  Since $v_\gamma\notin u_q$ the difference $v_\gamma-v'_\gamma$ provides a nonzero element in $C^1(\U^{DK}_B)$ with $d_B(v_\gamma-v'_\gamma)=0$.  But this cannot happen as the difference would provide a nonvanishing class in the first cohomology $H^1(\U^{DK}_B)$, which we know is $0$ by Proposition~\ref{prop:HUB}.
\end{proof}

Recall the exact sequence of Hopf algebras
\[
1\to u_q(\b)\to U_q(\b)\to U(\n)\to 1
\]
from Section~\ref{sect:Lus}, where $\n=\oplus_{\mu\in \Phi^+} \C e_\mu$ is the positive nilpotent subalgebra in $\b$.

\begin{lemma}\label{lem:vmu}
For any positive root $\mu\in \Phi^+$, and $\gamma=l\mu$, there is an element $v_{\gamma}\in \hat{U}_q\setminus u_q$ of $Q$-degree $\gamma$ which extends to a family of units $\mathbf{v}_{\gamma}(\lambda)$ in $\mathbb{V}_{dp}$ of the form
\[
\mathbf{v}_{\gamma}(\lambda)=1+\lambda v_{\gamma}+(\mathrm{terms\ of\ strictly\ higher\ }Q\text{-}\mathrm{degree}).
\]
\end{lemma}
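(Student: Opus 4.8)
The plan is to induct on the height of $\mu$. Write $\pi\colon U_q(\b)\to U(\n)$ for the Hopf surjection from Section~\ref{sect:Lus}, so that $\pi(E_\alpha)=0$, $\pi(K_\alpha)=1$ and $\pi(E^{(l)}_\alpha)=e_\alpha$, whence $\pi(u_q)=\C\cdot 1$; I regard $\pi$ as a continuous map on $\hat U_q$ when convenient. I will establish the slightly stronger statement that for every $\mu\in\Phi^+$ there is a family $\mathbf v_{l\mu}(\lambda)\in\mathbb V_{dp}$ of the claimed shape $\mathbf v_{l\mu}(\lambda)=1+\lambda v_{l\mu}+(\text{strictly higher }Q\text{-degree})$ whose leading coefficient satisfies $\pi(v_{l\mu})=c_\mu e_\mu$ for some nonzero scalar $c_\mu$. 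Granting this, $v_{l\mu}$ is homogeneous of positive $Q$-degree $l\mu$ while $\pi(u_q)$ lies in degree $0$, so $\pi(v_{l\mu})=c_\mu e_\mu\neq 0$ forces $v_{l\mu}\notin u_q$, which is the assertion. For the base case $\mu=\alpha$ is simple and I take $\mathbf v_{l\alpha}(\lambda)=\exp(\lambda E^{(l)}_\alpha)\in\mathbb V_{dp}$: as $E^{(l)}_\alpha$ is homogeneous of $Q$-degree $l\alpha$ its higher powers sit in strictly higher degree, so $v_{l\alpha}=E^{(l)}_\alpha$, $\pi(v_{l\alpha})=e_\alpha$, and $c_\alpha=1$.

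For the inductive step let $\mu$ have height $\geq 2$. A standard fact about root systems (every non-simple positive root is a positive root plus a simple root) supplies a simple $\beta$ with $\nu:=\mu-\beta\in\Phi^+$; since $\nu$ has smaller height, a family $\mathbf v_{l\nu}(\lambda)$ with the above properties is available by induction, and I set
\[
\mathbf v_{l\mu}(\lambda):=\big[\mathbf v_{l\nu}(\lambda),\,\exp(E^{(l)}_\beta)\big]=\mathbf v_{l\nu}(\lambda)\,\exp(E^{(l)}_\beta)\,\mathbf v_{l\nu}(\lambda)^{-1}\,\exp(-E^{(l)}_\beta)\in\mathbb V_{dp},
\]
membership in $\mathbb V_{dp}$ being clear since $\mathbb V_{dp}$ is a group. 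Writing $a=\mathbf v_{l\nu}(\lambda)$, $b=\exp(E^{(l)}_\beta)$, $A=a-1$ and $B=b-1$, and using that $aba^{-1}b^{-1}$ becomes trivial when $a$ or $b$ is set to $1$, one sees that every monomial occurring in $\mathbf v_{l\mu}(\lambda)-1$, viewed as a noncommutative polynomial in $A$ and $B$, contains at least one factor $A$ and at least one factor $B$; since $A$ is supported in $Q$-degrees $\geq l\nu$ and $B$ in degrees $\geq l\beta$, every such monomial lies in degrees $\geq l\nu+l\beta=l\mu$, and the sole contribution in degree exactly $l\mu$ comes from the quadratic part $AB-BA$ applied to the leading terms. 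Thus $\mathbf v_{l\mu}(\lambda)=1+\lambda v_{l\mu}+(\text{strictly higher degree})$ with $v_{l\mu}=[v_{l\nu},E^{(l)}_\beta]$, and applying $\pi$ together with the inductive hypothesis,
\[
\pi(v_{l\mu})=[\pi(v_{l\nu}),e_\beta]=c_\nu\,[e_\nu,e_\beta]=c_\nu N_{\nu,\beta}\,e_\mu,
\]
where the commutator of $e_\nu,e_\beta\in\n$ is computed in $U(\n)$ and agrees with the Lie bracket, and the structure constant $N_{\nu,\beta}$ is nonzero because $\nu$, $\beta$ and $\nu+\beta$ are all roots. Hence $c_\mu:=c_\nu N_{\nu,\beta}\neq 0$ and the induction closes.

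The only delicate point is the $Q$-degree bookkeeping in the commutator expansion together with the choice of inductive invariant: it is essential to carry along the precise normalization $\pi(v_{l\nu})=c_\nu e_\nu$ rather than merely $\pi(v_{l\nu})\neq 0$, as this is exactly what forces the leading term $[v_{l\nu},E^{(l)}_\beta]$ of the new commutator to map to a nonzero vector in $\n$, and hence to lie outside $u_q$. Everything else is a routine check once $Q$-degrees are tracked carefully.
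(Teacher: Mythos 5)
Your argument is correct and follows the paper's proof in all essentials: both induct on root height, use $\exp(\lambda E_\alpha^{(l)})$ in the base case, take a group commutator of a lower-height family with a generator of $\mathbb{V}_{dp}$ in the inductive step, and invoke the projection $U_q(\b)\to U(\n)$ to see $v_\gamma\notin u_q$. The only differences are cosmetic — you place the new simple root on the right of the bracket and parametrize via $\mathbf{v}_{l\nu}(\lambda)$ where the paper places it on the left and parametrizes via $\exp(\lambda E_\alpha^{(l)})$, and you carry the scalar $c_\mu$ explicitly where the paper simply chooses $e_\mu$ to be the iterated bracket.
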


\begin{proof}
We may write each generator $e_\mu$ of the root space in $\n$ as 
\[
e_\mu=[e_{\alpha_1},[\dots[e_{\alpha_{n-1}},e_{\alpha_n}]\dots]],
\]
for some sequence of simple roots $\alpha_1,\dots,\alpha_n$.  We claim that for each $\mu\in \Phi^+$ there is a family of units of the form
\[
\mathbf{v}_{l\mu}(\lambda)=1+\lambda [E^{(l)}_{\alpha_1},[\dots [E^{(l)}_{\alpha_{n-1}},E^{(l)}_{\alpha_n}]\dots]]+(\text{higher $Q$-degree terms})
\]
in $\mathbb{V}_{dp}$.  We proceed by induction on the length $n$ of the given sequence, i.e. the height of the root.  For our base case $n=1$ we take $\mathbf{v}_{l\alpha}(\lambda)=\exp(\lambda E_\alpha^{(l)})$.
\par

Suppose the result holds for height $n$ roots and that $\nu$ is height $n+1$.  Write $e_{\nu}=[e_{\alpha},e_\mu]$ for a simple root $\alpha$ and $e_\mu$ as above, and consider 
\[
v=\mathbf{v}_{l\mu}(1)=1+[E^{(l)}_{\alpha_1},[\dots [E^{(l)}_{\alpha_{n-1}},E^{(l)}_{\alpha_n}]\dots]]+(\text{higher $Q$-degree terms}).
\]
Take $v_+=v-1$.  One now calculates
\[
\begin{array}{rl}
\exp(\lambda E^{(l)}_{\alpha})v\exp(\lambda E^{(l)}_{\alpha})v^{-1}&=\exp(\lambda \mathrm{ad}_{E^{(l)}_\alpha})(v)v^{-1}\\
&=(v+(\exp(\lambda \mathrm{ad}_{E^{(l)}_\alpha})-id)(v))v^{-1}\\
&=1+(\exp(\lambda \mathrm{ad}_{E^{(l)}_\alpha})-id)(1+v_+)v^{-1}\\
&=1+(\exp(\lambda \mathrm{ad}_{E^{(l)}_\alpha})-id)(v_+)v^{-1}\\
&=1+\left(\lambda[E_\alpha^{(l)},v_{l\mu}]+\text{higher $Q$-degree terms}\right)v^{-1}.
\end{array}
\]
Since $v^{-1}=1-(\text{terms of degree }\geq l\mu)$ the final expression reduces to give
\[
\exp(\lambda E^{(l)}_{\alpha})v\exp(\lambda E^{(l)}_{\alpha})v^{-1}=1+\lambda[E_\alpha^{(l)},v_{l\mu}]+\text{higher $Q$-degree terms}.
\]
Hence
\[
\mathbf{v}_{l\mu}(\lambda):=\exp(\lambda E^{(l)}_{\alpha})v\exp(\lambda E^{(l)}_{\alpha})v^{-1}
\]
provides the desired family, with 
\[
v_{l\nu}=[E_\alpha^{(l)},v_{l\mu}]=[E^{(l)}_\alpha,[E^{(l)}_{\alpha_1},[\dots [E^{(l)}_{\alpha_{n-1}},E^{(l)}_{\alpha_n}]\dots]]].
\]
To see that the element $v_{l\nu}\in U_q\subset\hat{U}_q$ is not in $u_q$ note that its image in $U(\n)$ is $e_\nu(\neq 0)$.
\end{proof}

We use the elements $v_\gamma$ from the above lemma, in conjunction with Lemma~\ref{lem:dBv}, to construct all nonvanishing classes in $H^2((u_q^\ast)_B)$.

\begin{lemma}\label{lem:elimination}
Consider a form $B$ on $G^\vee$ and a twist $J=B+\sum_{\mu\in Q^+}J_\mu$ for $u_q$.  Let $d$ be the minimal positive integer with $J_d\neq 0$, in the $\mbb{Z}$-grading.  Then there is a unit $v\in \mathbb{V}$ so that $(v\cdot J)_{d'}=0$ for each $0<d'\leq d$.
\end{lemma}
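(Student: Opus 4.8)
The plan is to reduce to the twisted algebra $u_q^B$ and clear the minimal-degree term by a gauge transformation built from the explicit elements $\mathbf{v}_\gamma(\lambda)$ of Lemma~\ref{lem:vmu}, using the cohomological dictionary of Section~\ref{sect:506} together with the computation $H^2((u_q^\ast)_B)=\n^{(l)}$ of Proposition~\ref{prop:HuB}. First I would pass from $J=B+\sum_\mu J_\mu$ to the twist $\tilde J=B^{-1}J=1+\sum_{\mu\in Q^+}\tilde J_\mu$ for $u_q^B$ via Lemma~\ref{lem:Uset}; by that lemma it suffices to produce $v\in\mathbb{V}$ with $(v\cdot_B\tilde J)_{d'}=0$ for all $0<d'\le d$. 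Let $\gamma$ be a minimal $Q$-degree with $\tilde J_\gamma\ne 0$; then $|\gamma|=d$, and by the discussion preceding Lemma~\ref{lem:bleh}, $\tilde J_\gamma$ is a $2$-cocycle in $C^\bt((u_q^\ast)_B)$, i.e. it determines a class $[\tilde J_\gamma]\in H^2((u_q^\ast)_B)$ sitting in $Q$-degree $\gamma$.

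The key point is a case split according to whether $[\tilde J_\gamma]$ vanishes. If $[\tilde J_\gamma]=0$, then by the Lemma at the end of Section~\ref{sect:506} there is a homogeneous bounding element $v_\gamma\in u_q^B$ solving the coboundary equation~\eqref{eq:cobdry}, and by Lemma~\ref{lem:bleh} the gauge transformation $\exp(v_\gamma)\in\mathbb{V}_{small}\subset\mathbb{V}$ kills the degree-$\gamma$ term while introducing nothing in degrees $\le\gamma$ that was previously zero. If $[\tilde J_\gamma]\ne 0$, then by Proposition~\ref{prop:HuB} the nonzero class lies in $\n^{(l)}=\bigoplus_{\mu\in\Phi^+}\C e_\mu$ with $\deg_Q(e_\mu)=l\mu$, so necessarily $\gamma=l\mu$ for a single positive root $\mu$ and $[\tilde J_\gamma]$ is a scalar multiple of the basis class $e_\mu$. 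Here I would invoke Lemma~\ref{lem:vmu}: the element $v_{l\mu}\in\hat U_q\setminus u_q$ has image $e_\mu$ in $U(\n)$, and one checks (as in Lemma~\ref{lem:vJ}) that $d_B(v_{l\mu})$ is the cocycle representing a nonzero class in $H^2((u_q^\ast)_B)$ of $Q$-degree $l\mu$ — by Lemma~\ref{lem:dBv} this class is nonzero, and since the homogeneous piece $H^2((u_q^\ast)_B)_{l\mu}$ is one-dimensional it must be proportional to $[\tilde J_\gamma]$. So for a suitable $\lambda\in\C$ the element $\mathbf{v}_{l\mu}(\lambda)^{-1}\in\mathbb{V}_{dp}$ satisfies $d_B(\text{linear term})=\tilde J_\gamma$ up to the scalar, and applying $\mathbf{v}_{l\mu}(\lambda)$ (or its inverse) as a gauge transformation cancels the degree-$\gamma$ term; again by the structure of $\mathbf{v}_{l\mu}(\lambda)=1+\lambda v_{l\mu}+(\text{higher})$ and the analysis in Lemma~\ref{lem:bleh}, no new low-degree terms appear.

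Iterating over the finitely many $Q$-degrees $\gamma$ with $|\gamma|=d$ (composing the corresponding elements of $\mathbb{V}$, which is a group) produces a single $v\in\mathbb{V}$ with $(v\cdot_B\tilde J)_{d'}=0$ for all $0<d'\le d$; translating back through Lemma~\ref{lem:Uset} gives the statement for $J$ and $u_q$. The main obstacle I anticipate is the verification, in the $[\tilde J_\gamma]\ne 0$ case, that the cocycle $d_B(v_{l\mu})$ genuinely represents the class $[\tilde J_\gamma]$ up to scalar and that the gauging by $\mathbf{v}_{l\mu}(\lambda)$ does not disturb the already-cleared degrees; this requires combining Lemma~\ref{lem:dBv} (nonvanishing of the class), the one-dimensionality of the graded piece from Proposition~\ref{prop:HuB}, and a careful bookkeeping of which $Q$-degrees the gauge action touches — essentially the content of Lemma~\ref{lem:bleh} applied with $v=\mathbf{v}_{l\mu}(\lambda)$ rather than an exponential of a single homogeneous element. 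A secondary technical wrinkle is keeping track of the form $B$ throughout, but Lemma~\ref{lem:Uset} lets me do all of this for $u_q^B$ uniformly in $B$.
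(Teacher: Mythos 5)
Your proposal follows the paper's proof almost verbatim: pass to $u_q^B$ via $B^{-1}J$ and Lemma~\ref{lem:Uset}, split on whether $[\tilde J_\gamma]$ vanishes, clear it by $\exp(v_\gamma)\in\mathbb{V}_{small}$ in the vanishing case, and otherwise locate $\gamma=l\mu$ via Proposition~\ref{prop:HuB}, use $\mathbf{v}_{l\mu}(\lambda)$ from Lemma~\ref{lem:vmu} together with Lemma~\ref{lem:dBv} and one-dimensionality of $H^2((u_q^\ast)_B)_{l\mu}$, and finally iterate over the finitely many $\gamma$ with $|\gamma|=d$. One small imprecision in the nonvanishing case: choosing $c$ so that $[d_B(cv_{l\mu})]=[\tilde J_\gamma]$ only guarantees the gauged twist $\mathbf{v}_{l\mu}(c)\cdot\tilde J$ has \emph{cohomologically trivial} degree-$\gamma$ term, not a vanishing one, so you still need a further gauge by some $v'\in\mathbb{V}_{small}$ (exactly as in the vanishing case) to actually kill the term $\tilde J_\gamma$; the paper is explicit about this two-step composite $v'\mathbf{v}_\gamma(c)$.
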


\begin{proof}
Let us consider instead a twist $J=1+\sum_{\mu\in Q^+}J_\mu$ for $u_q^B$, with $B$ a form on $G^\vee$.  Take $d$ the minimal $\mathbb{Z}$-degree in which $J-1$ does not vanish.  We claim that there exists $v\in \mathbb{V}$ so that $(v\cdot J)-1$ vanishes in all degrees $\leq d$.  To show this it suffices to show that we can eliminate each term $J_\gamma$ with $|\gamma|=d$, i.e. that for each nonvanishing $J_\gamma$ with $|\gamma|=d$ there exists a unit $v\in \mathbb{V}$ so that $(v\cdot J)_{d''}=0$ for $0<d''<d$
and $(v\cdot J)_d=J_d-J_\gamma$.
\par

For any such $J_\gamma$ we consider the corresponding class in cohomology $[J_\gamma]\in H^2((u_q^\ast)_B)$.  If $[J_\gamma]=0$ then there is an element $v_\gamma\in u_q^B$ of degree $\gamma$ with $d_B(v_\gamma)=J_\gamma$.  Consequently, the unit $v=\exp(v_\gamma)\in \mathbb{V}_{small}$ is such that
\[
(v\cdot J)_{d''}=0\text{ for }0<d''<d
\]
and $(v\cdot J)_d=J_d-J_\gamma$, by Lemma~\ref{lem:bleh}.  If $[J_\gamma]\neq 0$ then $\gamma=l\mu$ for some positive root $\mu$, since $H^2((u_q^\ast)_B)$ is concentrated in $Q$-degrees $l\cdot \Phi^+$ by Proposition~\ref{prop:HuB}.
\par

Consider in this case the family $\mathbf{v}_{\gamma}(\lambda)=1+\lambda v_{\gamma}+\dots$ of Lemma~\ref{lem:vmu}.  We know that each
\[
\begin{array}{rl}
\mathbf{v}_{\gamma}(\lambda)\cdot (1\ot 1)&=\Delta(\mathbf{v}_{\gamma}(\lambda))(\mathbf{v}_{\gamma}(\lambda)^{-1}\ot \mathbf{v}_{\gamma}(\lambda)^{-1})\\
&=1-\lambda d_B(v_{\gamma})+\text{higher $Q$-degree terms}
\end{array}
\]
is a twist for $u_q^B$, by Lemma~\ref{lem:vJ}.  It follows that the homogenous degree $\gamma$ term $\lambda d_B(v_{\gamma})$ is in $u_q$ for each $\lambda\in \C$.  Since $v_{\gamma}\notin u_q$, the the cohomology class $[d_B(v_\gamma)]$ is a nonvanishing class of degree $\gamma$ in $H^2((u^\ast_q)_B)$, by Lemma~\ref{lem:dBv}.
\par

Since $\dim\left(H^2((u^\ast_q)_B)_{\gamma}\right)=1$,  nonvanishing of $[d_B(v_{\gamma})]$ implies that there exists $c\in \C$ so that
\[
[d_B(c v_{\gamma})]=c[d_B(v_{\gamma})]=[J_{\gamma}].
\]
It follows that
\[
\mathbf{v}_{\gamma}(c)\cdot J=1+J_d-c d_B(v_{\gamma})+\text{higher $\mathbb{Z}$-degree terms}
\]
is such that $[(\mathbf{v}_{\gamma}(c)\cdot J)_\gamma]=0$.  As was argued above, there now exists a $v'\in \mathbb{V}_{small}$ so that
\[
v'\cdot (\mathbf{v}_{\gamma}(c)\cdot J)=(v'\mathbf{v}_{\gamma}(c))\cdot J
\]
satisfies $\left((v'\mathbf{v}_{\gamma}(c))\cdot J\right)_{d''}=0$ for $0<d''<d$ and $\left((v'\mathbf{v}_{\gamma}(c))\cdot J\right)_d=J_d-J_\gamma$.  If we take in this case $v=v'\mathbf{v}_{\gamma}(c)$, we arrive at the desired result for twists of $u_q^B$.
\par

Let us return now to the case of a twist for $u_q$ of the form $J'=B+\sum_{n> 0}J'_n$, with $B$ a bilinear form on $G^\vee$ and $J'-B$ vanishing in degrees $<d$.  For $J=B^{-1}J'$ we have just seen that there is a unit $v\in \mathbb{V}$ so that gauging $J$ by $v$ produces a twist for $u_q^B$ which vanishes in all degrees $0<d'\leq d$.  By Lemma~\ref{lem:Uset} it follows that $B^{-1}(v\cdot J')$ vanishes in all degrees $0<d\leq d$.  Hence $v\cdot J'$ itself vanishes in degrees $0<d'\leq d$, since $B\in u_q\ot u_q$ is a degree $0$ unit.
\end{proof}

We can now prove the main result.

\begin{proof}[Proof of Theorem~\ref{thm:twists}]
After gauging by an element in $\C[G]$ if necessary, we may write our twist $J$ as $J=B+\sum_{n> 0} J_n$, where $B$ is an alternating form on $G^\vee$.  Take $N$ the maximal $\mathbb{Z}$-degree with $(u_q)_N\neq 0$.  By Lemma~\ref{lem:elimination} we can define recursively a sequence of units $v_1,\dots,v_{N^2}\in \mathbb{V}$ so that
\[
\left((v_iv_{i-1}\dots v_1)\cdot J\right)-B
\]
has first nonvanishing term in a $\mathbb{Z}$-degree $d_i>i$.  Hence, if we define $v_{tot}=(v_{N^2}\dots v_1)$ then $(v_{tot}\cdot J)-B$ vanishes in degrees $\leq N^2$.  Since $v_{tot}\cdot J$ is a twist in $u_q\ot u_q$, by Lemma~\ref{lem:vJ}, and $u_q\ot u_q$ vanishes in degree $>N^2$, we see that $(v_{tot}\cdot J)-B=0$.  Rather, $v_{tot}\cdot J=B$.
\par

Since $\mathbb{V}_{small}$ is normal in $\mathbb{V}$, and $\mathbb{V}$ is generated by $\mathbb{V}_{dp}$ and $\mathbb{V}_{small}$, we may write $v_{tot}=v^{-1}v'$ with $v\in \mathbb{V}_{dp}$ and $v'\in \mathbb{V}_{small}$.  Since $v_{tot}\cdot J=B$ we have then $v'\cdot J=v\cdot B$.  Hence $J$ is gauge equivalent to the twist $v\cdot B$.
\end{proof}

\section{Algebraic group actions and tensor equivalences}
\label{sect:proofs}

In this section we prove Theorems A and B of the introduction.

\subsection{Algebraic groups and twisted automorphisms}
\label{sect:twAuts}
We recall some information from~\cite{arkhipovgaitsgory03,uqJ}.
\par

One takes the graded dual of the exact sequence of Hopf algebras
\[
1\to u_q(\b)\to U_q(\b)\to U(\n)\to 1
\]
to arrive at another exact sequence
\[
1\to \O(\mathbb{U})\to \U^{DK}\to u_q(\b)^\ast\to 1,
\]
where $\mathbb{U}$ the unipotent group corresponding to $\n$.  From such a sequence Arkhipov and Gaitsgory show that there will be an equivalence of tensor categories between the de-equivariantization $\mathrm{corep}(\U^{DK})_\mathbb{U}$ and $\rep(u_q)$~\cite{arkhipovgaitsgory03,agp14}.
\par

The de-equivariantization is a certain monoidal subcategory of $\mathrm{Coh}(\mathbb{U})$, and it inherits an action of $\mathbb{U}$ which is induced by the translation action of $\mathbb{U}$ on $\mathrm{Coh}(\mathbb{U})$.  Whence we get a natural action of $\mathbb{U}$ on $\rep(u_q)$.  Just as in~\cite{uqJ}, one can verify that the $1$-parameter subgroups $\exp(\lambda e_\alpha)\subset \mathbb{U}$, $\alpha\in \Gamma$, act on $\rep(u_q)$ by way of the {\it twisted automorphisms}
\[
(\exp_\alpha^\lambda,J^\lambda_\alpha),
\]
where $\exp_\alpha^\lambda:=\exp(\mathrm{ad}_{\lambda E^{(l)}_\alpha})=\mathrm{Ad}_{\exp(\lambda E^{(l)}_\alpha)}$ and
\[
J^\lambda_\alpha=\Delta\left(\exp(\lambda E^{(l)}_\alpha)\right)\left(\exp(-\lambda E^{(l)}_\alpha)\ot \exp(-\lambda E^{(l)}_\alpha)\right).
\]
\par

Recall that a twisted automorphism $(\phi,J)$ of a Hopf algebra $H$ is a pair consisting of a twist $J$ and a Hopf isomorphism $\phi:H\to H^J$ (see~\cite[Thm. 2.2]{ngschauenburg08} and~\cite{kassel12,davydov10}).  We have already seen in Section~\ref{sect:V} that $\exp_\alpha^\lambda:u_q\to u_q$ is an algebra automorphism and that $J_\alpha^\lambda$ is a twist for $u_q$.
\par

The collection of twisted automorphisms of a Hopf algebra $H$ forms a group under the product
\[
(\phi,J)(\phi',J') = (\phi\phi', J\phi^{\ot 2}(J')),
\]
and this group acts naturally on the left of the set $\mathrm{Tw}(H)$ of gauge equivalence classes of twists by $(\phi,J)\cdot [J']=[J\phi^{\ot 2}(J')]$.\footnote{Our group operations are {\it opposite} that of some other references.  With our conventions, the association $(\phi,J)\mapsto [\phi,J]\in \Aut_{\ot}(\rep(H))$ will be group anti-homomorphism.}

\begin{lemma}\label{lem:twAut}
For any $v\in \mathbb{V}_{dp}$ the pair $(\mathrm{Ad}_v,\Delta(v)(v^{-1}\ot v^{-1}))$ is a twisted automorphism for $u_q$.  Furthermore, the collection of all such pairs 
\[
\mathfrak{V}_{dp}=\{(\mathrm{Ad}_v,\Delta(v)(v^{-1}\ot v^{-1})):v\in\mathbb{V}_{dp}\}
\]
forms a subgroup in the group of twisted automorphisms for $u_q$.
\end{lemma}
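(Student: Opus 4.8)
The plan is to realize the assignment $v \mapsto (\mathrm{Ad}_v,\, J^v)$, where $J^v := \Delta(v)(v^{-1}\ot v^{-1})$, as a monoid homomorphism from the group $\mathbb{V}_{dp}$ into the group of twisted automorphisms of $u_q$. Once this is done the second assertion is immediate: a monoid homomorphism between groups is automatically a group homomorphism, so its image is a subgroup, and that image is precisely $\mathfrak{V}_{dp}$. Thus both statements of the lemma are obtained simultaneously, the first being the claim that the image lands among twisted automorphisms of the \emph{small} algebra $u_q$, and the second being the homomorphism property.

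First I would check that $(\mathrm{Ad}_v, J^v)$ is a twisted automorphism of $u_q$ for $v \in \mathbb{V}_{dp}$. Writing $v$ as a finite product of generators $\exp(\lambda_i E_{\alpha_i}^{(l)})$ (and noting $\exp(\lambda E_\alpha^{(l)})^{-1} = \exp(-\lambda E_\alpha^{(l)})$ is again a generator), we have $\mathrm{Ad}_v = \exp(\mathrm{ad}_{\lambda_1 E_{\alpha_1}^{(l)}}) \circ \cdots$, a composition of the automorphisms $\exp_{\alpha_i}^{\lambda_i}$ of Lemma~\ref{lem:exp}; hence $\mathrm{Ad}_v$ restricts to an algebra automorphism of $u_q$ with inverse $\mathrm{Ad}_{v^{-1}}$. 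Next, $J^v = v \cdot (1\ot 1)$ in the gauge notation, so Lemma~\ref{lem:vJ} (applied with $B = 1$ and $J = 1 \ot 1$) shows $J^v$ is a Drinfeld twist lying in $u_q \ot u_q$. Finally the standard computation
\[
\Delta^{J^v}(\mathrm{Ad}_v\, h) = (J^v)^{-1}\Delta(vhv^{-1})J^v = (v\ot v)\Delta(h)(v^{-1}\ot v^{-1}) = (\mathrm{Ad}_v \ot \mathrm{Ad}_v)\Delta(h),
\]
together with $\epsilon \circ \mathrm{Ad}_v = \epsilon$ (since $\epsilon(v)\epsilon(v^{-1}) = 1$), shows $\mathrm{Ad}_v$ is a bijective bialgebra map $u_q \to u_q^{J^v}$, hence a Hopf isomorphism. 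So $(\mathrm{Ad}_v, J^v)$ is a twisted automorphism of $u_q$.

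For the homomorphism property I would observe $(\mathrm{Ad}_1, J^1) = (\mathrm{id}, 1\ot 1)$ is the identity of the group, and for $v, w \in \mathbb{V}_{dp}$ compute, using the group law $(\phi,J)(\phi',J') = (\phi\phi', J\phi^{\ot 2}(J'))$ and $\mathrm{Ad}_v \circ \mathrm{Ad}_w = \mathrm{Ad}_{vw}$,
\[
J^v\, (\mathrm{Ad}_v \ot \mathrm{Ad}_v)(J^w) = \Delta(v)(v^{-1}\ot v^{-1})(v\ot v)\Delta(w)(w^{-1}\ot w^{-1})(v^{-1}\ot v^{-1}) = J^{vw},
\]
so that $(\mathrm{Ad}_v, J^v)(\mathrm{Ad}_w, J^w) = (\mathrm{Ad}_{vw}, J^{vw})$. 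Hence $v \mapsto (\mathrm{Ad}_v, J^v)$ is the desired monoid homomorphism and $\mathfrak{V}_{dp}$ is its image, a subgroup of the group of twisted automorphisms of $u_q$. The only genuine content here — everything else being formal Hopf-algebra bookkeeping valid for an arbitrary unit $v$ in the completion $\hat{U}_q$ — is that $\mathrm{Ad}_v$ preserves $u_q$ and that $J^v$ lies in $u_q \ot u_q$; both are exactly what Lemmas~\ref{lem:exp} and~\ref{lem:vJ} supply, so I do not anticipate a real obstacle.
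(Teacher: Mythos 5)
Your proof is correct and takes essentially the same approach as the paper's: the same appeal to Lemmas~\ref{lem:exp} and~\ref{lem:vJ} for the first claim, the same algebraic verification that $\mathrm{Ad}_v$ is a bialgebra isomorphism $u_q \to u_q^{J^v}$, and the same product computation $J^v(\mathrm{Ad}_v\ot\mathrm{Ad}_v)(J^w) = J^{vw}$ to obtain the subgroup. Your explicit reframing as a monoid homomorphism $\mathbb{V}_{dp}\to\{\text{twisted automorphisms}\}$ is just a cleaner way of stating what the paper's final sentence says implicitly.
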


\begin{proof}
The fact that each pair $(\mathrm{Ad}_{v},\Delta(v)(v^{-1}\ot v^{-1}))$ is a twisted automorphism follows from Lemmas~\ref{lem:exp} and~\ref{lem:vJ}, and the computation
\[
\Delta^{\Delta(v)(v^{-1}\ot v^{-1})}(\mathrm{Ad}_v(h))=(v\ot v)\Delta(v)^{-1}\Delta(vhv^{-1})\Delta(v)(v^{-1}\ot v^{-1})=\mathrm{Ad}_{v\ot v}(\Delta(h)).
\]
For any two twisted automorphisms of the form $\left(\mathrm{Ad}_{v_1},\Delta(v_1)(v_1^{-1}\ot v_1^{-1})\right)$ and $\left(\mathrm{Ad}_{v_2},\Delta(v_2)(v_2^{-1}\ot v_2^{-1})\right)$, with the $v_i\in\hat{U}_q$, we have
\[
\begin{array}{l}
\left(\mathrm{Ad}_{v_1},\Delta(v_1)(v_1^{-1}\ot v_1^{-1})\right)\cdot\left(\mathrm{Ad}_{v_2},\Delta(v_2)(v_2^{-1}\ot v_2^{-1})\right)\\\\
=\left(\mathrm{Ad}_{v_1v_2},\ \Delta(v_1)(v_1^{-1}\ot v_1^{-1})\mathrm{Ad}_{v_1\ot v_1}\left(\Delta(v_2)(v_2^{-1}\ot v_2^{-1})\right)\right)\\\\
=\left(\mathrm{Ad}_{v_1v_2},\ \Delta(v_1)\Delta(v_2)(v_2^{-1}\ot v_2^{-1})(v_1^{-1}\ot v_1^{-1})\right)\\\\
=\left(\mathrm{Ad}_{v_1v_2},\ \Delta(v_1v_2)((v_1v_2)^{-1}\ot (v_1v_2)^{-1})\right).
\end{array}
\]
Since $\mathbb{V}_{dp}$ is a group, the above calculation also shows that $\mathfrak{V}_{dp}$ is a subgroup in the group of twisted automorphisms.
\end{proof}

\subsection{A classification of twists for $u_q(\b)$ in terms of the $\mathbb{U}$-action}

\begin{theorem}\label{thm:unip}
Under the above natural action of $\mathbb{U}$ on $\mathrm{Tw}(u_q(\b))$, there is an equality $\mathrm{Tw}\left(u_q(\b)\right)=\mathbb{U}\cdot \mathrm{Alt}(G^\vee)$.
\end{theorem}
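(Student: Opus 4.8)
The plan is to combine the normal form for twists from Theorem~\ref{thm:twists} with the twisted-automorphism description of the $\mathbb{U}$-action recalled in Section~\ref{sect:twAuts}. The inclusion $\mathbb{U}\cdot\mathrm{Alt}(G^\vee)\subseteq\mathrm{Tw}(u_q(\b))$ is automatic, since $\mathbb{U}$ acts on $\mathrm{Tw}(u_q(\b))$ and $\mathrm{Alt}(G^\vee)$ sits inside $\mathrm{Tw}(u_q(\b))$; so the content is the reverse inclusion. By Theorem~\ref{thm:twists}, an arbitrary gauge class of twists for $u_q(\b)$ has the form $[v\cdot B]$ with $v\in\mathbb{V}_{dp}$ and $B$ an alternating form on $G^\vee$, and it therefore suffices to place each such $[v\cdot B]$ in the $\mathbb{U}$-orbit of $[B]$.

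To do this I would first observe how the twisted automorphism $\tau_v:=(\mathrm{Ad}_v,\Delta(v)(v^{-1}\ot v^{-1}))$ from Lemma~\ref{lem:twAut} acts on the class of a form. Using $\mathrm{Ad}_v^{\ot 2}(B)=(v\ot v)B(v^{-1}\ot v^{-1})$ and the left-action formula $(\phi,K)\cdot[J']=[K\,\phi^{\ot 2}(J')]$ of Section~\ref{sect:twAuts}, one computes directly
\[
\tau_v\cdot[B]=\big[\,\Delta(v)(v^{-1}\ot v^{-1})(v\ot v)B(v^{-1}\ot v^{-1})\,\big]=\big[\,\Delta(v)B(v^{-1}\ot v^{-1})\,\big]=[v\cdot B].
\]
Next I would identify the subgroup $\mathfrak{V}_{dp}$ of twisted automorphisms with (a subgroup of) the image of the $\mathbb{U}$-action: since $\mathbb{V}_{dp}$ is generated by the elements $\exp(\lambda E_\alpha^{(l)})$, the group $\mathfrak{V}_{dp}$ of Lemma~\ref{lem:twAut} is generated by the $\tau_{\exp(\lambda E_\alpha^{(l)})}=(\exp_\alpha^\lambda,J_\alpha^\lambda)$, and these are exactly the twisted automorphisms through which the one-parameter subgroups $\exp(\lambda e_\alpha)\subset\mathbb{U}$ act. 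Hence the action of $\mathfrak{V}_{dp}$ on $\mathrm{Tw}(u_q(\b))$ is contained in the action of $\mathbb{U}$, and consequently $[v\cdot B]=\tau_v\cdot[B]\in\mathfrak{V}_{dp}\cdot[B]\subseteq\mathbb{U}\cdot[B]\subseteq\mathbb{U}\cdot\mathrm{Alt}(G^\vee)$. As $[J]$ was arbitrary this gives $\mathrm{Tw}(u_q(\b))\subseteq\mathbb{U}\cdot\mathrm{Alt}(G^\vee)$, completing the proof. (One may also note, if the orbit description is to be read as tight, that $\mathbb{U}$ is the unipotent radical of a Borel of the semisimple group $\mathbb{G}$ and so is generated by its simple root subgroups $\exp(\lambda e_\alpha)$, so in fact the whole of $\mathbb{U}$ acts through $\mathfrak{V}_{dp}$.)

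Most of the work is already done — Theorem~\ref{thm:twists} supplies the normal form, and Lemmas~\ref{lem:exp},~\ref{lem:vJ}, and~\ref{lem:twAut} together with Section~\ref{sect:twAuts} supply everything about $\mathbb{V}_{dp}$, $\mathfrak{V}_{dp}$, and the $\mathbb{U}$-action — so the remaining argument is essentially bookkeeping, with no genuine obstacle. The one point requiring care is the opposite-convention footnote of Section~\ref{sect:twAuts}: one must use the twisted-automorphism multiplication $(\phi,J)(\phi',J')=(\phi\phi',J\phi^{\ot 2}(J'))$ and the left-action rule consistently, so that the passage from a factorization $v=w_1\cdots w_k$ in $\mathbb{V}_{dp}$ to a corresponding chain of actions of the $\exp(\lambda_i e_{\alpha_i})\in\mathbb{U}$ on $[B]$ is performed in the correct order.
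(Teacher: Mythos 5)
Your argument is correct and follows essentially the same route as the paper's proof: both invoke Theorem~\ref{thm:twists} for the normal form $[v\cdot B]$, use Lemma~\ref{lem:twAut} to recognize this as $\mathfrak{V}_{dp}\cdot[B]$, and observe that the generators of $\mathfrak{V}_{dp}$ realize the action of the simple root subgroups of $\mathbb{U}$. The only thing the paper adds is a closing remark (using the splitting $\C[G]\to u_q$ and Proposition~\ref{lem:TwG}) that $\mathrm{Alt}(G^\vee)\to\mathrm{Tw}(u_q)$ is injective, justifying the identification of $\mathrm{Alt}(G^\vee)$ with its image in $\mathrm{Tw}(u_q)$ implicit in writing the equality; this is a notational point rather than a substantive gap in your argument.
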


\begin{proof}
Take $\mathfrak{V}_{dp}$ as in Lemma~\ref{lem:twAut}.  Since the generating $1$-parameter subgroups $\exp(\lambda e_\lambda)$ of $\mathbb{U}$ act on $\mathrm{Tw}(u_q)$ via the generators of $\mathfrak{V}_{dp}$, we have
\[
\mathbb{U}\cdot [J]=\mathfrak{V}_{dp}\cdot [J]
\]
for any class $[J]\in \mathrm{Tw}(u_q)$.  By Theorem~\ref{thm:twists} we have $[J]=[v\cdot B]$ for some $v\in \mathbb{V}_{dp}$ and alternating form $B$ on $G^\vee$.  One checks directly that 
\[
[v\cdot B]=(\mathrm{Ad}_v,\Delta(v)(v^{-1}\ot v^{-1}))\cdot [B]\in \mathfrak{V}_{dp}\cdot [B].
\]
Hence
\[
\mathrm{Tw}(u_q)=\mathfrak{V}_{dp}\cdot \mathrm{Tw}(u_q)=\mathfrak{V}_{dp}\cdot [\mathrm{Alt}(G^\vee)]=\mathbb{U}\cdot [\mathrm{Alt}(G^\vee)].
\]
Form the classification of twists for abelian group algebras~\cite[Cor. 5.8]{guillotkassel09}, and the fact that the Hopf inclusion $\C[G]\to u_q$ is split, one finds that the mapping $\mathrm{Alt}(G^\vee)\to \mathrm{Tw}(u_q)$ is injective.  Hence $[\mathrm{Alt}(G^\vee)]=\mathrm{Alt}(G^\vee)$ and $\mathrm{Tw}(u_q)=\mathbb{U}\cdot \mathrm{Alt}(G^\vee)$.
\end{proof}

\subsection{Tensor equivalence and Hopf algebras}

We use Theorem~\ref{thm:twists} to classify all Hopf algebras which are tensor equivalent to $u_q$, up to Hopf isomorphism.  The following result can alternately be deduced from the works of Andruskiewitsch, Angiono, Iglesias, and Schneider~\cite{ASIII,aai17,angionoiglesias}.

\begin{theorem}[AAIS]\label{thm:equiv}
Suppose $H$ is a Hopf algebra such that $\rep(H)$ and $\rep(u_q(\b))$ are equivalent as tensor categories.  Then there is an alternating form $B$ on $G^\vee$ such that $H\cong u_q(\b)^B$ as Hopf algebras.  When $l$ is coprime to the determinant of the Cartan matrix for $\g$, the form $B$ is uniquely determined by $H$.  In general, if two forms $B$ and $B'$ admit a Hopf isomorphism $u_q(\b)^B\cong u_q(\b)^{B'}$ then their restrictions to $G$, along the Killing map, are equal.
\end{theorem}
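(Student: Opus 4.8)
The plan is to reduce the classification of Hopf algebras tensor-equivalent to $u_q(\b)$ to the classification of twists obtained in Theorem~\ref{thm:twists}, using the general principle that Hopf algebras $H$ with $\rep(H)\simeq \rep(u_q)$ as tensor categories are precisely those of the form $(u_q^\ast)_\sigma$ dualized, i.e. $H\cong ((u_q^J)^\ast)^\ast$ for a Drinfeld twist $J$ of $u_q$ — more precisely, such $H$ correspond to fiber functors on $\rep(u_q)$, and any two fiber functors differ by a twist. First I would invoke the fact that, since $u_q$ is finite-dimensional, a tensor equivalence $\rep(H)\simeq\rep(u_q)$ together with the canonical fiber functor on $\rep(H)$ yields a (possibly exotic) fiber functor $F$ on $\rep(u_q)$; by reconstruction, $H\cong \End(F)^\ast$-type object, and the key input is that all fiber functors on $\rep(u_q)$ are gauge-equivalent to the standard one composed with a twist — this is where one uses that $u_q$ is pointed with abelian group of grouplikes, so every fiber functor is ``of group-theoretical origin'' in the appropriate sense, and hence $H\cong u_q(\b)^J$ for some twist $J$.

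Once $H\cong u_q(\b)^J$, I would apply Theorem~\ref{thm:twists}: up to gauge equivalence $J = v\cdot B$ for $v\in\mathbb{V}_{dp}$ and $B$ an alternating form on $G^\vee$. Gauge-equivalent twists give Hopf-isomorphic twisted algebras, and twisting by $v\cdot B$ versus $B$ differ by the twisted automorphism $(\mathrm{Ad}_v, \Delta(v)(v^{-1}\ot v^{-1}))$ of Lemma~\ref{lem:twAut}, so $u_q^{v\cdot B}\cong u_q^B$ as Hopf algebras. This gives $H\cong u_q(\b)^B$, establishing the existence part. For the uniqueness clauses, I would compare two alternating forms $B, B'$ with $u_q(\b)^B\cong u_q(\b)^{B'}$. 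The point is that twisting does not change the underlying algebra, so such an isomorphism is in particular an algebra automorphism $\psi$ of $u_q$; since $u_q$ is generated by $\C[G]$ and the skew-primitives $E_\alpha$, and since $\C[G]$ is the coradical, $\psi$ preserves $\C[G]$ and acts on $G$. Comparing the two coproducts on grouplike elements forces $B$ and $B'$ to agree after transporting by the induced automorphism of $G^\vee$; since the coproduct on $\C[G]$ is untwisted, a careful bookkeeping of how $\psi$ interacts with the degree-zero parts of the twists shows the restrictions of $B$ and $B'$ to $G$ (via the Killing map $\kappa: G\to G^\vee$) must coincide. When $\kappa$ is an isomorphism — equivalently $\det(\text{Cartan matrix})$ is invertible in $\mathbb{Z}/l\mathbb{Z}$ — knowing the restriction to $G$ determines the form on $G^\vee$, giving full uniqueness.

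The main obstacle I expect is the first step: showing that an arbitrary Hopf algebra $H$ with $\rep(H)\simeq\rep(u_q)$ actually arises as a Drinfeld twist of $u_q$, rather than merely knowing the tensor categories match. This requires knowing that all fiber functors on $\rep(u_q)$ are, up to natural isomorphism, twists of the standard one — a statement that is true here because $u_q$ is pointed and the group of grouplikes is abelian, so the relevant obstruction groups (parametrizing ``genuinely new'' fiber functors) vanish; one can cite the general theory or argue via the coradical filtration that any fiber functor restricts on $\rep(\C[G])\subset\rep(u_q)$ to a twist by an element of $\C[G]\ot\C[G]$, and then extends. A secondary subtlety is the very last clause: extracting from a Hopf isomorphism $u_q^B\cong u_q^{B'}$ exactly the statement about restrictions to $G$, since a priori the algebra automorphism could act nontrivially on the $E_\alpha$'s by scaling, so one must check these scalings do not affect the comparison of $B|_G$ and $B'|_G$ — this is a finite but slightly delicate computation with the twisted coproducts $\Delta^B$ and $\Delta^{B'}$ on the grouplikes versus on the skew-primitives.
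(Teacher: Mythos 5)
Your existence argument is essentially the paper's: the reduction from a tensor equivalence to a twist is exactly \cite[Thm.~2.2]{ngschauenburg08}, and the step $u_q^{v\cdot B}\cong u_q^B$ via $\mathrm{Ad}_v$ is Lemma~\ref{lem:twAut} plus Theorem~\ref{thm:twists}. One misconception: you suggest that reducing $H$ to a Drinfeld twist of $u_q$ is ``the main obstacle'' and requires $u_q$ to be pointed with abelian grouplikes (so that ``obstruction groups vanish''). In fact the Ng--Schauenburg theorem is completely general for finite-dimensional Hopf algebras; no special features of $u_q$ are used, and the paper simply cites it.

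The real gap is in your uniqueness argument. You write that ``comparing the two coproducts on grouplike elements forces $B$ and $B'$ to agree.'' This cannot work: since $B\in\C[G]\ot\C[G]$ and $G$ is abelian, $B$ commutes with $g\ot g$ for every grouplike $g$, so $\Delta^B(g)=B^{-1}(g\ot g)B=g\ot g=\Delta^{B'}(g)$. The twisted coproduct is \emph{identical} to the untwisted one on $\C[G]$; the form $B$ only becomes visible on the skew-primitives, e.g.\ $\Delta^B(E_\alpha)=E_\alpha\ot g_\alpha+h_\alpha K_\alpha\ot E_\alpha$ for grouplikes $g_\alpha,h_\alpha$ determined by $B$ and $\kappa(\alpha)$. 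So your sketch has no mechanism that actually extracts $B|_G$. The paper handles this by dualizing: a Hopf isomorphism $u_q\cong u_q^B$ (after reducing to $B'=1$) dualizes to $\phi:u_q^\ast\to(u_q^\ast)_B$, which must permute the $X_\alpha$ up to scalars; plugging this into the twisted commutation relation $\omega\cdot_B X_\alpha\cdot_B\omega^{-1}=\omega(K_\alpha)\frac{B(\alpha,\omega)}{B(\omega,\alpha)}X_\alpha$ (Lemma~\ref{lem:relations}) yields $B^2(\phi(\alpha),\phi(\beta))=q^{(\alpha,\beta)-(\phi'(\alpha),\phi'(\beta))}$, whose right side is symmetric; since $l$ is odd, this forces $B|_G$ to be both symmetric and alternating, hence trivial. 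Your closing remark that the computation is ``on the grouplikes versus on the skew-primitives'' gestures in the right direction, but the stated mechanism (comparison on grouplikes) is wrong, and without the explicit relation above you do not reach the conclusion.
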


\begin{proof}
First, the existence of an invertible $(H,u_q)$-bimodule implies that $H$ is finite dimensional.  We verify the existence of such a form $B$, then address uniqueness.
\par

By Lemma~\ref{lem:twAut}, for each $v\in \mathbb{V}_{dp}$ the conjugation $\mathrm{Ad}_v$ restricts to an algebra automorphism of $u_q$.  Given any twist $J$ for $u_q$, and $v\in \mathbb{V}$, a simple calculation verifies that $\mathrm{Ad}_v$ provides a Hopf isomorphism between the twisted algebras $\mathrm{Ad}_{v}:u_q^{(v\cdot J)}\to u_q^J$.  It follows that any twist $J$ for $u_q$ will admit an alternating form $B$ on $G^\vee$ so that $u_q^J\cong u_q^B$ as a Hopf algebra, by Theorem~\ref{thm:twists}.
\par

Consider now any Hopf algebra $H$ with $\rep(H)\overset{\sim}\to \rep(u_q)$ as tensor categories.  Then there is a Hopf isomorphism between $H$ and $u_q^J$ for some twist $J$, by~\cite[Thm. 2.2]{ngschauenburg08}.  Hence there is a Hopf isomorphism $H\cong u_q^B$ for some alternating $B$.
\par

As for our uniqueness claim, suppose there is a Hopf isomorphism $\psi:u_q^{B'}\overset{\cong}\to u_q^{B}$ for two alternating forms $B'$ and $B$.  Note that $B'$, and $(B')^{-1}$, remain twists for $u_q^{B'}$.  So we may twist both $u_q^{B'}$ and $u_q^{B}$ to arrive at a Hopf isomorphism $u_q\overset{\cong}\to u_q^{B\psi^{\ot 2}(B')^{-1}}$.  Hence it suffices to assume $B'=1$ and show that if $u_q\cong u_q^B$ for an alternating form $B$, then $B$ restricts to the trivial form on $G$.
\par

We may take the dual of such a isomorphism to get a Hopf isomorphism $\phi:u^\ast_q\overset{\cong}\to (u^\ast_q)_B$.  By Lemma~\ref{lem:DKD} $u_q^\ast$, which is a Hopf quotient of the modified De Concini-Kac algebra $\U^{DK}$, is generated by the grouplikes $G^\vee$ and skew primitives $X_\alpha$.  By considering the $J$-adic filtration on $u_q$, with respect to the Jacobson radical $J=(E_\alpha:\alpha\in \Gamma)$, one can also see that the $X_\alpha$ provide all skew primitives in the dual $u^\ast_q$.
\par

Since $\phi$ preserves grouplikes and skew primitives there is a set bijection $\phi':\Gamma\to \Gamma$, and scalars $c_\alpha\in\mathbb{C}^\times$ for each $\alpha\in \Gamma$, so that 
\[
\phi(X_\alpha)= c_\alpha X_{\phi'(\alpha)}.
\]
Compatibility with the comultiplication implies $\phi(\alpha)\in G^\vee$ is the image of $\phi'(\alpha)\in Q/lQ$ under the Killing map.  After precomposing with a scaling on the generators of $u^+_q$, we may assume that each $c_\alpha=1$.
\par

We have now
\[
\begin{array}{rl}
q^{(\alpha,\beta)-(\phi'(\alpha),\phi'(\beta))}\phi(X_\alpha)& = q^{-(\phi'(\alpha),\phi'(\beta))}\phi(\beta X_\alpha \beta^{-1})\\
&=q^{-(\phi'(\alpha),\phi'(\beta))}\phi(\beta)\cdot_B X_{\phi'(\alpha)} \cdot_B \phi(\beta)^{-1}\\
&=\frac{B(\phi(\alpha),\phi(\beta))}{B(\phi(\beta),\phi(\alpha))}X_{\phi'(\alpha)}=B^2(\phi(\alpha),\phi(\beta))X_{\phi'(\alpha)}
\end{array}
\]
for each $\alpha,\beta\in \Gamma$ (see Lemma~\ref{lem:relations}).  This implies
\[
B^2(\phi(\alpha),\phi(\beta))=q^{(\alpha,\beta)-(\phi'(\alpha),\phi'(\beta))}
\]
for each $\alpha,\beta\in\Gamma$.  Since the right hand side of the above equation is symmetric, we conclude that $B^2$ restricted to the image of $G$ in $G^\vee$ is symmetric.  Since $2\nmid l$ we can take the square root of $B^2$ to find that the restriction of $B$ to $G$ is also symmetric.  This forces the restriction of $B$ to be $1$, as desired.
\par

When $l$ is coprime to the determinant of the Cartan matrix the killing map $\kappa:G\to G^\vee$ is an isomorphism.  So uniqueness follows.
\end{proof}

\section{Proofs of Propositions~\ref{prop:HUB} and~\ref{prop:HuB}}
\label{sect:UDK}

We prove Propositions~\ref{prop:HUB} and~\ref{prop:HuB}, which claim equalities
\[
H^1(\U^{DK}_B)=H^2(\U^{DK}_B)=0,
\]
\[
H^1((u^\ast_q)_B)=0\ \ \text{and}\ \ H^2((u^\ast_q)_B)=\n^{(l)}
\]
respectively.  Our computations are based on explicit presentations of the algebras $\U^{DK}_B$ and $(u^\ast_q)_B$, and the majority of the material below is dedicated to exhibiting such presentations.
\par

{\it We fix for the section a bilinear form $B$ on $G^\vee$}.  Take $u_q=u_q(\b)$ and $\U^{DK}$ as in Section~\ref{sect:modDK}.

\subsection{A presentation of the algerba $\U^{DK}_B$}

The left and right $\C[G]$-actions on $\U^{DK}$ are given by
\[
g\cdot \omega=\omega\cdot g=g(\omega)\omega,\ \ g\cdot X_\alpha=g(\alpha^{-1})X_\alpha,\ \ \mathrm{and}\ \ X_\alpha\cdot g=X_\alpha,
\]
for $g\in G$ and $\omega\in G^\vee$.  Hence, for the multiplication $\cdot_B$ of the cocycle twist $\U^{DK}_B$, we have
\[
X_\alpha\cdot_B X_\beta=B(\alpha^{-1},\beta^{-1})X_\alpha X_\beta=B(\alpha,\beta)X_\alpha X_\beta,
\]
\[
\omega\cdot_B X_\alpha=B(\omega,\alpha)^{-1} \omega X_\alpha,
\]
and
\[
\omega\cdot_B\chi=\frac{B(\omega,\chi)}{B(\omega,\chi)}\omega\chi=\omega\chi
\]
for each $\omega,\chi\in G^\vee$.  One establishes by induction

\begin{lemma}\label{lem:496}
For any collection of generators $X_{\alpha_1},\dots, X_{\alpha_m}$ of $\U^{DK}$, and $\omega\in G^\vee$, we have
\[
\omega\cdot_B X_{\alpha_1}\cdot_B\dots\cdot_B X_{\alpha_m}=B^{-1}(\omega,\alpha_1\dots\alpha_m)(\prod_{1\leq i<j\leq m}B(\alpha_i,\alpha_j))\omega X_{\alpha_1}\dots X_{\alpha_m}.
\]
In particular, $\U^{DK}_B$ is generated as an algebra by $\C[G^\vee]$ and the $X_\alpha$.
\end{lemma}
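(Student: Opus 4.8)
The plan is to argue by induction on $m$. For $m\le 1$ the asserted identity is precisely one of the three elementary multiplication rules for $\cdot_B$ recorded just above the statement, with the empty product $\prod_{1\le i<j\le 1}B(\alpha_i,\alpha_j)$ read as $1$. For the inductive step I would peel off the last generator, writing
\[
\omega\cdot_B X_{\alpha_1}\cdot_B\cdots\cdot_B X_{\alpha_m}=\bigl(\omega\cdot_B X_{\alpha_1}\cdot_B\cdots\cdot_B X_{\alpha_{m-1}}\bigr)\cdot_B X_{\alpha_m},
\]
apply the inductive hypothesis to the parenthesized factor, and thereby reduce the whole computation to evaluating $\bigl(\omega X_{\alpha_1}\cdots X_{\alpha_{m-1}}\bigr)\cdot_B X_{\alpha_m}$, where now $\omega X_{\alpha_1}\cdots X_{\alpha_{m-1}}$ is an honest product inside $\U^{DK}$.

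To handle that last product I would invoke the general cocycle-twist formula $h\cdot_\sigma h'=\sigma^{-1}(h_1,h'_1)\sigma(h_3,h'_3)h_2h'_2$ of Section~\ref{sect:Jsigma}, where $\sigma$ is the cocycle on $\U^{DK}$ obtained by pulling the form $B$ back along the Hopf projection $\U^{DK}\to\C[G^\vee]$. The simplifying point is that $\sigma$ and $\sigma^{-1}$ both factor through $\pi\ot\pi$, hence vanish on any simple tensor one of whose legs has nonzero $Q$-degree. Feeding in the iterated coproduct $\Delta^{(2)}(X_{\alpha_m})=X_{\alpha_m}\ot\alpha_m^{-1}\ot\alpha_m^{-1}+1\ot X_{\alpha_m}\ot\alpha_m^{-1}+1\ot 1\ot X_{\alpha_m}$, only the middle term survives; running $\Delta$ across $\Delta(\omega)\Delta(X_{\alpha_1})\cdots\Delta(X_{\alpha_{m-1}})$, the only surviving contribution is the one picking the $X_{\alpha_i}\ot\alpha_i^{-1}$ branch of every $\Delta(X_{\alpha_i})$. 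This leaves the single scalar $B\bigl(\omega(\alpha_1\cdots\alpha_{m-1})^{-1},\alpha_m^{-1}\bigr)$ times $\omega X_{\alpha_1}\cdots X_{\alpha_m}$; expanding that scalar by bilinearity of $B$ (using $B(\mu^{-1},\nu^{-1})=B(\mu,\nu)$ and $B(\omega,\mu^{-1})=B(\omega,\mu)^{-1}$ from Section~\ref{sect:twists}) and multiplying by the inductive scalar produces exactly $B^{-1}(\omega,\alpha_1\cdots\alpha_m)\prod_{1\le i<j\le m}B(\alpha_i,\alpha_j)$, closing the induction. One could equally first isolate the two special identities $X_\alpha\cdot_B\chi=B^{-1}(\alpha,\chi)X_\alpha\chi$ and $X_{\alpha_1}\cdot_B\cdots\cdot_B X_{\alpha_m}=\bigl(\prod_{i<j}B(\alpha_i,\alpha_j)\bigr)X_{\alpha_1}\cdots X_{\alpha_m}$ by the same device and then combine them, which avoids carrying $\omega$ through the main induction.

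For the final assertion, recall from Section~\ref{sect:modDK} that Lusztig's basis for $U^-_{q^{-1}}$ (via Lemma~\ref{lem:DKD}) exhibits $\U^{DK}$ as spanned by the monomials $\omega X_{\alpha_1}\cdots X_{\alpha_m}$. Since the displayed identity writes each such monomial as a nonzero scalar multiple of a $\cdot_B$-product of elements of $\C[G^\vee]$ and of the $X_\alpha$, these elements span $\U^{DK}_B$ as a vector space, hence generate it as an algebra.

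The proof is essentially bookkeeping, so the only point requiring genuine care — the ``main obstacle'' — is exactly that bookkeeping: correctly identifying which legs of the iterated coproducts are annihilated by $\sigma$ and $\sigma^{-1}$, and keeping the inverses and arguments of the bicharacter $B$ straight throughout. Everything else is forced.
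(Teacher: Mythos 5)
Your proof is correct and follows exactly the induction that the paper indicates by the phrase ``One establishes by induction'' (with the three base-case identities computed just above the lemma). The mechanism you use — that the cocycle $\sigma$ and its inverse both factor through the projection $\U^{DK}\to\C[G^\vee]$, so that in the iterated coproducts only the legs with nonzero $Q$-degree-zero projection can contribute — is precisely what makes the paper's base-case computations work, and you have carried it through the inductive step cleanly, including the bicharacter bookkeeping that collapses the two scalars into $B^{-1}(\omega,\alpha_1\cdots\alpha_m)\prod_{i<j}B(\alpha_i,\alpha_j)$.
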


\begin{lemma}\label{lem:relations}
The generators of the cocycle twist $\U^{DK}_B$ satisfy the commutator relations
\begin{equation}\label{eq:com}
\omega\cdot_B X_\alpha\cdot_B \omega^{-1}=\omega(K_\alpha)\frac{B(\alpha,\omega)}{B(\omega,\alpha)}X_\alpha
\end{equation}
and the $B$-Serre relations
\begin{equation}\label{eq:Bserre}
\sum_{i=0}^{1-\langle\alpha,\beta\rangle}(-1)^i\frac{B(\alpha,\alpha)^{\frac{\langle\alpha,\beta\rangle(1-\langle\alpha,\beta\rangle)}{2}}}{B(\alpha,\beta)^{1-\langle\alpha,\beta\rangle-i}B(\beta,\alpha)^i}\qbinom{1-\langle\alpha,\beta\rangle}{i}_{q^{-1}_\alpha}X_\alpha^{1-\langle\alpha,\beta\rangle-i}\cdot_BX_\beta\cdot_BX_\alpha^i=0.
\end{equation}
\end{lemma}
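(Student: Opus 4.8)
The plan is to obtain both families of relations by transporting the corresponding relations of the untwisted algebra $\U^{DK}$ across the cocycle twist, using the elementary $\cdot_B$-rules already recorded and Lemma~\ref{lem:496}. The structural fact driving everything is that the cocycle $B$ on $\U^{DK}$ is pulled back from $\C[G^\vee]$ along the Hopf projection $\pi\colon\U^{DK}\to\C[G^\vee]$, and $\pi$ annihilates every element of nonzero $Q$-degree; hence in the formula $h\cdot_B h'=B^{-1}(h_{1},h'_{1})\,B(h_{3},h'_{3})\,h_{2}h'_{2}$ only the terms whose outer legs lie in $Q$-degree $0$ survive. I will also use freely that $B$ is a bicharacter, so that $B(\gamma^{-1},\delta)=B(\gamma,\delta)^{-1}=B(\gamma,\delta^{-1})$; this absorbs the factors $\alpha^{-1}$ coming from $\Delta(X_\alpha)=X_\alpha\ot\alpha^{-1}+1\ot X_\alpha$.

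For the commutator relation~\eqref{eq:com} I would combine the identity $\omega\cdot_B X_\alpha=B(\omega,\alpha)^{-1}\omega X_\alpha$ recorded above, its right-hand counterpart $X_\alpha\cdot_B\omega^{-1}=B(\alpha,\omega)X_\alpha\omega^{-1}$ (proved by the same ``one surviving term'' computation, or by extending Lemma~\ref{lem:496} to allow a grouplike on the right), and the untwisted relation $\omega X_\alpha\omega^{-1}=\omega(K_\alpha)X_\alpha$, valid in $\U^{DK}$ by Lemma~\ref{lem:PD2} and the identification $\varphi$ of Lemma~\ref{lem:DKD}. Associativity of $\cdot_B$ then gives
\[
\omega\cdot_B X_\alpha\cdot_B\omega^{-1}=B(\omega,\alpha)^{-1}B(\alpha,\omega)\,\omega X_\alpha\omega^{-1}=\frac{B(\alpha,\omega)}{B(\omega,\alpha)}\,\omega(K_\alpha)X_\alpha,
\]
which is~\eqref{eq:com}.

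The substantive point is the $B$-Serre relation~\eqref{eq:Bserre}. I would begin from the ordinary quantum Serre relation $\sum_{i}(-1)^{i}\qbinom{1-\langle\alpha,\beta\rangle}{i}_{q_\alpha^{-1}}X_\alpha^{1-\langle\alpha,\beta\rangle-i}X_\beta X_\alpha^{i}=0$, which holds in $\U^{DK}$ by construction (it is part of the De Concini--Kac presentation, transported by Lemma~\ref{lem:DKD}), and rewrite each monomial occurring in~\eqref{eq:Bserre} in terms of ordinary products via the $\omega=1$ case of Lemma~\ref{lem:496}. For a monomial with $p$ factors $X_\alpha$, then $X_\beta$, then $i$ factors $X_\alpha$ (so $p=1-\langle\alpha,\beta\rangle-i$ and $p+i=1-\langle\alpha,\beta\rangle$ is independent of $i$), that lemma gives
\[
(X_\alpha)^{\cdot_B p}\cdot_B X_\beta\cdot_B (X_\alpha)^{\cdot_B i}=B(\alpha,\alpha)^{\binom{p+i}{2}}\,B(\alpha,\beta)^{p}\,B(\beta,\alpha)^{i}\;X_\alpha^{p}X_\beta X_\alpha^{i},
\]
the three exponents counting, among the $2-\langle\alpha,\beta\rangle$ ordered slots, the unordered pairs of $\alpha$-slots, the ordered pairs $(\alpha,\beta)$, and the ordered pairs $(\beta,\alpha)$, respectively. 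Multiplying by the scalar prefactor of~\eqref{eq:Bserre} cancels $B(\alpha,\beta)^{p}B(\beta,\alpha)^{i}$, and the residual power of $B(\alpha,\alpha)$ has exponent $\binom{1-\langle\alpha,\beta\rangle}{2}+\tfrac12\langle\alpha,\beta\rangle(1-\langle\alpha,\beta\rangle)=0$. Therefore the $i$-th summand of~\eqref{eq:Bserre} equals $(-1)^{i}\qbinom{1-\langle\alpha,\beta\rangle}{i}_{q_\alpha^{-1}}X_\alpha^{p}X_\beta X_\alpha^{i}$, and the full sum is the ordinary Serre relation, hence $0$.

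The only real work is the exponent bookkeeping in the last display --- correctly tallying the three kinds of pairs among the $2-\langle\alpha,\beta\rangle$ factors and matching them against the prefactor --- together with the elementary identity $\binom{1-\langle\alpha,\beta\rangle}{2}=-\tfrac12\langle\alpha,\beta\rangle(1-\langle\alpha,\beta\rangle)$. I do not anticipate a genuine obstacle: once these are in hand, the $B$-Serre relation is equivalent, term by term, to the untwisted Serre relation, which already holds in $\U^{DK}$.
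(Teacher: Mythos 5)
Your proof is correct and follows the same route as the paper's (very terse) proof: for \eqref{eq:com} the paper exhibits exactly the reduction $\omega\cdot_B X_\alpha\cdot_B\omega^{-1}=\tfrac{B(\alpha,\omega)}{B(\omega,\alpha)}\omega X_\alpha\omega^{-1}$ and invokes Lemma~\ref{lem:PD2}, and for \eqref{eq:Bserre} it simply cites Lemma~\ref{lem:496} together with the $\U^{DK}$-relations; your write-up fills in precisely the exponent bookkeeping and the identity $\binom{1-\langle\alpha,\beta\rangle}{2}+\tfrac12\langle\alpha,\beta\rangle(1-\langle\alpha,\beta\rangle)=0$ that the paper leaves implicit.
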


\begin{proof}
The commutator relation follows by Lemma~\ref{lem:PD2} and the calculation
\[
\omega\cdot_B X_\alpha\cdot_B \omega^{-1}=\frac{B(\alpha,\omega)}{B(\omega,\alpha)}\omega X_\alpha\omega^{-1}.
\]
The $B$-Serre relations follow from the the previous lemma and the relations for $\U^{DK}$.
\end{proof}

\begin{proposition}\label{prop:presUB}
The algebra projection
\[
\pi:\frac{\C\langle \bar{\omega},\bar{X}_\alpha:\omega\in G^\vee,\alpha\in \Gamma\rangle}{(1_{G^\vee}-1,\ \bar{\omega}\bar{\chi}-\overline{\omega\chi},\ \mathrm{relations}\ \eqref{eq:com}\ \mathrm{and}\ \eqref{eq:Bserre})}\to \U^{DK}_B,\ \left\{\begin{array}{l}
\bar{\omega}\mapsto \omega\\
\bar{X}_\alpha\mapsto X_\alpha
\end{array}\right.
\]
is an isomorphism.
\end{proposition}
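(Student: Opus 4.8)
The plan is to check that $\pi$ is a well-defined surjection using what has already been proved, and then to establish injectivity by a $Q$-graded dimension count. That $\pi$ is well-defined is exactly Lemma~\ref{lem:relations}: the elements $\omega,X_\alpha\in\U^{DK}_B$ satisfy the relations~\eqref{eq:com} and~\eqref{eq:Bserre}, together with the group relations. Surjectivity of $\pi$ is the last assertion of Lemma~\ref{lem:496}. All the imposed relations are $Q$-homogeneous once one sets $\deg_Q(\bar{\omega})=0$ and $\deg_Q(\bar{X}_\alpha)=-\alpha$ (the $Q$-degree of each monomial in~\eqref{eq:Bserre} is $-(1-\langle\alpha,\beta\rangle)\alpha-\beta$, independent of the summation index), so the source algebra $A$ is $Q$-graded and $\pi$ respects this grading. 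On the target, the form $B$ has $Q$-degree zero, so cocycle twisting by $B$ does not change the $Q$-graded pieces of $\U^{DK}=U^-_{q^{-1}}\rtimes\C[G^\vee]$; hence $\dim(\U^{DK}_B)_{-\eta}=|G^\vee|\cdot p(\eta)$ for $\eta\in Q^+$, where $p(\eta)$ counts the expressions $\eta=\sum_{\mu\in\Phi^+}n_\mu\mu$ with $n_\mu\geq 0$ (Lusztig's PBW basis of $U^-_{q^{-1}}$; see~\cite{deconcinikac89,lusztig90}). It therefore suffices to prove $\dim A_{-\eta}\leq|G^\vee|\cdot p(\eta)$ for all $\eta$: surjectivity of $\pi$ then forces equality in every degree, whence injectivity.

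To bound $\dim A_{-\eta}$ I would normal-order. The relations $1_{G^\vee}=1$ and $\bar{\omega}\bar{\chi}=\overline{\omega\chi}$ make the span of the $\bar{\omega}$ a quotient of $\C[G^\vee]$, of dimension at most $|G^\vee|$, and the commutator relations~\eqref{eq:com} allow one to push every $\bar{\omega}$ to the left in an arbitrary product of generators. Consequently $A_{-\eta}$ is spanned by the elements $\bar{\omega}\,w$ with $\omega\in G^\vee$ and $w$ a word in the $\bar{X}_\alpha$ of $Q$-degree $-\eta$, giving $\dim A_{-\eta}\leq|G^\vee|\cdot\dim A^-_{-\eta}$, where $A^-\subseteq A$ is the subalgebra generated by the $\bar{X}_\alpha$. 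The proposition is thereby reduced to the single estimate $\dim A^-_{-\eta}\leq p(\eta)$.

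This estimate is the heart of the matter. The idea is that $A^-$ — the algebra on generators $\bar{X}_\alpha$ modulo the $B$-Serre relations~\eqref{eq:Bserre} — is a \emph{graded twist} of the De Concini--Kac negative part $U^-_{q^{-1}}$. Equip the free algebra $F=\C\langle\bar{X}_\alpha:\alpha\in\Gamma\rangle$, graded by $Q$, with the product $u\star v:=B(|u|,|v|)\,uv$ on homogeneous elements, where $|\cdot|$ denotes $Q$-degree; this is associative because $B$ is bilinear, and the factor $B(|u|,|v|)$ really does depend only on the $Q$-degrees, again by bilinearity of $B$ (compare the scalar appearing in Lemma~\ref{lem:496}). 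Since a $\star$-word is a nonzero scalar multiple of the corresponding ordinary word, $(F,\star)$ is again free on the $\bar{X}_\alpha$. Now rewriting an ordinary quantum Serre element $\sum_i(-1)^i\qbinom{N}{i}_{q_\alpha^{-1}}\bar{X}_\alpha^{N-i}\bar{X}_\beta\bar{X}_\alpha^i$, with $N=1-\langle\alpha,\beta\rangle$, as a $\star$-combination of the generators produces precisely the coefficients of~\eqref{eq:Bserre}; this is the scalar bookkeeping performed in the proof of Lemma~\ref{lem:relations}, together with the identity $\langle\alpha,\beta\rangle N/2=-\binom{N}{2}$, which is exactly what the prefactor $B(\alpha,\alpha)^{\langle\alpha,\beta\rangle N/2}$ in~\eqref{eq:Bserre} records. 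Hence, as algebras, $A^-$ is isomorphic to $U^-_{q^{-1}}$ with its $\star$-twisted product; and a graded twist has the same underlying $Q$-graded vector space as the algebra it twists, so $\dim A^-_{-\eta}=\dim(U^-_{q^{-1}})_{-\eta}=p(\eta)$. In fact only the easy half of the PBW theorem for $U^-_{q^{-1}}$ is needed here, namely that ordered monomials in the root vectors span each graded component, since the opposite inequality is handed to us by the surjectivity of $\pi$.

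The main obstacle is precisely the argument of the previous paragraph: one has to be certain that trading the ordinary quantum Serre relations for their $B$-twisted forms~\eqref{eq:Bserre} neither collapses nor enlarges the negative part. This comes down to the two verifications already indicated — that $(\mu,\nu)\mapsto B(\mu,\nu)$ factors through the $Q$-degrees, and that~\eqref{eq:Bserre} is the $\star$-rewriting of the usual Serre relations. A reader who prefers to sidestep the graded-twist language can instead introduce twisted root vectors $\bar{X}_\mu\in A^-$ through Lusztig's braid-group recursion and repeat the De Concini--Kac straightening argument inside $A^-$, concluding directly that the ordered monomials $\prod_{\mu\in\Phi^+}\bar{X}_\mu^{n_\mu}$ span $A^-_{-\eta}$; this once more gives $\dim A^-_{-\eta}\leq p(\eta)$ and finishes the proof.
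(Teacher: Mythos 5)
Your proof is correct, and it turns on the same key observation as the paper's: the ideal of $B$-Serre relations is carried to the ideal of ordinary quantum Serre relations by a twist that preserves $Q$-graded dimensions, so the $Q$-graded dimensions of the domain of $\pi$ are bounded above by those of $\U^{DK}$, and then equality follows from surjectivity. The two arguments differ in how they package the twist. The paper equips the entire domain $R$ (grouplike and skew-primitive generators together) with a $\C[G\times G^{op}]$-module algebra structure and twists $\pi$ by the form $\mathbb{B}^{-1}\in\C[G\times G^{op}]^{\ot 2}$, which simultaneously undoes the $B$-modification in both~\eqref{eq:com} and~\eqref{eq:Bserre}; this handles the whole presentation in one stroke but requires the $G\times G^{op}$-bookkeeping. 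You instead first normal-order to reduce the dimension bound to the $\C[G^\vee]$-free negative part $A^-$, and then apply a plain $Q$-graded (Zhang-type) twist $u\star v=B(|u|,|v|)uv$ to $A^-$ alone, which is more elementary and avoids the module-algebra language; the price is the extra normal-ordering step and the scalar bookkeeping (the identity $\binom{N-i}{2}+\binom{i}{2}+i(N-i)=\binom{N}{2}$) verifying that the $\star$-rewriting of the Serre element reproduces~\eqref{eq:Bserre}. One small stylistic caveat: when you say ``$A^-$ is isomorphic to $U^-_{q^{-1}}$ with its $\star$-twisted product,'' it is worth spelling out that the linear bijection $F\to F$ multiplying each word of $Q$-degree data $(\alpha_1,\dots,\alpha_m)$ by $\prod_{i<j}B(\alpha_i,\alpha_j)$ is a $Q$-graded algebra isomorphism $(F,\cdot)\to(F,\star)$ carrying the ideal $(S_B)$ bijectively onto the ideal $(S)$ in each $Q$-degree; this is the precise content behind the graded-twist slogan and is what delivers $\dim A^-_{-\eta}=\dim(U^-_{q^{-1}})_{-\eta}$. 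With that made explicit, the proof is complete and valid.
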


We adopt an unorthodox notation for the proof. For a Hopf algebra $H$, twist $J\in H\ot H$, and $H$-module algebra $R$, we can twist the multiplicaiton on $R$ to arrive at a new algebra with multiplication $r\cdot_{new}r'=\mathrm{mult}(J\cdot (r\ot r'))$.  We write this new algebra as $R(J)$, to not confuse with the cocycle or Drinfeld twist of a Hopf algebra.

\begin{proof}
Let $R$ denote the domain of $\pi$.  We grade $R$ by the root lattice $Q$ by taking $\deg_Q(\bar{\omega})=0$ and $\deg_Q(\bar{X}_\alpha)=-\alpha$.  Under this grading $\pi$ is homogenous.  Hence to prove that $\pi$ is an isomorphism it suffices to prove that $R$ and $\U^{DK}_B$ have the same dimension in each $Q$-degree.
\par

We can give $R$ the $G\times G^{op}$-action defined by $(g,h)\cdot \omega=g(\omega)h^{-1}(\omega)\bar{\omega}$ and $(g,h)\cdot \bar{X}_\alpha=g(\alpha^{-1})\bar{X}_\alpha$.  Under this action $R$ becomes a $\C[G\times G^{op}]$-module algebra, and $\pi$ is a map of $\C[G\times G^{op}]$-module algebras.
\par

Let $\mathbb{B}\in \C[G\times G^{op}]^{\ot 2}$ be the bilinear form corresponding to $B\ot B\in\C[G]^{\ot 2}\ot \C[G^{op}]^{\ot 2}$ under the obvious isomorphism $\C[G]^{\ot 2}\ot \C[G^{op}]^{\ot 2}\cong \C[G\times G^{op}]^{\ot 2}$.  We twist $\pi$ to arrive at a surjection
\[
\pi(\mathbb{B}^{-1}):R(\mathbb{B}^{-1})\to (\U^{DK}_B)(\mathbb{B}^{-1}).
\]
(Note that $\pi(\mathbb{B})=\pi$ as a vector space map.)  The algebra $R(\mathbb{B}^{-1})$ now has the relations of the De Concini Kac algebra.  So, for each $\mu\in Q$,
\[
\dim(R_\mu)=\dim(R(\mathbb{B}^{-1})_\mu)\leq \dim((\U^{DK})_\mu)=\dim((\U^{DK}_B)_\mu).
\]
The inverse inequality follows by surjectivity or $\pi$, and hence $\dim(R_\mu)=\dim((\U^{DK}_B)_\mu)$ for each $\mu\in Q$.  We conclude that $\pi$ is an isomorphism.
\end{proof}

Let $\U^-_B$ denote the subalgebra in $\U^{DK}_B$ generated by the $\{X_\alpha:\alpha\in \Gamma\}$.  Note that $\U^-_B$ is a $Q$-graded subalgebra in $U^{DK}_B$.

\begin{proposition}\label{prop:presUB+}
The map
\[
\pi^-:\frac{\C\langle\bar{X}_\alpha:\alpha\in \Gamma\rangle}{(\mathrm{relations}\ \eqref{eq:Bserre})}\to \U^-_B,\ \ \bar{X}_\alpha\mapsto X_\alpha,
\]
is an algebra isomorphism.  Furthermore, if we let $G^\vee$ act on $\U^-_B$ by conjugation in $\U^{DK}_B$, multiplication provides an isomorphism $\U^-_B\rtimes \C[G^\vee]\cong\U^{DK}_B$.
\end{proposition}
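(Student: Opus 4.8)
The plan is to bootstrap off the presentation of $\U^{DK}_B$ established in Proposition~\ref{prop:presUB} by recognizing that algebra as a smash product. Write $R^-=\C\langle\bar X_\alpha:\alpha\in\Gamma\rangle/(\text{relations }\eqref{eq:Bserre})$ for the domain of $\pi^-$. First I would equip $R^-$ with a $\C[G^\vee]$-module algebra structure by letting each $\omega\in G^\vee$ act as the algebra automorphism determined on generators by $\omega\cdot\bar X_\alpha=\omega(K_\alpha)\frac{B(\alpha,\omega)}{B(\omega,\alpha)}\bar X_\alpha$, i.e. by the scalar appearing in the commutator relation~\eqref{eq:com}. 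This descends to the quotient $R^-$ because bilinearity of $B$ (together with $\omega(K_{\alpha_1})\cdots\omega(K_{\alpha_m})=\omega(K_{\alpha_1}\cdots K_{\alpha_m})$) ensures the automorphism scales every $Q$-homogeneous element by a scalar depending only on its $Q$-degree, and the relations~\eqref{eq:Bserre} are $Q$-homogeneous.

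Next I would observe that the smash product $R^-\rtimes\C[G^\vee]$ is, by inspection of generators and relations, exactly the algebra $R$ appearing in Proposition~\ref{prop:presUB}: it is generated by the $\bar X_\alpha$ and by $\C[G^\vee]$, subject to the relations~\eqref{eq:Bserre}, the group relations of $G^\vee$, and the cross relations $\omega\bar X_\alpha\omega^{-1}=\omega\cdot\bar X_\alpha$, the last of which is precisely~\eqref{eq:com}. Hence Proposition~\ref{prop:presUB} furnishes an algebra isomorphism $\Psi:R^-\rtimes\C[G^\vee]\xrightarrow{\ \sim\ }\U^{DK}_B$ with $\bar X_\alpha\mapsto X_\alpha$ and $\omega\mapsto\omega$. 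Since $\C[G^\vee]$ has a basis of group elements, $R^-\rtimes\C[G^\vee]$ is free as a left $R^-$-module on that basis, so $R^-$ sits inside it as a subalgebra; consequently $\Psi$ restricts to an algebra embedding of $R^-$ onto the subalgebra of $\U^{DK}_B$ generated by the $X_\alpha$, namely $\U^-_B$. This embedding sends $\bar X_\alpha\mapsto X_\alpha$, so it is $\pi^-$, which proves the first assertion.

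For the second assertion, note that under $\pi^-$ the module-algebra action of $\C[G^\vee]$ on $R^-$ from the first step corresponds to the conjugation action of $G^\vee$ on $\U^-_B$ inside $\U^{DK}_B$, by Lemma~\ref{lem:relations} (again this is the content of~\eqref{eq:com}). Therefore $\pi^-$ induces an algebra isomorphism $\pi^-\rtimes\mathrm{id}:R^-\rtimes\C[G^\vee]\xrightarrow{\ \sim\ }\U^-_B\rtimes\C[G^\vee]$, and one checks on elements $x\otimes\omega$ that $\Psi\circ(\pi^-\rtimes\mathrm{id})^{-1}$ is nothing but the multiplication map $\U^-_B\rtimes\C[G^\vee]\to\U^{DK}_B$, $x\otimes\omega\mapsto x\cdot_B\omega$. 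Being a composite of isomorphisms, this multiplication map is an isomorphism.

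There is no deep step here: all the real content lies in Proposition~\ref{prop:presUB}, and the only points demanding a little care are checking that the module-algebra structure on $R^-$ descends to the quotient (the $Q$-homogeneity argument) and matching the smash-product presentation of $R^-\rtimes\C[G^\vee]$ with the presentation of $R$. As an alternative to the smash-product route, one could instead repeat the dimension count in the proof of Proposition~\ref{prop:presUB} directly for $R^-$, twisting by $B$ to reduce to Lusztig's known basis for the negative part of the De Concini-Kac algebra; but reusing Proposition~\ref{prop:presUB} is cleaner and we expect that to be the approach taken.
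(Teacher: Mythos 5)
Your proof is correct and takes essentially the same route as the paper's: both form the smash product $R^-\rtimes\C[G^\vee]$, match it against the presentation of $R$ from Proposition~\ref{prop:presUB}, and then read off the two conclusions. The only (harmless) difference in organization is that the paper first establishes the multiplication isomorphism $\U^-_B\rtimes\C[G^\vee]\cong\U^{DK}_B$ by a direct dimension count via Lemma~\ref{lem:496} and then invokes faithful flatness of $\C[G^\vee]$ to get $\pi^-$, whereas you identify $R^-\rtimes\C[G^\vee]$ with $R$ outright and extract both assertions from the resulting isomorphism $\Psi$.
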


\begin{proof}
Surjectivity of the multiplication map $\U^-_B\rtimes \C[G^\vee]\to \U^{DK}_B$ is clear, since $\U^{DK}_B$ is generated by $G^\vee$ and the $X_\alpha$.  Injectivity follows by Lemma~\ref{lem:496} and a dimension count in each homogenous $Q$-degree.  As for the map $\pi^-$, let us denote the domain by $R^-$.  We let $\C[G^\vee]$ act on $R^-$ by ``conjugation"
\[
\omega\cdot \bar{X}_\alpha=\omega(K_\alpha)\frac{B(\alpha,\omega)}{B(\omega,\alpha)}\bar{X}_\alpha.
\]
Take the smash product to get a surjective map
\[
\pi^-\rtimes \C[G^\vee]: R^-\rtimes \C[G^\vee]\to \U^-_B\rtimes \C[G^\vee].
\]
The algebra $R^-\rtimes \C[G^\vee]$ satisfies all of the relations of the domain $R$ of $\pi$, from Proposition~\ref{prop:presUB}.  So, just as in Proposition~\ref{prop:presUB}, we find that $\pi^-\rtimes\C[G^\vee]$ is an isomorphism.  Since $\C[G^\vee]$ is faithfully flat over $k$, it follows that $\pi^-$ is an isomorphism.
\end{proof}

\subsection{Proof of Proposition~\ref{prop:HUB}}
\label{sect:HUB}

Before presenting the proof we give some background information.  Since $\U^{DK}_B=\U^-_B\rtimes \C[G^\vee]$, for any $\U^{DK}_B$-modules $M$ and $N$ we have
\[
\Hom_{\U^{DK}_B}(M,N)=\Hom_{\U^-_B}(M,N)^{G^\vee}.
\]
Hence we have an equality of cohomologies $H^\bt(\U^{DK}_B)=H^\bt(\U^-_B)^{G^\vee}$.
\par

We are particularly interested in the case in which $M$ and $N$ are $G$-graded, and the corresponding $G^\vee\subset U^{DK}_B$-action is as follows:
\[
\omega\cdot m=\omega(\deg_G m)\frac{B(\deg_G m,\omega)}{B(\omega,\deg_G m)}m,
\]
for homogenous $m$. (We use the Killing map to identify the $G$-degree with a character in $G^\vee$.) In this case we can decompose the set of morphisms
\[
\Hom_{\U^-_B}(M,N)=\oplus_{g\in G}\Hom_{\U^-_B}(M,N)_g, \ \ \Hom_{\U^-_B}(M,N)_g=\left\{\begin{array}{c}
\text{homogenous degree }g\\
\text{maps }f:M\to N
\end{array}\right\}.
\]
Under this decomposition $G^\vee$ will act on each homogenous $f\in \Hom_{\U^-_B}(M,N)_g$ as
\[
\omega\cdot f=\omega(g)\frac{B(\omega,g)}{B(g,\omega)}f.
\]
Whence we have an easy basis of eigenvectors with respect to which we can calculate the $G^\vee$-invariants.
\par

Note that $\U^-_B$, with $G$-grading induced by its $Q$-grading, admits such a $\U^{DK}_B$-module structure.  As do the shifts $\U^-_B\ot \C_g$ by $g\in G$.

\begin{proof}[Proof of Proposition~\ref{prop:HUB}]
Via the identification $Q/lQ=G$ we will denote $G$-degrees by elements in $Q/lQ$.  We let $\bar{X}$ denote formal variables.  As $\U^-_B$ is a connected $\mathbb{Z}$-graded algebra, we have the minimal $\mathbb{Z}$-graded resolution
\[
P=\cdots\to \U^-_B\ot V\to \U^-_B\ot \C\{\bar{X}_\alpha:\alpha\in\Gamma\}\to \U^-_B\to 0
\]
of the trivial module, where $V$ is the graded vector space of relations
\[
\C\left\{{\Small\sum_{i=0}^{1-\langle\alpha,\beta\rangle}(-1)^i\frac{B(\alpha,\alpha)^{\frac{\langle\alpha,\beta\rangle(1-\langle\alpha,\beta\rangle)}{2}}}{B(\alpha,\beta)^{1-\langle\alpha,\beta\rangle-i}B(\beta,\alpha)^i}\qbinom{1-\langle\alpha,\beta\rangle}{i}_{\zeta_\alpha}\bar{X}_\alpha^{1-\langle\alpha,\beta\rangle-i}\cdot_B\bar{X}_\beta\cdot_B\bar{X}_\alpha^i}\right\}_{\alpha,\beta}.
\]
(See for example~\cite[Sect. 2]{atv07}.)  The resolution $P$ is also $G$-graded, where the $\bar{X}_\alpha$ are of degree $\alpha^{-1}$ and the $B$-Serre relations are of respective degrees $\alpha^{\langle\alpha,\beta\rangle-1}\beta^{-1}$.
\par

Since $P$ is minimal, the first and second cohomologies are given by the duals $H^1(\U^-_B)=(\C\{\bar{X}_\alpha:\alpha\in\Gamma\})^\ast$ and $H^2(\U^-_B)=V^\ast$, as a $G$-graded spaces.  We have $\alpha\cdot \bar{X}_\alpha^\ast=q^2 \bar{X}^\ast_\alpha$ for each dual function $\bar{X}_\alpha^\ast$ to the $\bar{X}_\alpha$, and hence $H^1(\U^{DK}_B)=H^1(\U^-_B)^{G^\vee}=0$.  We claim also $H^2(\U^{DK}_B)=0$.
\par

The space $H^2(\U^-_B)=V^\ast$ is spanned by functions $f$ of $G$-degrees $\alpha^{1-\langle\alpha,\beta\rangle}\beta$.  Since the fraction $\frac{B(\omega,?)}{B(?,\omega)}$ is alternating, we can act on such a function by its degree to get
\begin{equation}\label{eq:703}
(\alpha^{1-\langle\alpha,\beta\rangle}\beta)\cdot f=q^{(1-\langle\alpha,\beta\rangle)^2(\alpha,\alpha)+2(1-\langle\alpha,\beta\rangle)(\alpha,\beta)+(\beta,\beta)}f.
\end{equation}
By considering the possible lengths of roots in $\Gamma$ we have
\[
(1-\langle\alpha,\beta\rangle)^2(\alpha,\alpha)+2(1-\langle\alpha,\beta\rangle)(\alpha,\beta)+(\beta,\beta)
\]
\[
=\left\{
\begin{array}{ll}
4\ \mathrm{or}\ 6 &\text{when $\g$ is simply laced}\\
4,\ 6,\ \mathrm{or}\ 10 &\text{in types $B_n$ or $C_n$}\\
4,\ 6,\ 10,\ \mathrm{or}\ 12 &\text{in type }F_4\\
14 &\text{in type }G_2.
\end{array}\right.
\]
Since $l$ is odd, the restrictions on $l$ introduced in Section~\ref{sect:uq} are sufficient to ensure that the order of $q$ avoids each of the above integers.  So the coefficient appearing in~\eqref{eq:703} are never $1$.  It follows that the invariants vanish, and $H^2(\U^{DK}_B)=H^2(\U^-_B)^{G^\vee}=0$.
\end{proof}

\subsection{Preliminary information for the proof of Proposition~\ref{prop:HuB}}

We now consider the small algebra $u_q=u_q(\b)$.  If we take the (graded) dual of the $Q$-graded inclusion $u_q\to U_q$ we get a $Q$-graded Hopf projection $\U^{DK}\cong U_q^\#\to u_q^\ast$.  Since $\U^{DK}$ is generated by the functions $X_\alpha$, and $G^\vee$, $u_q^\ast$ will be generated by these functions as well.  Let $u^-$ denote the subalgebra of $u_q^\ast$ generated by the $X_\alpha$.

\begin{lemma}\label{lem:756}
The projection $\U^{DK}\to u_q^\ast$ gives $u_q^\ast$ as the quotient
\[
u_q^\ast=\U^{DK}/(X_\mu^l:\mu\in\Phi^+)=u^-\rtimes \C[G^\vee].
\]
\end{lemma}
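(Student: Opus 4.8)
The plan is to present $u_q^\ast$ as an explicit quotient of $\U^{DK}$ and then pin down the kernel by a single dimension count; the one nontrivial input is the vanishing $X_\mu^l=0$ in $u_q^\ast$ for $\mu\in\Phi^+$, which I would address last. Granting that vanishing, the surjective $Q$-graded Hopf map $\U^{DK}\cong U_q^\#\to u_q^\ast$ kills each $X_\mu^l$, and so factors through a surjection of $Q$-graded algebras $\bar R:=\U^{DK}/(X_\mu^l:\mu\in\Phi^+)\twoheadrightarrow u_q^\ast$.

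Next I would run the dimension count. Since $\U^{DK}=\U^-\rtimes\C[G^\vee]$ (Proposition~\ref{prop:presUB+} at $B=1$) and conjugation by any $\omega\in G^\vee$ scales $X_\mu$, hence $X_\mu^l$, by a root of unity (cf.~\eqref{eq:com}), the two-sided ideal of $\U^{DK}$ generated by the $X_\mu^l$ has the form $I\rtimes\C[G^\vee]$, where $I$ is the two-sided ideal of $\U^-$ generated by the same elements; thus $\bar R=(\U^-/I)\rtimes\C[G^\vee]$. By Lemma~\ref{lem:DKD} we may identify $\U^-$ with the De Concini-Kac nilpotent algebra $U^-_{q^{-1}}$, whose PBW theorem gives a $\C$-basis of ordered monomials $\prod_{\mu\in\Phi^+}X_\mu^{a_\mu}$ with $a_\mu\geq 0$; modulo $I$ only those with $0\leq a_\mu<l$ can survive, so $\dim_\C(\U^-/I)\leq l^{|\Phi^+|}$ and hence
\[
\dim_\C\bar R\ \leq\ l^{|\Phi^+|}\,|G^\vee|\ =\ l^{\,|\Phi^+|+\mathrm{rank}(\g)}\ =\ \dim_\C u_q\ =\ \dim_\C u_q^\ast .
\]
Together with the surjection $\bar R\twoheadrightarrow u_q^\ast$ this forces $\bar R\overset{\sim}\to u_q^\ast$, i.e. $u_q^\ast=\U^{DK}/(X_\mu^l:\mu\in\Phi^+)$. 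Restricting this isomorphism to the subalgebra $\U^-/I\subseteq\bar R$ identifies it with the subalgebra $u^-=\langle X_\alpha:\alpha\in\Gamma\rangle\subseteq u_q^\ast$, and transports the smash decomposition of $\bar R$ to the decomposition $u_q^\ast=u^-\rtimes\C[G^\vee]$.

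It remains to justify $X_\mu^l=0$ in $u_q^\ast$, which I expect to be the crux of the argument. For a simple root $\mu=\alpha$ it is free: no PBW monomial of $u_q$ with exponents in $[0,l)$ has $Q$-degree $l\alpha$, so $(u_q)_{l\alpha}=0$ and hence $(u_q^\ast)_{-l\alpha}=0$. For non-simple $\mu$ an honest structural input is needed. Writing $u_q(\b)=u^+\rtimes\C[G]$ with $u^+$ the subalgebra generated by the $E_\alpha$, the dual decomposes as a bosonization $u_q(\b)^\ast\cong (u^+)^\#\rtimes\C[G^\vee]$, where the graded-dual braided Hopf algebra $(u^+)^\#$ is precisely $u^-$ and is again a finite-dimensional Nichols algebra of Cartan type; since the Killing form is symmetric, its braiding is the one obtained from that of $u^+$ by inverting $q$, so $(u^+)^\#$ is a small nilpotent algebra at the root of unity $q^{-1}$, in which every root vector is $l$-nilpotent by the structure theory of quantum groups at a root of unity \cite{deconcinikac89,lusztig90II}. (When $\det$ of the Cartan matrix is invertible mod $l$, this vanishing is already visible directly from the $R$-matrix, exactly as in the proof of Lemma~\ref{lem:DKD}.) Everything else is routine bookkeeping with smash products and PBW bases.
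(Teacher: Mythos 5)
Your proposal is correct and takes essentially the same route as the paper: both arguments reduce the vanishing $X_\mu^l=0$ to the identification of the braided Hopf subalgebra $u^-$ of the bosonization $u_q^\ast=u^-\rtimes\C[G^\vee]$ with (the Nichols algebra of) the negative part of the small quantum group at $q^{-1}$ -- where $l$-nilpotency of root vectors is known -- and then finish with the surjection $\U^{DK}/(X_\mu^l)\twoheadrightarrow u_q^\ast$ and a dimension count. The only differences are cosmetic: you spell out the PBW dimension count that the paper leaves implicit, and you peel off the simple-root case by a degree argument that the Nichols-algebra identification already subsumes.
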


\begin{proof}
Note that $u_q^\ast$ is pointed, coradically graded, and generated by the skew primitives $X_\alpha$.  Hence $u_q^\ast$ is a bosonization $\mathfrak{B}(V)\rtimes \C[G^\vee]$, where $V$ is the Yetter-Drinfeld $G^\vee$-module spanned by the skew primitives $X_\alpha$ in $u_q^\ast$ and $\mathfrak{B}(V)$ is the corresponding Nichols algebra~\cite{nichols78,andruskiewitschschneider00}.
\par

Consider alternatively the span $V'$ of the skew-primitives in $u_{q^{-1}}(\b_-)$.  Since the algebra $\mathfrak{B}(V)$ only depends on the braiding for $V$, the relations of Lemmas~\ref{lem:PD1} and~\ref{lem:PD2} imply an algebra isomorphism $u_{q^{-1}}^-=\mathfrak{B}(V')\overset{\cong}\to \mathfrak{B}(V)$, $F_\alpha\mapsto X_\alpha$.  It follows that the $X_\alpha$ satisfy the same relations as the $F_\alpha$ in $u_{q^{-1}}^-$.  In particular $X_{\mu}^l=0$ for all $\mu\in \Phi^+$.  Whence we have a surjection $\U^{DK}/(X^l_\mu)\to u_q^\ast$.  Agreement of dimensions implies that the surjection is an isomorphism.
\end{proof}

Following our notation for $u^\ast_q$, we write $u^-_B$ for the subalgebra of the cocycle twist $(u^\ast_q)_B$ generated by the $X_\alpha$. 

\begin{corollary}
For any bilinear form $B\in \C[G]\ot\C[G]$ we have a minimal presentations
\[
u^-_B=\frac{\C\langle \bar{X}_\alpha:\alpha\in \Gamma\rangle}{(\mathrm{relations}\ \eqref{eq:Bserre},\ \bar{X}_\mu^l:\ \mu\in \Phi^+)}.
\]
\end{corollary}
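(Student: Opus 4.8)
The plan is to repeat the proof of Proposition~\ref{prop:presUB+}, now carrying along the extra power relations $\bar X_\mu^l$, with Lemma~\ref{lem:756} supplying the untwisted model. First note that by Lemma~\ref{lem:756} together with Proposition~\ref{prop:presUB+} applied with $B=1$ (which gives $\U^-=\C\langle\bar X_\alpha\rangle/(\text{$q^{-1}$-Serre})$ and $\U^{DK}=\U^-\rtimes\C[G^\vee]$), one has $u^-=\U^-/(\bar X_\mu^l:\mu\in\Phi^+)=\C\langle\bar X_\alpha\rangle/(\text{$q^{-1}$-Serre},\ \bar X_\mu^l:\mu\in\Phi^+)$. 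Let $R$ denote the algebra on the right-hand side of the claimed presentation, $Q$-graded by $\deg_Q(\bar X_\alpha)=-\alpha$; both families of defining relations, the $B$-Serre relations~\eqref{eq:Bserre} and the elements $\bar X_\mu^l$, are $Q$-homogeneous, so $R$ is a $Q$-graded algebra. Since the generators $X_\alpha\in u^-_B$ satisfy~\eqref{eq:Bserre} by Lemma~\ref{lem:relations}, and satisfy $X_\mu^l=0$ because $X_\mu^l=0$ already holds in $u^-$ and the two products differ only by a nonzero scalar fixed by the $Q$-degree, there is a $Q$-homogeneous surjection $R\twoheadrightarrow u^-_B$ sending $\bar X_\alpha\mapsto X_\alpha$. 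It remains to show the two algebras have equal dimension in every $Q$-degree.

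For the dimension count I would equip $R$ with the $\C[G\times G^{op}]$-module algebra structure $(g,h)\cdot\bar X_\alpha=g(\alpha^{-1})\bar X_\alpha$, as in the proof of Proposition~\ref{prop:presUB}; this is well defined because the relation spaces are $G$-homogeneous. With $\mathbb{B}\in\C[G\times G^{op}]^{\ot 2}$ the form corresponding to $B\ot B$, the twisted algebra $R(\mathbb{B}^{-1})$ then satisfies the $q^{-1}$-Serre relations (the $\mathbb{B}^{-1}$-twist rescales each monomial by a scalar depending only on its $Q$-degree, turning~\eqref{eq:Bserre} back into the ordinary quantum Serre relations) together with relations $\bar X_\mu^l=0$ (the same rescaling carries the element $\bar X_\mu^l$ to a nonzero multiple of the corresponding power in the untwisted $\bar X_\alpha$-algebra). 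Hence $R(\mathbb{B}^{-1})$ is a quotient of $\C\langle\bar X_\alpha\rangle/(\text{$q^{-1}$-Serre},\ \bar X_\mu^l)=u^-$, so $\dim R_\nu=\dim R(\mathbb{B}^{-1})_\nu\le\dim u^-_\nu=\dim(u^-_B)_\nu$ for every $\nu\in Q$, using that cocycle twisting preserves dimensions in each $Q$-degree. Combined with the surjection from the first paragraph, every such inequality is an equality and $R\to u^-_B$ is an isomorphism. Minimality of the presentation is then inherited from the (standard) minimality of the analogous presentation of $u^-$: the twist above matches the generator and relation spaces of the two presentations degree by degree, and a cocycle twist leaves unchanged the dimensions of $\mathrm{Tor}_\bullet(k,k)$.

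The only genuinely delicate point is the bookkeeping with the power relations: one must check that the braid-group monomial $\bar X_\mu$ entering $\bar X_\mu^l$ is $Q$-homogeneous of degree $-l\mu$, and that its value in each of $R$, $R(\mathbb{B}^{-1})$ and $u^-$ differs only by an invertible scalar, so that literally the same relations are being imposed in all three algebras and the displayed quotients agree on the nose. Everything else is a verbatim repetition of the dimension-count strategy already used for Propositions~\ref{prop:presUB} and~\ref{prop:presUB+}.
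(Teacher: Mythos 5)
Your overall strategy (surject from the abstractly presented algebra $R$, then get the reverse dimension inequality by twisting $R$ back to the untwisted model via the $G\times G^{op}$-action and $\mathbb{B}^{-1}$) is the same dimension-count template the paper uses for Proposition~\ref{prop:presUB}, and the paper's own proof of this corollary indeed proceeds by combining Proposition~\ref{prop:presUB+} with Lemma~\ref{lem:756}. So the route is close in spirit. However, there is a real flaw in the parenthetical justification at the heart of your dimension count.

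You assert that ``the $\mathbb{B}^{-1}$-twist rescales each monomial by a scalar depending only on its $Q$-degree.'' This is false, and in fact it is inconsistent with the conclusion you draw from it in the same sentence. If every monomial of a fixed $Q$-degree were rescaled by one common scalar, then an entire $Q$-homogeneous relation (such as~\eqref{eq:Bserre}, all of whose terms share the $Q$-degree $(1-\langle\alpha,\beta\rangle)\alpha+\beta$) would be rescaled by a single overall constant, and the coefficient \emph{ratios} among its terms would be unchanged. But the $B$-Serre relation and the ordinary $q^{-1}$-Serre relation have different coefficient ratios across the index $i$, so the twist could not possibly ``turn~\eqref{eq:Bserre} back into the ordinary quantum Serre relations'' under a degree-only rescaling. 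The rescaling in fact depends on the \emph{word}: writing $c(\mu,\nu)$ for the scalar by which $\mathbb{B}^{-1}$ acts on a pair of homogeneous elements of $Q$-degrees $\mu,\nu$, the monomial $\bar X_{\alpha_1}\cdots\bar X_{\alpha_m}$ picks up $\prod_{j<i}c(-\alpha_j,-\alpha_i)$, and for a nonsymmetric $B$ this depends on the ordering (compare $(\alpha,\beta)$ to $(\beta,\alpha)$). That order-dependence is exactly what cancels the order-dependent denominators $B(\alpha,\beta)^{1-\langle\alpha,\beta\rangle-i}B(\beta,\alpha)^i$ in~\eqref{eq:Bserre}.

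The same confusion infects the power relations, which you flag as ``the only genuinely delicate point'' but then leave unresolved. What you actually need is the weaker, correct statement: for a \emph{single} twisted multiplication of homogeneous elements the rescaling is a function only of the two input degrees, so iterating the $l$-th power of the fixed homogeneous element $X_\mu$ (of degree $-\mu$) produces the scalar $\prod_{k=1}^{l-1}B^{\pm 1}(k\mu,\mu)=B^{\pm 1}(\mu,\mu)^{l(l-1)/2}$, which equals $1$ because $B$ takes values in $l$-th roots of unity and $l$ is odd. This is precisely the content of the paper's one-line remark that Lemma~\ref{lem:496} gives $X_\mu^{\cdot_B l}=X_\mu^l$ in $\U^{DK}_B$, so the power relation is literally the same element in both multiplications and there is no bookkeeping issue at all. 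Your proof would be repaired by replacing the false ``degree-only'' claim with this single-multiplication statement and then invoking the identity $X_\mu^{\cdot_B l}=X_\mu^l$; as written, the argument does not establish that $R(\mathbb{B}^{-1})$ is a quotient of $u^-$, which is the load-bearing step of your dimension count. For minimality, the paper argues directly via centrality of the $X_\mu^l$ and Lusztig's basis of $\U^-_B$, whereas you propose transporting minimality through the twist; that is a workable alternative once the twist comparison is made precise, but it currently rests on the same unproven identification of the twisted and untwisted $\bar X_\mu$'s.
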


By a {\it minimal} presentation we mean that the given relations are linearly independent in the quotient $I/I^2$, where $I$ is the kernel of the projection from the free algebra $\C\langle\bar{\omega},\bar{X}_\alpha:\omega,\alpha\rangle \to (u_q^\ast)_B$.

\begin{proof}
One can use Lemma~\ref{lem:496} to check that $X_\mu^{\cdot_Bl}=X_\mu^l$ in $\U^{DK}_B$.  So there is no ambiguity in writing $X_\mu^l$ here.  The presentations now follow from the presentation of $\U^-_B$ given at Propositions~\ref{prop:presUB+}, and Lemma~\ref{lem:756}.  Minimality can be argued via centrality of the $X_\mu^l$ and by considering Lusztig's basis of $\U^-_B$~\cite{deconcinikac89,lusztig90II}.
\end{proof}

\subsection{Proof of Proposition~\ref{prop:HuB}}

As was mentioned in Section~\ref{sect:HuB}, the cohomology $H^\bt((u^\ast_q)_B)$ is $Q$-graded.  This was proposed to follow from the fact that the standard complex $C^\bt((u^\ast_q)_B)$ is $Q$-graded, which is true.  We can, however, also find this $Q$-grading by identifying the cohomology of $(u^\ast_q)_B$ with the derived $Q$-graded Hom functor on the category of $Q$-graded $(u_q^\ast)_B$-modules.

\begin{proof}[Proof of Proposition~\ref{prop:HuB}]
We proceed as in the proof of Theorem~\ref{prop:HUB}.  We want to first give the cohomology of $u^-_B$, then take the invariants to get $H^\bt(u^-_B)^{G^\vee}=H^\bt(u_B^\ast)$.
\par

We have the minimal $Q$-graded resolution
\[
\cdots\to (u^-_B\ot W)\oplus (u^-_B\ot V)\to u^-_B\ot \C\{\bar{X}_\alpha:\alpha\in\Gamma\}\to u^-_B\to 0
\]
of $k$ over $u^-_B$, where $V$ is spanned by the $B$-Serre relations and $W=\C\{\bar{X}_\mu^l:\mu\in \Phi^+\}$. Note that $W$ lay in $Q$-degrees $-l\Phi^+$.  (See~\cite[Sect. 2]{atv07}.)
\par

Hence 
\[
H^1(u^-_B)=\C\{\bar{X}_\alpha:\alpha\in\Gamma\}^\ast,\ \ \mathrm{and}\ \ H^2(u^-_B)=W^\ast\oplus V^\ast.
\]
As we saw in the proof of Proposition~\ref{prop:HUB}, $H^1(u^-_B)^{G^\vee}=0$ and $(V^\ast)^{G^\vee}=0$.  The subspace $W^\ast$ is $G^\vee$-invariant, since each $\omega\in G^\vee$ acts trivially on elements of $Q$-degrees $lQ$.  This gives
\[
H^1\left((u_q^\ast)_B\right)=H^1(u^-_B)^{G^\vee}=0\ \ \mathrm{and}\ \ H^2\left((u_q^\ast)_B\right)=H^2(u^-_B)^{G^\vee}=W^\ast.
\]
We identify $\n^{(l)}$ with $W^\ast$ by taking each $e_\mu$ to the function $\bar{X}_\nu^l\mapsto \delta_{\mu,\nu}$.
\end{proof}

\bibliographystyle{abbrv}
%\bibliography{uqb}

\end{document}